\newtheorem{theorem}{Theorem}
\newtheorem{proposition}[theorem]{Proposition}%
\newtheorem{corollary}[theorem]{Corollary}%
\newtheorem{lemma}[theorem]{Lemma}%
\newtheorem{remark}{Remark}%
\newtheorem{proof}{Proof}%
\newtheorem{definition}{Definition}%
\newcommand{\bmeta}{\bm{\eta}}
\newcommand{\bmmu}{\bm{\mu}}
\newcommand{\bm}[1]{\mbox{\boldmath$#1$}}
\newcommand{\bmV}{\bm{V}}
\newcommand{\bmb}{\bm{b}}
\newcommand{\bma}{\bm{a}}
\newcommand{\bmu}{\bm{u}}
\newcommand{\bmv}{\bm{v}}
\newcommand{\bmw}{\bm{w}}
\newcommand{\bmr}{\bm{r}}
\newcommand{\bmq}{\bm{q}}
\newcommand{\bmzero}{\bm{0}}
\newcommand{\bphi}{\bm{\phi}}
\newcommand{\bpsi}{\bm{\psi}}
\newcommand{\bxi}{\bm{\xi}}
\newcommand{\lng}{\left\langle  \right.}
\newcommand{\rng}{\left.  \right\rangle}
\newcommand{\Real}{\mathop{\mathrm{Re}}\nolimits}
\newcommand{\Imag}{\mathop{\mathrm{Im}}\nolimits}
	\title{Arbitrarily weak head-on collision can induce annihilation \\--The role of hidden instabilities--}
	\author{Yasumasa Nishiura\thanks{Research Center of Mathematics for Social Creativity,
			Research Institute for Electronic Science, Hokkaido University}
			\thanks{Chubu University Academy of Emerging Sciences, Chubu University}, 
			Takashi Teramoto\thanks{Faculty of Data Science, Kyoto Women's University}, 
			Kei-Ichi Ueda\thanks{Faculty of Science, Academic Assembly, University of Toyama}}
	\date{}
\begin{document}
	\maketitle
\begin{abstract}
		In this paper, we focus on annihilation dynamics for the head-on collision of traveling patterns. A representative and well-known example of annihilation is the one observed for 1-dimensional traveling pulses of the FitzHugh-Nagumo equations. In this paper, we present a new and completely different type of annihilation arising in a class of three-component reaction diffusion system. It is even counterintuitive in the sense that the two traveling spots or pulses come together very slowly but do not merge, keeping some separation, and then they start to repel each other for a certain time. Finally, up and down oscillatory instability emerges and grows enough for patterns to become extinct eventually (see Figs. 1-3). There is a kind of hidden instability embedded in the traveling patterns, which causes the above annihilation dynamics. The hidden instability here turns out to be a codimension 2 singularity consisting of drift and Hopf (DH) instabilities, and there is a parameter regime emanating from the codimension 2 point in which a new type of annihilation is observed. The above scenario can be proved analytically up to the onset of annihilation by reducing it to a finite-dimensional system. Transition from preservation to annihilation is also discussed in this framework.
\end{abstract}
	
	
	\section{Introduction}
	A variety of spatially localized structures emerge in many 
	fields, such as gas discharge systems 
	\cite{astrov-purwins,bode-liehr,purwins-astrov,schenk-orguil}, binary convection \cite{kolodner_1991,watanabe_jfm}, CO oxidation \cite{bar-eiswirth,zimmermann-firle}, desertification \cite{vegetation_rietkerk_2001,vegetation_tlide_2015}, oscillon \cite{swinney_oscillon}, morphogenesis \cite{yochelis_2008,champ_2014,cell_2015},
	and chemical reactions\cite{kepper-perraud,lee-mccormick,epstein_chaos_review}. They may be stationary, time-periodic, or traveling patterns as coherent objects.
	It is noteworthy that, unlike elastic balls, those localized patterns can behave like a living cell, i.e., replication, coalescence, and annihilation according to environmental changes and external perturbations, such as collisions with other patterns or inhomogeneities in the media \cite{arge2,nishiura-teramoto1,nishiura-teramoto2,teramoto-ueda-nishiura,nishiura-chaos15}. 
	In other words, they are stable when isolated, but the hidden instabilities come out via interactions with the external world. To elucidate the mathematical mechanism behind these dynamics is a difficult task because they contain large deformations and strong nonlinear interactions. Nevertheless, by numerical exploration, we can decompose the process into the sequence of deformations forming a network of saddle solutions with decreasing Morse index, called the {\it scattor-network} developed by 
	\cite{nishiura-teramoto1,nishiura-teramoto2}, which gives us a good perspective of complex transient dynamics. For instance, suppose that two traveling spots collide at some point; then, whether they merge or not is controlled by a saddle type of stationary solution with a peanut shape called the {\it scattor}, in the sense that the solution orbit can be sorted out by the stable manifold of this scattor. See \cite{nishiura-teramoto1,nishiura-teramoto2,nishiura-chaos15} for details and other examples of scattor-network.
	
	However, it is possible to construct a rigorous method when we focus on a {\it weak} interaction among localized patterns, for instance, Ei et al. \cite{ei,ei-mimura-nagayama} proposed a center manifold reduction method, in which the dynamics of pulse solutions can be described by low-dimensional ODEs as far as
	the solutions are well-separated and interact weakly (see also \cite{doel-2003,ohta,scheel-wright,wright-2010}). 
	
	These two complementary approaches have enabled us to clarify the onset mechanism of the complex dynamics of localized patterns, such as self-replication, self-destruction, and oscillation of front and pulse solutions in one dimension \cite{nishiura-ueyama1,nishiura-ueyama2,ei-nishiura-ueda,kolokolnikov,kolokolnikov2,ei2}, pulse generators \cite{nishiura-teramoto8},
	and rotational motion of spots in two dimensions \cite{nishiura-teramoto6} and others (see references therein). 
	
	In this paper, we are particularly interested in annihilation dynamics, which is a typical phenomenon when two localized patterns collide strongly, as is well-known for the FitzHugh-Nagumo (FHN) pulses and 
	BZ patterns \cite{arge,arge2,keener-sneyd}, though the rigorous
	verification still remains open. An exceptional case is a class of scalar reaction diffusion equation in which one can control the large deformation of front waves, as shown in \cite{fukao-morita-2004,yagisita-2003}. The key ingredient here is an argument of comparison type; however, it cannot be applied to the systems discussed here.

	The manner of annihilation of FHN pulses is intuitively clear because of the concentration of inhibitor contaminates around the colliding point, which leads to the collapse of the peak of the activator. We call this {\it annihilation of fusion type} (AF) for later use because the colliding patterns merge into one body before extinction. Annihilation of AF type can be understood by using the framework scattor-network mentioned above for traveling pulses of the Gray-Scott model and traveling spots of the three-component systems (\ref{eqn:gd}) and (\ref{gs3-pde}) (see \cite{nishiura-teramoto1,nishiura-teramoto2,nishiura-chaos15} for details).

	However, this is not a unique way that annihilation occurs at collision; in fact, a complete different type of annihilation of traveling spots has been found for the following three-component system of the FitzHugh-Nagumo type (\ref{eqn:gd}), as shown in Fig.\ref{fig:fhn3}.
	
	\begin{equation}\label{eqn:gd}
		\left\{
		\begin{array}{ll}
			u_t =& \displaystyle{ D_u \triangle u  +k_2 u - u^3 -k_3 v - k_4 w + k_1 },
			\vspace*{2mm} \\
			\tau v_t = & \displaystyle{ D_v  \triangle v + u - \gamma v},
			\vspace*{2mm} \\
			\theta w_t = & \displaystyle{ D_w  \triangle w + u -w },
		\end{array}
		\right.
	\end{equation}
	where $k_{i}, \tau, \theta, \gamma$ as well as the diffusion coefficients $D_u, D_v, D_w$ are all positive constants. The system (\ref{eqn:gd}) was introduced by Purwins et al.  \cite{astrov-purwins,bode-liehr,purwins-astrov,schenk-orguil} as a qualitative model of gas-discharge phenomena, and many interesting dynamics of localized patterns have been demonstrated both in numerical and analytical approaches \cite{nishiura-teramoto1,nishiura-teramoto2,nishiura-teramoto3,nishiura-teramoto4,nishiura-teramoto5,nishiura-teramoto7,nishiura-teramoto8}, especially in the excitable regime. 

	\begin{figure}[htbp]
			\centering
		\includegraphics[width=11cm,clip]{./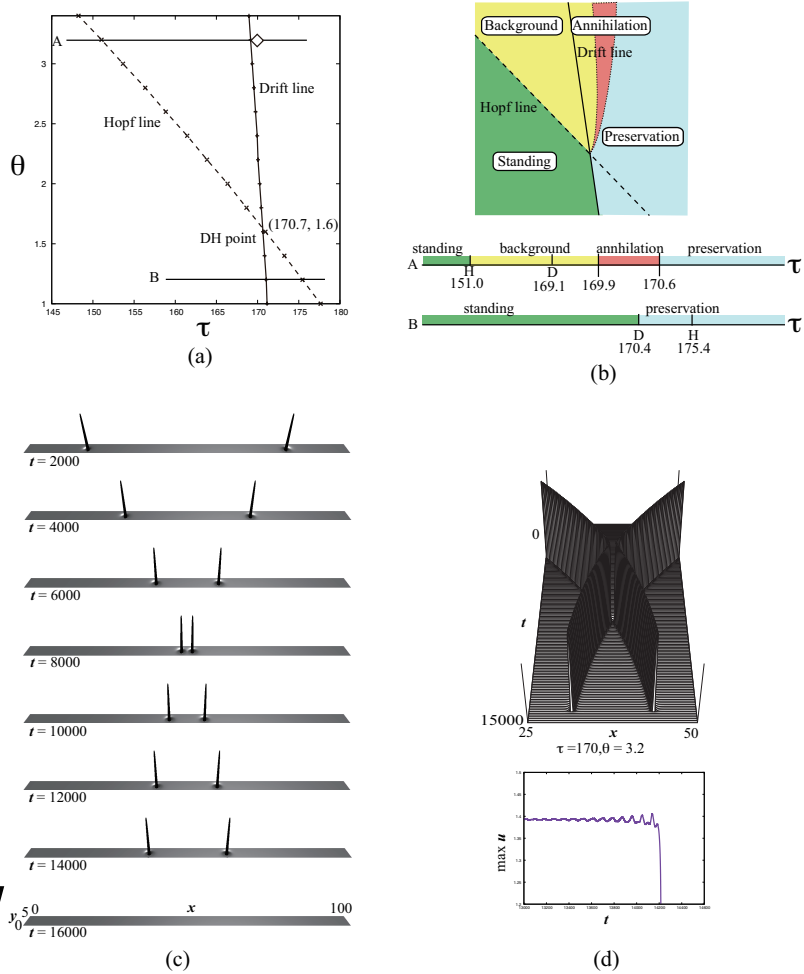}
		\caption{ (a) (b) Phase diagram for the outputs of two colliding traveling spots in \eqref{eqn:gd} with respect to $(\tau, \theta)$ and its schematic. The broken (respectively, dotted) line indicates Hopf (respectively, drift) bifurcation for the standing spot. A codimension 2 point of DH type appears at the intersecting point $(\tau, \theta) = (170, 3.2)$. Along the two lines A and B, the outputs of colliding spots are shown as $\tau$ is increased. The annihilation region (red) is inflated for clarity. 
			(c) Annihilation behavior at the diamond (white) point in the phase diagram. 
			Only profiles of the $u$-component are shown. They interact weakly and rebound while keeping their shape; however, they start to oscillate and disappear eventually.
			(d) A magnified birds-eye view plot near annihilation point. Up and down oscillation grows just before the annihilation.
			The system size is $100 \times 5$ with the Neumann boundary condition. The
			grid sizes are $\Delta x = \Delta y = 0.05$, $\Delta t=0.10$.
		} 
		\label{fig:fhn3}
	\end{figure}  

	Apparently, it makes a sharp contrast with the fusion case. Firstly, the velocity of the spot is quite slow 
	because the parameters are chosen close to the drift bifurcation point, as shown in Fig.\ref{fig:fhn3} (c)(d). Secondly, they almost keep the original shape even at the colliding point, though the distance between them at the nearest point is finite there. Thirdly, annihilation starts to occur in an oscillatory (up and down) manner after they rebound, and the extinction point is located far from the colliding point, as in Fig. \ref{fig:fhn3} (d).
	This looks very counterintuitive because the two spots eventually annihilate even though they look like they are interacting each other very weakly. We call this {\it weakly-interaction-induced-annihilation} (WIIA), which makes a sharp contrast with AF type.
	
	The above example is not an accidental numerical finding but rooted in a deeper structure, as will be clarified in this paper. In one word, it is a consequence of a hidden instability embedded in the localized pattern, which is triggered by the weak collision. Therefore, suppose that any other system shares the same mechanism; then, it shows similar dynamics. 
	In fact, the following three-component system of activator-substrate-inhibitor type presents us another good example of WIIA, as shown in Fig.\ref{fig:gs3}.
	
	\begin{equation}
		\left\{
		\begin{array}{ll}
			u_t  = & \displaystyle{ D_u \triangle u - \frac{uv^2}{1 + f_2 w} + f_0(1-u) }, \vspace*{2mm} \\ 
			v_t  = & \displaystyle{ D_v  \triangle v + \frac{uv^2}{1 + f_2 w} -(f_0 + f_1)v }, \vspace*{2mm} \\ 
			\tau w_t  = & \displaystyle{ D_w  \triangle w + f_3(v - w) },  
		\end{array}
		\right.
		\label{gs3-pde}
	\end{equation}
	\noindent
	where $f_0, f_1, f_2 $, and $f_3$ are all positive parameters related to
	inflow and removal rates of $u$ and $v$ and inhibition and decay 
	rates for the inhibitor $w$. The parameter $\tau$ controls the relaxation time for the inhibitor $w$. The notation $\triangle$ is the 2-dimensional Laplace operator. One of the origins of this model stems from an activator-substrate-inhibitor model developed by Hans Meinhardt \cite{meinhardt_shell} (see Chapter 5), who classified the shell patterns in the framework of reaction diffusion systems. Note that if the inhibitor $w$ is absent, then the system (\ref{gs3-pde}) is reduced to the Gray-Scott model.

	\begin{figure}[htbp]
			\centering
		\includegraphics[width=13cm,clip]{./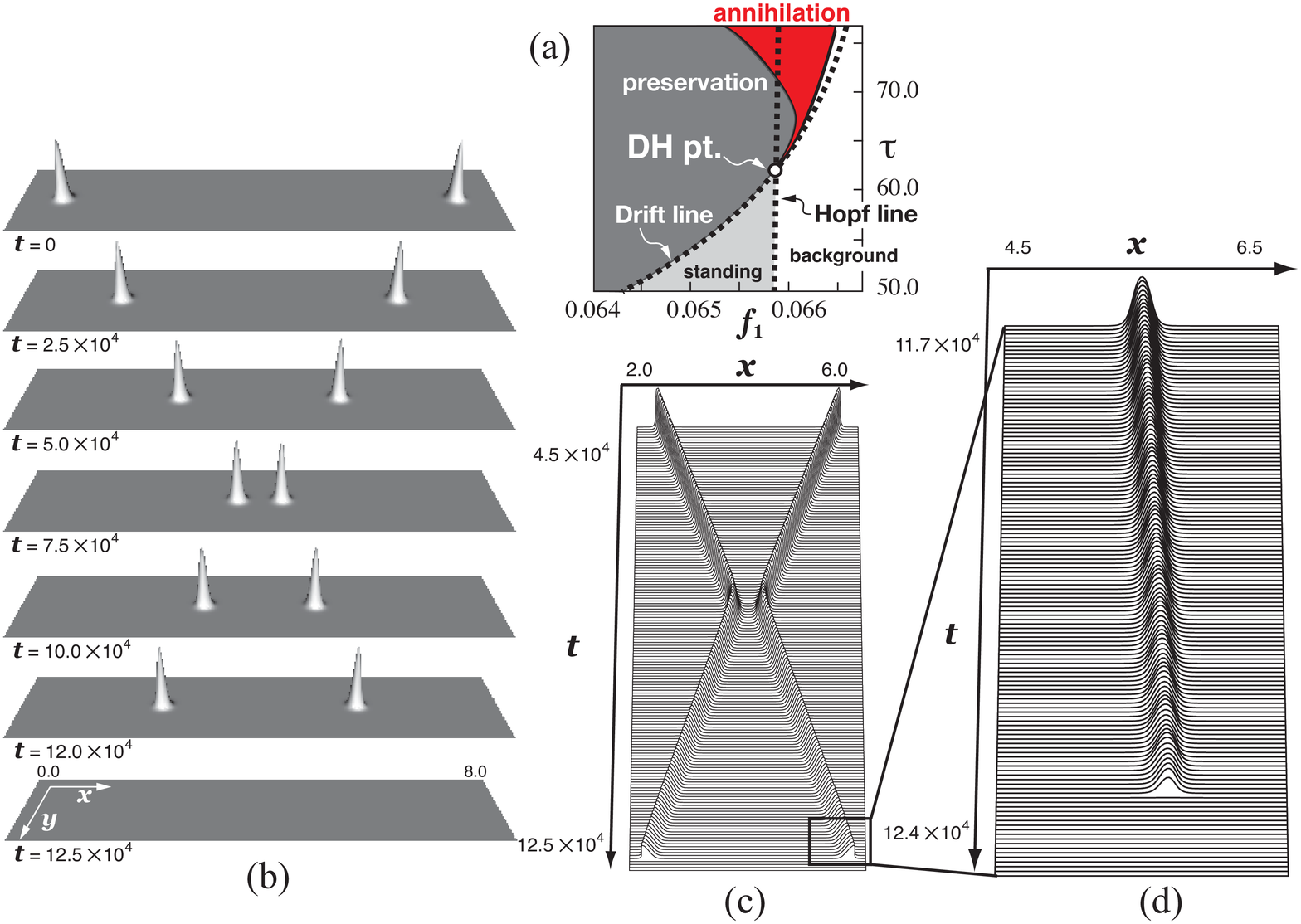}
		\caption{ (a) Phase diagram for the outputs of two colliding traveling spots in \eqref{gs3-pde}
			with respect to $(f_1, \tau)$. The broken (respectively, dotted) line indicates Hopf (respectively, drift) bifurcation for the standing spot. 
			(b) Annihilation behavior near the codimension 2 point of DH type: two slow traveling 
			spots collide and annihilate while keeping some distance. The associated time evolution of the 
			cross section along the $x$-axis is shown in (c). 
			(d) A magnified picture of the oscillatory behavior just before annihilation. 
			Only profiles of the $v$-component are shown. 
			The parameters are set to $(D_u, D_v, D_w) = (2.0 \times 10^{-4}, 1.0 \times 10^{-4}, 
			5.0 \times 10^{-4})$, and $(f_0, f_2, f_3) = (0.05, 0.50, 0.20)$. 
			The system size is $8 \times 2$ with the Neumann boundary condition. The
			grid sizes are $\Delta x = \Delta y = 2^{-6}$, $\Delta t=0.10$.
		} 
		\label{fig:gs3}
	\end{figure}  
	The third component $w$ of the above two systems is crucial to sustain stable traveling patterns in higher-dimensional space and guarantees the coexistence of multiple stable localized traveling patterns in collision dynamics. In fact, the third component $w$ suppresses the elongation and shrinkage orthogonal to traveling direction and keeps them stable as localized patterns. Another advantage of the three-component system is to allow us to control the instabilities and their parametric dependency easily and detect the singularities of high codimension efficiently, as illustrated for pulse generators in \cite{nishiura-teramoto8} (i.e., double-homoclinicity of butterfly type).

	Our goal is to clarify the mathematical mechanism causing the WIIA from a dynamical system point of view, especially in the framework of hidden instabilities triggered by weak interaction. 
	To accomplish the goal, we first have to understand how the traveling pattern can be pushed out of its basin by a weak external perturbation and what type of solutions form a basin boundary of it, which is one of the aforementioned scattors in collision dynamics.
	By closely looking at the behaviors of Figs.\ref{fig:fhn3} and \ref{fig:gs3}, the up and down oscillation dominated the dynamics just before extinction in addition to the fact that the velocity is very slow; therefore, it is expected that a combination of drift and Hopf instabilities might play a crucial role behind the scenes. In fact, this is the case, and a set of unstable traveling breathers constitute the boundary of basin of traveling spots, and the size of its basin can be controlled by how close the parameters are to a codimension 2 point of {\it{drift-Hopf}} (DH) singularity. We will see in Section 3 that the basin size shrinks to zero as appropriate parameters tend to the DH point, which allows a very weak trigger to incur annihilation. 
	
	As already observed, the minimum distance point is not the place where annihilation occurs but is a turning point for the orbit to shift its dynamic regime from stable traveling spot to breathing motion.
	
	The annihilation regime of WIIA type (indicated in Fig. 1 (a) and Fig. 2 (a)) can be obtained as
	one of the unfolding subsets of codimension 2 point of DH type; hence, such a codimension 2 point deserves to be called an organizing center producing a variety of dynamics including annihilation, though it is hidden when the pattern is isolated and only emerges through the interaction with other patterns or external perturbation.
	
	Note that the codimension 2 point is not a generic situation; however, the unfolding subset, in which we observe WIIA robustly, is an open set emanating from the singularity, which will be explained in detail in the subsequent sections. In other words, WIIA is not a rare event, though it looks counterintuitive.

	We will focus on the 1-dimensional case of (\ref{eqn:gd}) and justify the above scenario by reducing a PDE to finite-dimensional ODE dynamics near codimension 2 point of DH type. Such a reduction captures the essence of annihilation dynamics common to both 1-dimensional and 2-dimensional cases. A useful fact about the 1-dimensional case is that we have already many rigorous results of existence and stability properties \cite{doel-2009,heij-2008,nishiura-suzuki-2021}. Moreover, the information on eigenvalues and eigenfunctions around the codimension 2 singularity are available, which is crucial to obtain the reduced ODE system, as described later. It is of course possible to extend the method to higher-dimensional cases, but we leave this for the future work.

	The remainder of the paper is structured as follows.
	In Section 2, after introducing the 1-dimensional model system, we briefly explain how weak interaction induces annihilation by using the reduced ODEs. It turns out that the size of basin boundary of traveling pulse shrinks as parameters approach the DH singularity that causes annihilation of WIIA type. The existence of DH singularity is confirmed numerically in Section 2.3. In Section 3, we derive a finite-dimensional system around the DH singularity and show how the transition from preservation to annihilation occurs as parameters vary. We also compare the resulting dynamics with that of PDEs. Conclusions and outlooks are presented in Section 4.\\
	
	\section{1-dimensinal model system and main results}
	\subsection{Three-component FHN system in 1-dimensional space}
	To gain deeper understanding of the qualitative description in Section 1, especially to understand the role of the unstable breather, we need to reduce the PDE dynamics to the finite-dimensional one and analyze the detailed mechanism behind it. 
	We accomplish this task by using the three-component FHN system in 1-dimensional space (\ref{eqn:gd1}), which completely inherits essential features of annihilation of WIIA type described in the previous section.
	
	The reason for employing the model system (\ref{eqn:gd1}) is firstly that it captures the essence of annihilation dynamics common to both 1-dimensional and 2-dimensional cases. Secondly, we also have many rigorous results of existence and stability properties \cite{doel-2009,heij-2008,ikeda-akama-2017,heij-nishiura-2018,teramoto-heij-2021,nishiura-suzuki-2021} for the 1-dimensional case. Finally, the information on eigenvalues and eigenfunctions around the codimension 2 singularity are computable, which is crucial to obtain the reduced ODE system, as described later. 
	
	The three-component system of the FHN type in 1-dimensional space reads as
	
	\begin{equation}\label{eqn:gd1}
		\left\{
		\begin{array}{ll}
			u_t &=D_u u_{xx} +k_2 u - u^3 -k_3 v - k_4 w + k_1, \\
			\tau v_t &=D_v v_{xx} + u - \gamma v, \\
			\theta w_t &= D_w w_{xx} + u -w.
		\end{array}
		\right.
	\end{equation}
	\noindent
	An annihilation of WIIA type as depicted in Fig. \ref{fig:birdview-codim3} was originally presented in \cite{ueda-teramoto-nishiura-2005} for (\ref{eqn:gd1}), in which the outline of the reduction to ODEs was also presented. In what follows, we shall show a more detailed and complete version of the reduction method, followed by precise analysis of the basin boundary between preservation and annihilation for the reduced system (\ref{eqn:ode-2pulse-p}). This clarifies that the stable manifold of UTB forms a basin boundary of STP and how it behaves as parameters vary around the DH singularity depicted in Fig.\ref{fig:pd-ode} (a).

	\begin{figure}[htbp]
		\centering
		\includegraphics[width=9cm,clip]{./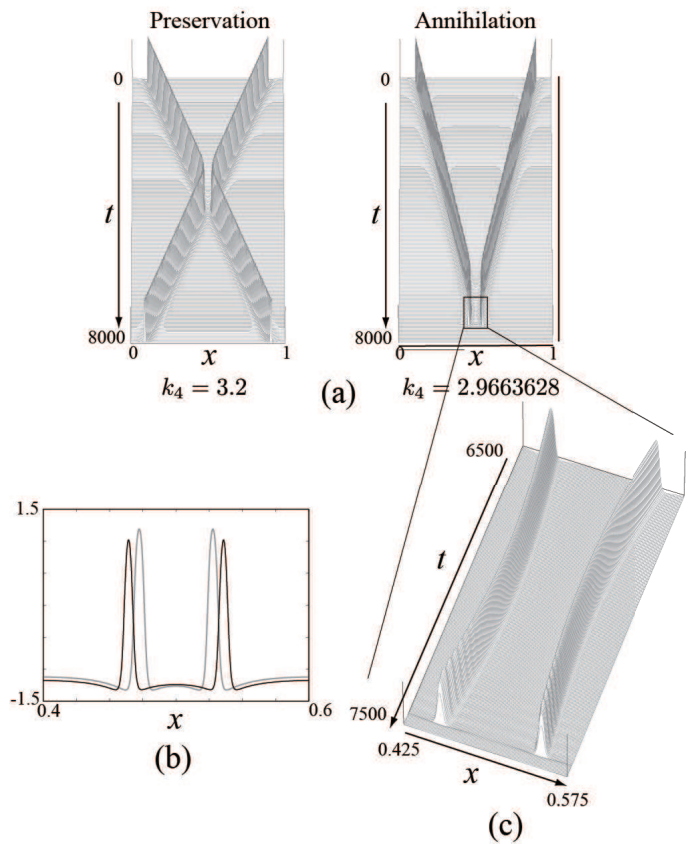}
		\caption{(a) Transition from preservation to annihilation for $\tau=1350$.    Preservation: $k_4=3.2$ (left). Annihilation: $k_4=2.9663628$ (right). (b) Snapshots of the solution profiles corresponding to (a), 
			when the distance between two pulses attains the minimum value for $k_4=2.9663628$ (solid line) and for $k_4=3.2$ (gray line).  
			(c) Magnification near the annihilation point of (a) ($k_4=2.9663628,\tau=1350$). 
			After rebound, up and down motion grows rapidly just before extinction. 
		}
		\label{fig:birdview-codim3}
	\end{figure}
	\begin{figure}[htbp]
		\centering
		\includegraphics[width=12cm,clip]{./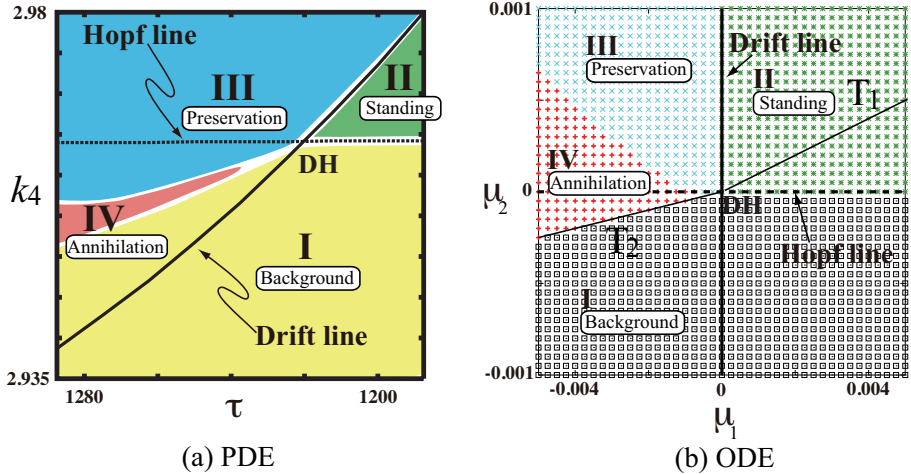}
		\caption{ (a) Phase diagram for the original PDE (\ref{eqn:gd}) with $D_v=0.00065$. 
			Solid and dashed lines indicate drift and Hopf bifurcation
			line, respectively. Stable traveling pulses exist 
			for regions III and IV. I: background state (no patterns). 
			II: standing pulse. III: preservation. IV: annihilation after collision. 
			DH is the codimension 2 point, i.e., drift and Hopf point: 
			$\tau\approx 1220, k_4\approx 2.965$. 
			(b) Phase diagram for the reduced ODE (\ref{eqn:ode-2pulse-p}). The parameters 
			are set to $p_{11}=1.0, p_{12}=1.0, p_{21}=0.05, p_{22}=0.1, M_2=1.0, M_3=1.0$.
			The vertical (horizontal) line $\mu_{1}= 0$ ($\mu_{2} = 0$) is a drift (Hopf) bifurcation from $\text{EP}_{0}$. 
			The two lines $T_{1}$ and $T_{2}$ denote the bifurcation of $\text{EP}_{3}^{\pm}$. 
		}
		\label{fig:pd-ode}
	\end{figure}
	Figure \ref{fig:pd-ode} (a) shows the phase diagram of the outputs for the colliding pulse solutions, in which we draw the two lines of drift and Hopf bifurcations for the single pulse solution. The intersecting point is the DH singularity located at $(k_4,\tau) \approx (2.965, 1220)$. The tongue of the annihilation regime almost touches the DH singularity, though it is not easy to detect it numerically. %
	A typical transition from preservation to annihilation was observed as shown in Fig.\ref{fig:birdview-codim3} (a), where $k_4$ is decreased from $3.2$ to $2.9663628$ with $\tau$ being fixed as $1350$. Note that this set of parameters is outside of the frame of Fig.\ref{fig:pd-ode} (a). This is simply because the annihilation process is difficult to see visually if the parameters are too close to DH singularity.
	Figure \ref{fig:birdview-codim3} (b) shows two profiles of pulses for annihilation (solid black) and preservation (gray) cases when the distance between two pulses takes the minimum value for each case. An interesting point is that the distance for preservation is smaller than for annihilation. 
	Figure \ref{fig:birdview-codim3}(c) shows a magnification near the extinction point. The two traveling pulses collide very weakly and change their directions after collision, then up and down oscillation becomes visible. Finally, the pulses disappear suddenly. 
	Meanwhile, the reduced ODEs show qualitatively the same phase diagram as in Fig.\ref{fig:pd-ode} (b), which is the main theme of the next section.

	At first sight, the manner of extinction in Fig. \ref{fig:birdview-codim3} looks counterintuitive, just like in Figs. \ref{fig:fhn3} and \ref{fig:gs3}. This is partly because it has been believed that only preservation can be observed when the pair of slowly moving pulses interact weakly through the tails; for instance, it was proved by weak perturbation method that this is the case for a special 2$\times$2 reaction diffusion system of excitable type \cite{ei-mimura-nagayama}.

	Historically, the classical 2$\times$2 reaction diffusion system and its variants serve as one of the basic mathematical models for studying pulse dynamics. The existence of traveling pulses via drift bifurcation was established, for instance, by \cite{ikeda-mimura-nishiura-1989,nishiura-mimura-ikeda-fujii-1990}.
	Layer oscillations via Hopf bifurcation and standing breathers emanated from standing pulses have been studied both analytically and numerically \cite{nishiura-mimura-1989,ikeda-nishiura-1994,ikeda-ikeda-2000}. 
	The global bifurcation picture, including the four different pulse behaviors (standing pulses, traveling pulses, standing breathers, and traveling breathers) were studied in the singular limit system \cite{ikeda-ikeda-mimura-2000,mimura-nagayama-ikeda-ikeda-2000,nishi-nishiura-teramoto-2013}. Stable traveling breathers were numerically detected in the narrow parameter region around the DH point \cite{nagayama-ueda-yadome-2010}. %
	
	These previous studies mainly focused on when and how the patterns of breathing type can be observed in various parameter settings, but this is only one side of the coin.
	Namely, they only focused on the half of the unfolding around DH singularity. In contrast to previous studies, we focus on the other half of the unfolding, i.e., super-critical drift and sub-critical Hopf bifurcations near DH point, and show that such a case can be realized for the system (\ref{eqn:gd1}). This unfolding makes a change of the stability properties of bifurcated patterns, i.e., traveling breathers emanated from unstable traveling pulse as a secondary bifurcation are unstable and the basic traveling pulse recovers its stability instead. 
	In the subsequent sections, we rigorously show that such unstable breathers exist and play a role of basin boundary for the annihilation dynamics of WIIA type in the framework of reduced ODEs near DH singularity. 
	
	\subsection{Summary of the main results}  
	It is instructive to present a brief summary of the main results for the reduced system.
	The details are presented in Section 3.
	
	The reduced system for the ``symmetric" head-on collision case describes the evolution of the distance between two pulses $h(t)$, the velocity $v(t)$ (the other one is $-v(t)$), and the amplitude of Hopf instability $A(t) \in \mathbb{R}$. In fact, the growth of $A(t)$ indicates the onset of annihilation.
	The principal part is given by the following system (see Section \ref{sec:ode}): 
	\begin{equation}\label{eqn:ode-2pulse-nojoyo}
		\left \{
		\begin{aligned}
			\dot{v} &= (-\mu_1-p_{11}v^2+p_{12}A^2)v
			-M_2s, \\
			\dot{A} &= (-\mu_2-p_{21}v^2+p_{22}A^2)A
			+M_3s, \\
			\dot{s} &= 2\alpha  (v+M_1s)s,
		\end{aligned}
		\right.
	\end{equation}
	where $\mu_1$ and  $\mu_2$ denote the deviations of parameter values  
	from the codimension 2 point of DH singularity, and $s(t)$ is defined by $s(t) = \exp(-\alpha h(t))$ for $\alpha > 0$. All the coefficients are positive, and $p_{12} p_{21}/ p_{11} p_{22} < 1$ is assumed to guarantee the super- (sub-)criticality of drift (Hopf) bifurcation.
	We also assume that $M_3/M_2> \max\{\sqrt{3p_{21}/{p_{22}}}, 2p_{21}\sqrt{p_{11}p_{12}}/(p_{11}p_{22}-p_{12}p_{21})\}$ (see (S6) in Section 3).
	
	There are six different equilibria for (\ref{eqn:ode-2pulse-nojoyo}) with $s=0$, $\widetilde{\mbox{EP}}_0, \widetilde{\mbox{EP}}_1,  \widetilde{\mbox{EP}}_2^\pm, \widetilde{\mbox{EP}}_3^\pm$, which correspond to standing pulse (SP), 
	standing breather (SB), traveling pulses (TPs), and traveling breathers (TBs) in the PDE sense, respectively. Note that $s=0$ means $h=+\infty$, namely, the above six equilibria represent the list of the ``single" pulse that depends on the parameters.
	Here, $\pm$ indicates the traveling direction. Figure \ref{fig:hb+hb_flow} presents the phase profiles projected to $s=0$ and the parametric dependency on $(\mu_1, \mu_2)$. The equilibrium points in Fig.\ref{fig:hb+hb_flow} without $\ \widetilde{}\ $ indicate that those are critical points restricted to $s=0$. 
	The vertical (horizontal) line $\mu_{1}= 0$ ($\mu_{2} = 0$) is a drift (Hopf) bifurcation from $\text{EP}_{0}$. 
	The two lines $T_{1}$ and $T_{2}$ denote the bifurcation curves of $\text{EP}_{3}^{\pm}$. 
	
	When we consider the symmetric collision, we focus on a special orbit 
	starting from $\widetilde{\mbox{EP}}^{+}_2: (v,A,s)=\left(\sqrt{-\frac{\mu_1}{p_{11}}},
	0, 0\right)$ at $t = -\infty$.
	We are interested in the fate of this orbit after collision. 
	When two pulses collide and rebound without annihilation, it is just a heteroclinic orbit connecting $\widetilde{\mbox{EP}}_2^{+}$ ($t=-\infty$) to $\widetilde{\mbox{EP}}_2^{-}$ ($t=+\infty$). Meanwhile, when annihilation occurs, the orbit starting from $\widetilde{\mbox{EP}}_2^{+}$ collides with the other pulse, and oscillatory instability grows, i.e., the amplitude of $A(t)$ becomes larger and eventually crashes to the background state. Therefore, it is natural to define the preservation and annihilation as follows.
	\begin{definition}
		When $(v,A, s)$ converges to $\widetilde{\mbox{EP}}_2^-$ as $t \rightarrow \infty$, the dynamics is called preservation. 
	\end{definition}
	\begin{definition}
		When $A$ diverges as $t$ is increased, the dynamics is called annihilation. 
	\end{definition}

	\begin{figure}[htbp]
		\centering
		\includegraphics[width=9cm,clip]{./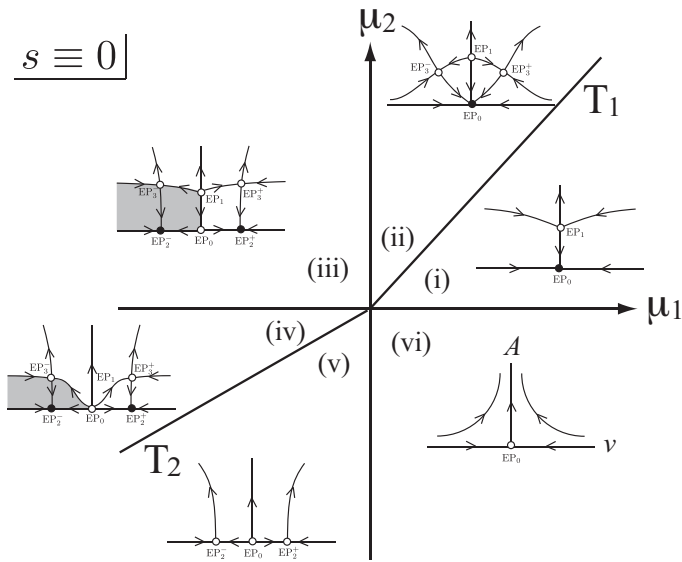}
		\caption{Phase portraits of \eqref{eqn:ode-2pulse-nojoyo} restricted to $s\equiv 0$ under the assumption (S5) in Sec \ref{sec:bifdiag}. 
			Filled (open) circles represent stable (unstable) equilibrium points. The vertical (horizontal) line $\mu_{1}= 0$ ($\mu_{2} = 0$) is a drift (Hopf) bifurcation from $\text{EP}_{0}$. 
			The two lines $T_{1}$ and $T_{2}$ denote the bifurcation curves from which $\text{EP}_{3}^{\pm}$ emanate. }
		\label{fig:hb+hb_flow}
	\end{figure}
	
	Note that the divergence of $A$ may occur at some finite $T$.
	The transition from preservation to annihilation occurs as $(\mu_1, \mu_2)$ varies the region vertically from (iii) to (v) for a fixed negative $\mu_1$ in Fig.\ref{fig:hb+hb_flow}.
	
	Here, we briefly explain how the orbit can be pushed out of the basin of stable traveling pulse and annihilate via weak perturbation. The phase portrait of the single pulse of Fig.\ref{fig:hb+hb_flow} helps us to understand it at intuitive level. The basin of stable traveling pulse $\mbox{EP}_{2}^{-}$ is a shaded region of Fig.\ref{fig:hb+hb_flow}. Suppose the two pulses collide and interact weakly through the tails and still remain in the shaded region; then, loosely speaking, it is a preservation.  As parameter $\mu_2$ is decreased and goes into region (iv), the basin starts to shrink and disappears in region (v). It is quite probable that the orbit can be kicked out of the basin in region (iv) and be extinct, which suggests that an arbitrary small external perturbation becomes a trigger for annihilation because the basin eventually disappears in region (v).

	Our main theorem can be summarized as follows without technical terms;
	see Section 3 for more detailed assumptions and precise statements.
	
	\begin{theorem}[Preservation, annihilation, and shrinkage of basin]
		The dynamics of symmetric head-on collisions is governed by (\ref{eqn:ode-2pulse-nojoyo}). 
		For any fixed sufficiently small $\mu_1 = -\mu_1^0 < 0$, 
		there exists a positive critical value $\mu_2^c > 0$, at which 
		the orbit of (\ref{eqn:ode-2pulse-nojoyo}) starting from $\widetilde{\mbox{EP}_2}^+$ reaches the scattor $\widetilde{\mbox{EP}_3}^-$ after collision. The outcome of an orbit starting from the above (below) $\mu_2^c > 0$ is preservation (annihilation). The stable manifold of $\widetilde{\mbox{EP}_3}^-$ is the separator between preservation and annihilation. The transition from preservation to annihilation occurs due to the shrinkage of the basin of $\widetilde{\mbox{EP}_2}^+$ as $\mu_2$ is decreased.
	\end{theorem}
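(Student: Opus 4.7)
The plan is to view the theorem as a statement about a 1-parameter family of orbits $\gamma_{\mu_2}$ in the 3-dimensional phase space of (\ref{eqn:ode-2pulse-nojoyo}), parameterized by $\mu_2$ with $\mu_1 = -\mu_1^0 < 0$ fixed and small, and to combine local normal-form estimates near the relevant saddles with a shooting / intermediate-value argument across this family. The key observation is that $\dot{s} = 2\alpha(v+M_1 s)s$ makes the plane $\{s=0\}$ invariant, with restricted dynamics exactly the 2D portrait of Figure \ref{fig:hb+hb_flow}, and the eigenvalue in the $s$-direction at any equilibrium is simply $2\alpha v$. Thus $\widetilde{\mbox{EP}_2}^+$ (with $v=\sqrt{\mu_1^0/p_{11}}>0$) carries a unique 1-dimensional unstable branch $\gamma_{\mu_2}$ into $\{s>0\}$, while $\widetilde{\mbox{EP}_3}^-$ (with $v<0$) is stable in the $s$-direction; combined with (S5)--(S6), which make $\widetilde{\mbox{EP}_3}^{\pm}$ saddle-type inside $\{s=0\}$ for the relevant parameter regime, the stable manifold $W^s(\widetilde{\mbox{EP}_3}^-)$ is a smooth 2-dimensional surface in $\mathbb{R}^{3}$ --- the natural candidate for the separatrix.

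I would then define $\gamma_{\mu_2}$ as the globally continued forward trajectory of this unstable branch, and establish two regimes. For $\mu_2$ well inside region (iii) of Figure \ref{fig:hb+hb_flow}, I would construct a forward-invariant trapping box around $\widetilde{\mbox{EP}_2}^-$ using the strong contraction from the $(v,A)$-linearization and a bound on the excursion in $s$, and show that $\gamma_{\mu_2}$ enters this box after a single rebound; this is preservation. For $\mu_2$ sufficiently small (positive but close to the boundary of region (iii)), I would exhibit an invariant cone of the form $\{|A|\ge c(|v|+s)\}$ in which the cubic self-coupling $p_{22}A^{3}$ dominates all other terms and drives $|A(t)|\to\infty$; (S6) is precisely what guarantees that the feedback $+M_3 s$ in the $\dot A$-equation pushes the orbit into this cone, giving annihilation.

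Next, a continuity argument produces $\mu_2^c$. The set $P$ of $\mu_2$ for which $\gamma_{\mu_2}$ converges to $\widetilde{\mbox{EP}_2}^-$ is open by hyperbolicity and continuous dependence on parameters; the set $A$ of $\mu_2$ for which $|A(t)|\to\infty$ along $\gamma_{\mu_2}$ is open because the invariant-cone condition is robust. Since both are nonempty by the previous step, there is $\mu_2^c\in\partial P\cap\partial A$, and at $\mu_2^c$ the orbit $\gamma_{\mu_2^c}$ is bounded but does not converge to $\widetilde{\mbox{EP}_2}^-$; the phase portrait in Figure \ref{fig:hb+hb_flow} together with nonexistence of periodic orbits in the relevant parameter region (checked by a Dulac function on the restricted plane and a perturbation argument in $s$) forces its $\omega$-limit set to be the saddle $\widetilde{\mbox{EP}_3}^-$, so $\gamma_{\mu_2^c}\subset W^s(\widetilde{\mbox{EP}_3}^-)$. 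A Melnikov-type computation --- differentiating the signed distance between $W^u(\widetilde{\mbox{EP}_2}^+)$ and $W^s(\widetilde{\mbox{EP}_3}^-)$ with respect to $\mu_2$ at $\mu_2^c$ and checking it is nonzero for small $\mu_1^0$ --- gives uniqueness of $\mu_2^c$ and the monotone crossing that makes $\widetilde{\mbox{EP}_3}^-$ the scattor; the basin-shrinkage statement then follows from the fact that as $\mu_2\downarrow\mu_2^c$ the sheet $W^s(\widetilde{\mbox{EP}_3}^-)$ approaches $W^u(\widetilde{\mbox{EP}_2}^+)$, collapsing the preservation set.

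The main obstacle I expect is the global control of $\gamma_{\mu_2}$ during the excursion in $s$. While $v>0$, the $s$-equation yields exponential growth driven by $v$, and the interaction terms $-M_2 s$ and $+M_3 s$ become order one, so the orbit leaves any small neighborhood of $\widetilde{\mbox{EP}_2}^+$ before the linear theory applies. Ensuring that $\gamma_{\mu_2}$ (i) subsequently reaches a neighborhood of $\widetilde{\mbox{EP}_3}^-$ at $\mu_2=\mu_2^c$, (ii) enters either the trapping box or the invariant cone on open parameter sets, and (iii) does so uniformly in $\mu_2$, requires a careful trapping-region construction balancing the competing signs of $M_2$ and $M_3$. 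Assumption (S6), $M_3/M_2>\max\{\sqrt{3p_{21}/p_{22}},\,2p_{21}\sqrt{p_{11}p_{12}}/(p_{11}p_{22}-p_{12}p_{21})\}$, is exactly what one needs to make this quantitative, and verifying it suffices --- including through the intermediate rebound where $v$ changes sign --- is the technically hardest step.
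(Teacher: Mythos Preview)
Your overall architecture---a one-parameter shooting family $\gamma_{\mu_2}$, open sets of preservation and annihilation parameters, and an intermediate-value argument yielding $\mu_2^c$ with $\gamma_{\mu_2^c}\subset W^s(\widetilde{\mbox{EP}}_3^-)$---matches the paper's strategy. The execution, however, differs in two substantive ways. For preservation, the paper does not build a trapping box near $\widetilde{\mbox{EP}}_2^-$ directly; instead it applies a center-manifold reduction at $\mu_1=0$ (with $\mu_2=\mu_2^0>0$ fixed) to slave $A$ to $(v,s)$ explicitly, obtaining $A=O(|\mu_1|)\ll A_{\mathrm{ep3}}$ and hence convergence to $\widetilde{\mbox{EP}}_2^-$. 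For annihilation, the paper's invariant set $X$ is not a cone $\{|A|\ge c(|v|+s)\}$ but a region in $\{v\le 0\}$ bounded below by a piecewise function $\widehat F(v)$ built from the nullclines $G(v),H(v)$ and a line of slope $a_\infty\in(\sqrt{p_{21}/p_{22}},\sqrt{p_{11}/p_{12}}]$; assumption (S6) enters precisely to make each face of $\partial X$ inflowing. Annihilation is then established not ``for $\mu_2$ small positive'' but at $\mu_2=\tilde\mu_2$ on the line $T_2$ (where $\tilde\mu_2<0$), via an auxiliary set $X_+$ in $\{v>0\}$ that funnels the unstable manifold of $\widetilde{\mbox{EP}}_2^+$ into $X$.

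Two of your ingredients are genuine overclaims relative to what the paper proves. First, the Dulac/periodic-orbit step is unnecessary: the paper's dichotomy is not ``bounded vs.\ unbounded'' but ``enters $X$ (hence $A\to\infty$) vs.\ stays below $\widehat F$ (hence converges to $\widetilde{\mbox{EP}}_2^-$)'', so no $\omega$-limit classification is required. Second, and more important, the Melnikov computation for uniqueness of $\mu_2^c$ is not carried out in the paper; the authors state explicitly in a remark following the detailed version of the theorem that uniqueness of $\mu_2^c$ is \emph{not known}, and the monotone-crossing conclusion you draw from it is left as a conditional statement. Your proposal therefore promises more than the paper delivers on this point.
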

	
	\subsection{Numerical verification of the existence of codimension 2 bifurcation point}
	In this section, we numerically investigate the existence of DH singularity for the system (\ref{eqn:gd1}) and study the bifurcation structure around it.
	
	\begin{itemize} 
		\item {\it{Parameter setting}}:
		we employ $k_4$ and $\tau$ as bifurcation parameters in the sequel, and
		all other coefficients are fixed as $D_u=5.0\times 10^{-6}$,
		$D_v=6.5\times 10^{-4}$, $D_w=7.5\times 10^{-4}$, $k_1=-3.0, k_2=2.0, k_3=2.0$, 
		$\gamma=8.0$, and $\theta=10.0$, unless otherwise mentioned. For numerical integration, we adopt $\Delta t=1.0\times 10^{-3}$ and $\Delta x =5.0\times 10^{-4}$, and the semi-implicit scheme is used for diffusion terms. 
	\end{itemize}
	
	\begin{figure}[htbp]
		\centering
		\includegraphics[width=11cm,clip]{./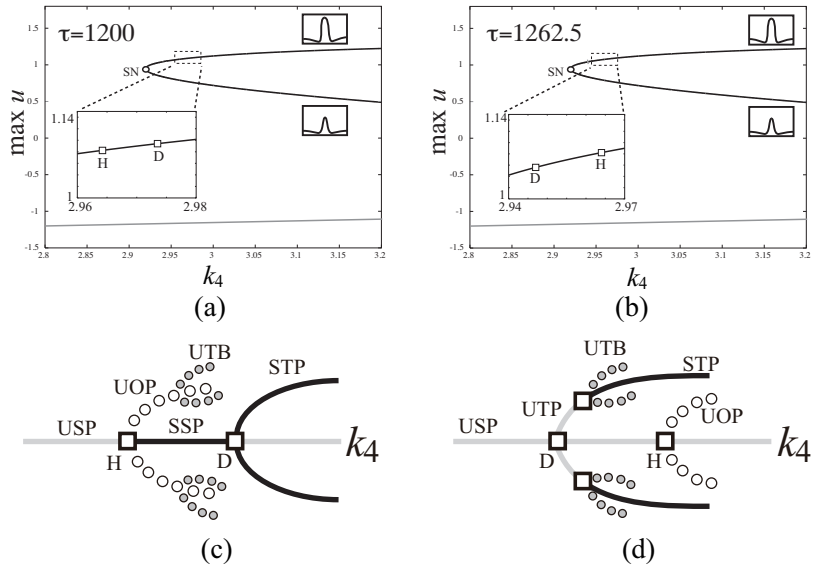}
		\caption{(a)(b) Black and gray lines indicate stationary standing pulse solution
			and uniform state for $\tau=1200$ and $\tau=1262.5$, respectively. 
			D, H, and SN denote drift, Hopf, and saddle-node bifurcation point, respectively. 
			(c)(d) Schematic figure of bifurcation diagram of 
			single pulse near DH point for $\tau=1200$ [(c)] and $\tau=1262.5$ [(d)]. 
			Black and gray lines indicate stable and unstable stationary 
			solutions. Circles indicate unstable periodic solutions.
			SSP: stable standing pulse. USP: unstable standing pulse. 
			STP: stable traveling pulse. UTP: unstable traveling pulse.
			UOP: unstable oscillatory pulse. UTB: unstable traveling breather. 
		}
		\label{fig:codim3-bifdiag-gam8.0}
	\end{figure}

	\begin{figure}[htbp]
		\centering
		\includegraphics[width=11cm,clip]{./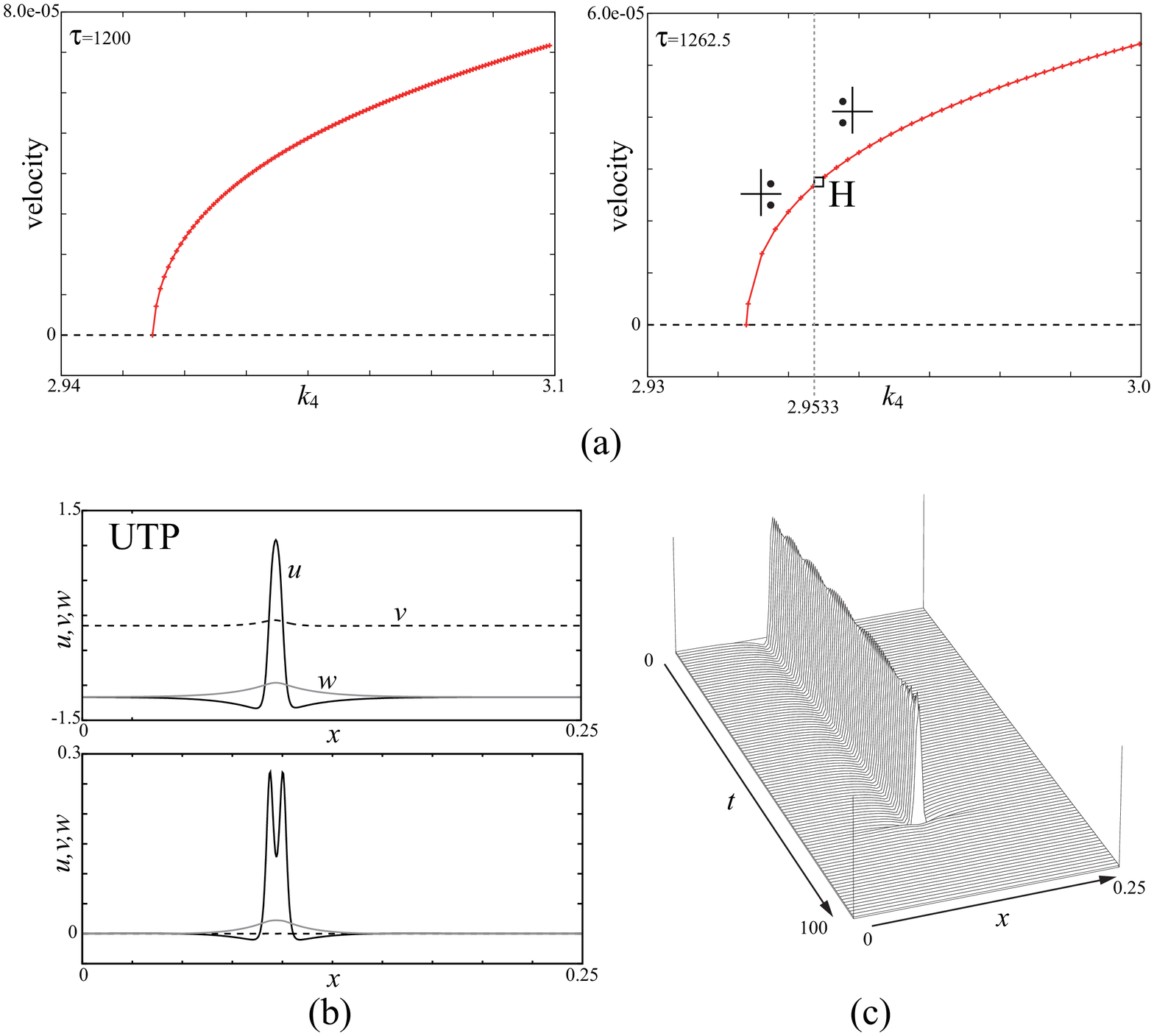}
		\caption{(a) Bifurcation diagram of traveling pulse solution for 
			$\tau=1200$ (left) and $\tau=1262.5$ (right).  The square indicates
			Hopf bifurcation $k_4\approx 2.954$. A pair of eigenvalues 
			crosses the imaginary axis from left to right as $k_4$ is
			decreased. 
			(b) Unstable traveling pulse solution for $k_4=2.9533$ and the 
			real part of the eigenfunction corresponding to the maximum eigenvalue; 
			$\lambda = 1.83\times 10^{-3} \pm 1.937 \times 10^{-1}i$. 
			(c) Responses of unstable traveling pulse to small perturbation of 
			the unstable eigenmode corresponding to the largest eigenvalue. The initial data are
			taken from the unstable branch of traveling pulse solution at $k_4 =
			2.9533$, left side of the Hopf point. Due to Hopf instability,
			oscillatory behavior is observed until extinction occurs. 
			These numerical experiments were done with
			$L=0.25$ with periodic boundary conditions.}
		\label{fig:eigenvec-ptb}
	\end{figure}

	\subsubsection{Global bifurcation diagram of single pulse}
	Figure \ref{fig:codim3-bifdiag-gam8.0} shows two global bifurcation diagrams of single standing pulse of (\ref{eqn:gd1}) for $\tau=1200$ (a) and $\tau=1262.5$ (b) as $k_4$ varies. The secondary bifurcations of drift and Hopf instabilities are depicted in the magnified inlets, and their order is changed as $\tau$ is increased. Therefore, the codimension 2 singularity of DH type occurs in between. A schematic diagram of whole branches including secondary ones for each case is depicted in (c) and (d), from which we can extract key information on the transition from preservation to annihilation. We first take a close look at the solution behaviors around the secondary Hopf bifurcation for $\tau=1262.5$. 
	Figure \ref{fig:eigenvec-ptb} (a) (left) shows a traveling pulse branch via drift instability at $k_4=2.973$ for $\tau=1200, \gamma=8.0,\theta= 10.0$. It is supercritical and inherits the stability of standing pulse. Note that the Hopf instability already occurred in the left side of it (outside of this frame), as shown in Fig.\ref{fig:codim3-bifdiag-gam8.0}(c). Meanwhile, the Hopf instability appears as a secondary bifurcation of the traveling pulse for $\tau=1262.5$ (right), which is responsible for the recovery of stability of traveling pulse. Namely, the bifurcated oscillatory traveling pulse is of subcritical type and unstable, as shown in Fig.\ref{fig:codim3-bifdiag-gam8.0}(d). Prior to the Hopf point, the traveling pulse branch is unstable (UTP), and the real part of the associated unstable eigenfunction has a hollow in the middle, as shown in Fig.\ref{fig:eigenvec-ptb} (b). This suggests that instability of UTP is a type of up and down 
	Oscillation, as shown in (c), and the final destination is the background state.
	
	\subsubsection{Scattor and basin boundary regarding extinction}
	The unstable manifold of UTP is inherited to unstable traveling breather (UTB) due to the subcriticality of Hopf bifurcation; hence, if the orbit would be pushed outside of the basin of stable traveling pulse, then it would follow the behavior of unstable manifold of UTB, i.e., extinction occurs. In fact, the size of the basin of stable traveling pulse shrinks as $k_4$ approaches the Hopf point; therefore, a tiny external perturbation becomes a trigger  of annihilation. We shall see the details of how this annihilation occurs in the reduced ODE system in Section \ref{sec:ode}. It turns out that UTB is a critical saddle solution emerging at the transition from preservation to annihilation, which is called a {\it scattor} in the study of collision dynamics \cite{nishiura-teramoto1,nishiura-teramoto2}. The unstable manifold of UTB plays a role of separator (basin boundary) between preservation and annihilation. 
	
	\section{Reduction to pulse-interaction-ODEs for scattering dynamics}
	\label{sec:ode}
	We describe the dynamics of widely-spaced localized patterns of reaction diffusion systems near DH point by reducing it to
	finite-dimensional ODEs in the general setting. 
	The method to derive ODEs is quite similar to that developed by
	\cite{ei,ei2,ei-mimura-nagayama,ei-mimura-nagayama2,ei-nishiura-ueda}.
	All the assumptions listed below can be checked numerically 
	for (\ref{eqn:gd1}).  
	In this paper, we only perform formal derivation of ODEs and rigorous proof for the construction of an invariant manifold, and the derivation of the flow on it can be done in a parallel manner to those of references mentioned above.
	
	\subsection{Settings and assumptions}
	Let us consider the following general form of an $N$-component
	reaction-diffusion system in 1-dimensional spaces
	\begin{equation}\label{eqn:rd}
		T\bmu_t = D\bmu_{xx} +{\cal F}(\bmu;k),
	\end{equation}
	where $\bmu=(u_1,u_2,\dotsc,u_N)\in {\cal X}:=\{L^2(\mathbb{R})\}^N$, $k\in
	\Bbb{R}$, 
	${\cal F}:{\cal X}\to {\cal X}$ is a smooth function, and
	$D$ and $T:=\{\tau_j\}_{j=1,2,\dots,N}$ are diagonal matrices with positive
	elements. Here, we take $k$ and one of $\tau_i$, e.g., $\tau_n$, as
	bifurcation parameters and put 
	$(k,\tau_n)=(\tilde k+\eta_1, \tilde \tau_n(1+\eta_2))$, 
	where $\eta_1$ and $\eta_2$ are small values. Later, we suppose that 
	$(k,\tau)=(\tilde k, \tilde \tau_n)$ is the DH point (see (S1)). 
	Then, (\ref{eqn:rd}) is rewritten as
	\begin{equation}\label{eqn:0}
		(I+E_{n,n}(\eta_2))\bmu_t={\cal L}(\bmu;\tilde{k}) + \eta_1 g(\bmu),
	\end{equation}
	where 
	${\cal L}(\bmu;\tilde{k})=T^{-1}[D\bmu_{xx} + {\cal F}(\bmu;\tilde{k})]$, 
	${\eta_1 g(\bmu)}={\cal L}(\bmu;\tilde k+\eta_1)-{\cal
		L}(\bmu;\tilde{k})$, $I$ is an
	$n\times n$ identity matrix,
	and $E_{n,n}(\eta)=(e_{ij})$ is an
	$n\times n$ matrix with $e_{nn}=\eta$ and $e_{ij}=0$ otherwise. 
	We suppose ${\cal F}(\bmzero; \tilde{k})=\bmzero$. 
	Let $L={\cal L}'(S(x);\tilde{k})$ be the linearized operator
	of (\ref{eqn:0}) with respect to $S(x)$
	and $\Sigma_c$ be the spectrum of $L$.
	
	The existence analysis of stationary pulse 
	solution in this category was studied in \cite{doel-2009,nishiura-suzuki-2021}. The codimension 2 singularity of DH type was rigorously 
	detected in \cite{heij-2008,nishiura-suzuki-2021}. Therefore, we assume the following properties: 
	\begin{itemize} 
		\item[(S1)]  There exists
		a spatially symmetric 
		stationary standing pulse solution $S(x)$ 
		and it has codimension 2 singularity of DH type at $(k,\tau_n)= (\tilde k,\tilde{\tau}_n)$.
	\end{itemize}
	It has been proven that the tail of the pulse decays exponentially
	\cite{doel-2009,heij-2008,nishiura-suzuki-2021}, and repulsive dynamics occurs through weak interactions (see (S4)). In fact, one of the components with the slowest 
	decay rate dominates the weak interaction dynamics, and hence, we focus on the profile of the slowest
	component. 
	\begin{itemize}
		\item[(S2)]
		$\Sigma_c$ consists of $\Sigma_0=\{0, \pm i\omega_0\} 
		(\omega_0\in \mathbb{R}^+)$ and 
		$\Sigma_1\subset\{z\in \mathbb{C};\Real z<-\gamma_0\}$ for 
		some positive constant $\gamma_0$, and 
		there exist three eigenfunctions $\phi(x)(=S_x(x))$, $\psi(x)$, and $\xi(x)$ such 
		that
		\begin{equation*}
			L\phi = 0, \quad L\psi = -\phi,\quad L\xi = i\omega_0\xi,\qquad
			(\omega_0\in \Bbb{R}),
		\end{equation*}
		where $i$ is the imaginary unit, $\xi$ is an even function, and $\phi$ and $\psi$ are odd functions (see Fig.\ref{fig:eigenvec}). 
		The asymptotic behaviors of the slowest decay component of $S(x)$ and $\text{Re}\ \!\xi(x)$ are given as follows:
		\begin{align*}
			&S(x) \to {\rm e}^{-\alpha\lvert x\rvert}\bma \quad \text{as}\ x\to\pm\infty, \\
			&\text{Re}\ \!\xi(x)\to{\rm e}^{-\alpha \lvert x\rvert}\bmb\quad \text{as}\ x\to\pm\infty,
		\end{align*}
		where $\alpha$ is a positive constant and $\bma, \bmb\in\mathbb{R}^N$.
	\end{itemize} 
	The profiles of $\phi$, $\psi$, and $\xi$ for (\ref{eqn:gd}) at codimension 2 
	bifurcation point DH
	are depicted in Fig.\ref{fig:eigenvec}.
	
	\begin{itemize}
		\item[(S3)]
		Let $L^*$ be the adjoint operator of $L$. There exist 
		$\phi^*$, $\psi^*$, and $\xi^*$ such that $L^*\phi^*=0$, 
		$L^*\psi^*=-\phi^*$, and $L^*\xi^*=-i\omega_0\xi^*$. 
		The asymptotic behaviors of the slowest decay component of $\phi^*(x)$ and $\xi^*$ are given as follows:
		\begin{align*}
			&\phi^*(x) \to \pm{\rm e}^{-\alpha\lvert x\rvert}\bma^* \quad \text{as}\ x\to\pm\infty, \\
			&\text{Re}\ \!\xi(x)^*\to{\rm e}^{-\alpha \lvert x\rvert}\bmb^*\quad \text{as}\ x\to\pm\infty,
		\end{align*}
		where $\bma^*, \bmb^*\in\mathbb{R}^N$.
	\end{itemize}
	
	Under the assumptions (S1)--(S3), we have the following proposition.
	\begin{proposition}\label{prop:eigenvec}
		The functions $\psi$, $\phi^*$, and $\psi^*$ are uniquely determined by
		the normalization 
		\begin{equation*}
			\langle \psi, \phi\rangle_{L^2} = 0,\ \langle \phi, \psi^*\rangle_{L^2} = 1,\
			\langle \psi,\ \psi^*\rangle_{L^2} = 0. 
		\end{equation*}
	\end{proposition}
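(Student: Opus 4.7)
My strategy is to parameterize the residual freedom in $(\psi,\phi^*,\psi^*)$ allowed by (S2)--(S3) and to show that the three normalization conditions become a triangular linear system in three scalars, whose invertibility rests on the non-degeneracy of the canonical bilinear pairing between the generalized kernels of $L$ and $L^*$.

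Fix once and for all particular solutions $\psi_0$, $\phi^*_0$, $\psi^*_0$ of $L\psi_0 = -\phi$, $L^*\phi^*_0 = 0$, and $L^*\psi^*_0 = -\phi^*_0$. Since (S2) places only $\phi$ in the kernel of $L$ and only $\phi^*_0$ in the kernel of $L^*$, every admissible triple satisfying the defining equations in (S2)--(S3) takes the form
\begin{equation*}
\psi = \psi_0 + a\phi, \qquad \phi^* = b\phi^*_0, \qquad \psi^* = b\psi^*_0 + c\phi^*_0,
\end{equation*}
for some $a,c \in \mathbb{R}$ and $b \in \mathbb{R}\setminus\{0\}$ (the form of $\psi^*$ comes from writing the general solution of $L^*\psi^* = -b\phi^*_0$ as a particular solution $b\psi^*_0$ plus an element of $\ker L^*$). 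Transposing $L$ against these Jordan relations yields two standard identities: $\langle \phi, \phi^*_0\rangle_{L^2} = 0$, obtained from $\langle L\phi, \psi^*_0\rangle_{L^2} = \langle \phi, L^*\psi^*_0\rangle_{L^2}$, and the symmetry $\langle \phi, \psi^*_0\rangle_{L^2} = \langle \psi_0, \phi^*_0\rangle_{L^2}$, obtained from $\langle L\psi_0, \psi^*_0\rangle_{L^2} = \langle \psi_0, L^*\psi^*_0\rangle_{L^2}$.

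The key non-degeneracy $p := \langle \phi, \psi^*_0\rangle_{L^2} \neq 0$ follows from (S2), which fixes the algebraic multiplicity of the eigenvalue $0$ at exactly two: were $p$ to vanish, $\phi$ would annihilate the entire generalized kernel $\mathrm{span}(\phi^*_0, \psi^*_0)$ of $L^*$, hence lie in $\mathrm{Range}(L^2)$ by Fredholm duality, which would extend the Jordan chain at $0$ to length at least three and contradict (S2). Granted $p \neq 0$, I substitute the parameterization into the three normalization conditions and solve in cascade. The first, $\langle \psi, \phi\rangle_{L^2} = \langle \psi_0,\phi\rangle_{L^2} + a\|\phi\|_{L^2}^2 = 0$, uniquely determines $a$. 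The second reduces via the identity above to $\langle \phi, \psi^*\rangle_{L^2} = b\,p = 1$, uniquely determining $b$. The third, $\langle \psi, \psi^*\rangle_{L^2} = 0$, expands to a linear equation in $c$ with leading coefficient $\langle \psi, \phi^*_0\rangle_{L^2} = \langle \psi_0, \phi^*_0\rangle_{L^2} = p \neq 0$, so $c$ is unique.

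The only genuine obstacle is the non-degeneracy step, since it invokes the precise Jordan-block size encoded in (S2) rather than a formal manipulation with the defining relations; in the present reaction-diffusion context this is a standard consequence of the Fredholm structure of $L$, and everything else in the proof is routine triangular linear algebra on a $2\times 2$ Jordan block.
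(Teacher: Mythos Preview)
Your proof is correct and follows the standard Jordan-block normalization argument: parameterize the residual freedom, use the duality identities to reduce to a triangular linear system, and invoke the non-degeneracy $\langle\phi,\psi^*_0\rangle\neq 0$ coming from the exact algebraic multiplicity of $0$ in (S2). The paper itself omits the proof entirely, referring to \cite{ei,ei-mimura-nagayama} where precisely this argument appears, so your write-up is a faithful self-contained version of the intended approach.
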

	Because the proof is quite similar to that 
	given in \cite{ei,ei-mimura-nagayama}, we omit it. 
	
	We take $\xi$ and $\xi^*$ satisfying 
	\begin{equation*}
		\langle \xi, \xi^*\rangle_{L^2} = 1.
	\end{equation*}
	We note that 
	\begin{equation*}
		\langle \psi, \phi^*\rangle_{L^2} = 1,\   
		\langle \phi, \phi^*\rangle_{L^2} = 0
	\end{equation*}
	are satisfied from Proposition \ref{prop:eigenvec}. 
	
	\begin{figure}[htbp]
		\centering
		\includegraphics[width=11cm,clip]{./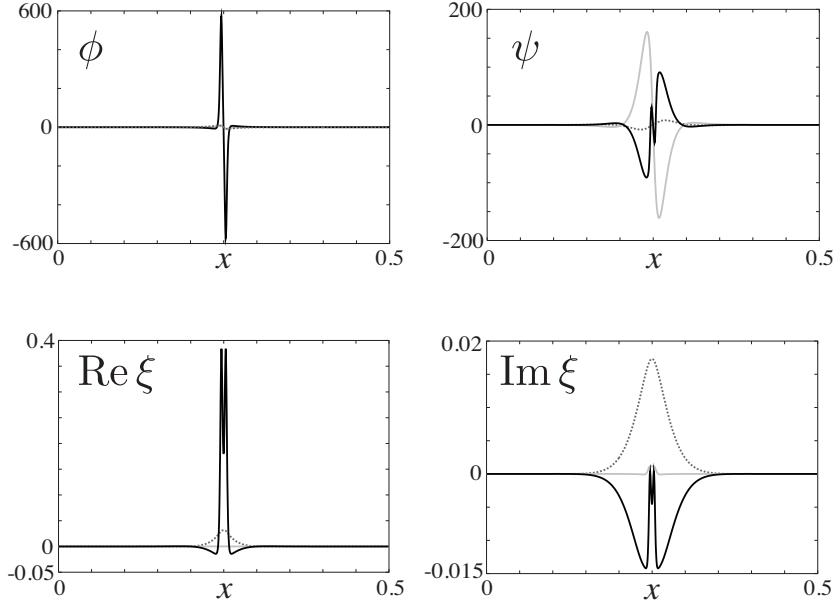}
		\caption{Profiles of $\phi$, $\psi$, and $\xi$. Black solid, gray solid, and dashed 
			lines indicate the $u$-, $v$-, and $w$-component, respectively.}
		\label{fig:eigenvec}
	\end{figure}
	
	\subsection{Derivation of reduced ODE system} 
	The derivation of the ODE system is based on the reduction method developed by Ei and his collaborators 
	\cite{ei,ei-mimura-nagayama,ei-mimura-nagayama2,ei-nishiura-ueda,ei2}, which takes the following two steps: (i) construction of an invariant manifold under the conditions that 
	the parameters are sufficiently close to the DH point and (ii) derivation of an ODE system describing the flows on it. 
	The existence of an invariant manifold for the codimension 2 point of drift-saddle type has been 
	proposed in \cite{ei2}, and the proof for our case, that is, the drift-Hopf type, can be done along the same lines as in the above references. Therefore, we omit the details of it and mainly focus on the derivation of
	the finite-dimensional ODE system on the invariant manifold. 
	
	We define ${\cal L}({\cal X};{\cal X})$ as the set of all bounded 
	operators from ${\cal X}$ to ${\cal X}$. Because ${\cal L}({\cal X}, {\cal L}({\cal X};{\cal X}))$ is 
	identified with ${\cal L}({\cal X}\times {\cal X}; {\cal X})$, we represent $(F'(S)\bmu)'\bmv
	\in {\cal L}({\cal X}; {\cal L}({\cal X};{\cal X}))$ $(\bmu, \bmv\in {\cal X})$ as 
	$F''(S)\bmu\cdot \bmv$, and $F''(S)\bmu\cdot\bmu$ as $F''(S)\bmu^2$ for
	simplicity. The third order derivatives $F'''(S)$ are similarly defined.
	
	We consider the dynamics of two weakly interacting pulses.
	Let $S(x;h)=S(x-h)+S(x)$, 
	$L(h)={\cal L}'(S(x;h))$, $L^*(h)$ be an adjoint operator of 
	$L(h)$, and $\delta(h)=\exp(-\alpha h)$, where $h$ is the 
	distance between two pulses.
	In a similar way to Propositions \ref{prop:1} and \ref{prop:2} of \cite{ei}, we have the 
	following propositions.
	\begin{proposition}\label{prop:1}
		There exist positive constants $C$ and $h^*$ 
		such that for $h>h^*$, the operator $L(h)$ has 
		four eigenvalues $\{\lambda_j(h)\}_{j=1,\dots,4}$ with
		$\lvert \lambda_j(h) \rvert \leq C\delta(h)$ ($j=1,\dots,4$), and 
		$L(h)-i\omega_0 I$  and $L(h)+i\omega_0 I$ have two eigenvalues each,
		$\lambda_5$ and $\lambda_{6}$, and $\lambda_7$ and $\lambda_{8}$, respectively,
		with $\lvert\lambda_j(h) \rvert\leq C\delta(h)$ ($j=5,\dots,8$). 
		Eigenvalues with multiplicities are repeated 
		as many times as their multiplicities indicate. 
		Other spectra of $L(h)$ are on the left-hand side 
		of $\text{Re} z=-\rho_0$ for a positive constant $\rho_0$.
	\end{proposition}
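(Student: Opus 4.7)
The proof proceeds by spectral perturbation, following the strategy of Ei~\cite{ei,ei-mimura-nagayama}. The underlying observation is that $L(h)={\cal L}'(S(x;h))$ admits the two representations
\[
L(h) = {\cal L}'(S(x-h)) + B_1(x,h) = {\cal L}'(S(x)) + B_2(x,h),
\]
where by smoothness of ${\cal F}$ the remainder $B_1$ vanishes wherever $S(x)$ does; hence by (S2) it is exponentially small away from $x=0$, and since the single-pulse modes of the right pulse are concentrated near $x=h$, one obtains $\|B_1\phi(\cdot-h)\|_{L^2} = O(\delta(h))$ and likewise for $\psi(\cdot-h)$, $\xi(\cdot-h)$, $\overline{\xi}(\cdot-h)$. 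Symmetric estimates hold for $B_2$ acting on translates centered at the origin.

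Building on this, I would work with the eight-dimensional subspace
\[
E(h) = \mathrm{span}\bigl\{\phi(\cdot-h),\ \psi(\cdot-h),\ \xi(\cdot-h),\ \overline{\xi}(\cdot-h),\ \phi(\cdot),\ \psi(\cdot),\ \xi(\cdot),\ \overline{\xi}(\cdot)\bigr\},
\]
together with the analogous adjoint space $E^*(h)$ built from $\phi^*$, $\psi^*$, $\xi^*$, $\overline{\xi^*}$ given by (S3). Using the biorthogonality relations stated after Proposition~\ref{prop:eigenvec}, define an oblique rank-$8$ projector $\Pi(h)$ with range $E(h)$ and kernel $(E^*(h))^{\perp}$. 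The remainder estimates above yield $\|\Pi(h)L(h)(I-\Pi(h))\| = O(\delta(h))$ and $\|(I-\Pi(h))L(h)\Pi(h)\| = O(\delta(h))$, so $L(h)$ decouples on ${\cal X} = E(h) \oplus \ker\Pi(h)$ up to exponentially small off-diagonal terms.

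The restricted operator $\Pi(h)L(h)\Pi(h)$ is represented by an $8\times 8$ matrix $M(h)$ obtained by pairing $L(h)$ applied to each basis function against the adjoint basis. In the formal block-diagonal limit $h=\infty$, $M$ splits into two identical $4\times 4$ blocks, each with spectrum $\{0,0,i\omega_0,-i\omega_0\}$, the double zero coming from the Jordan chain $L\phi=0,\ L\psi=-\phi$. A direct computation of the inner products using the asymptotics of $S$, $\phi$, $\xi$ in (S2)--(S3) gives $M(h)=M(\infty)+O(\delta(h))$, so standard matrix perturbation yields exactly four eigenvalues within $C\delta(h)$ of $0$ and two within $C\delta(h)$ of each of $\pm i\omega_0$: these are $\lambda_1,\dots,\lambda_8$. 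For the complementary part, a cutoff-gluing construction of $(L(h)-z)^{-1}$ on $\ker\Pi(h)$ from the single-pulse resolvents (which are uniformly bounded for $\Real z \geq -\gamma_0+\ep$ by (S2)) preserves this gap up to an exponentially small shift, giving some $\rho_0\in(0,\gamma_0)$.

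The main technical obstacle is the resolvent gluing on $\ker\Pi(h)$: one must verify that the ``parasitic'' terms created by multiplying the single-pulse resolvents by smooth cutoff functions can be absorbed via a Neumann series, which fixes the choice of $h^*$. A related subtlety is that the double zero of $M(\infty)$ may split into a genuine $2\times 2$ Jordan block, a pair of real eigenvalues, or a complex-conjugate pair under the perturbation; all three cases are consistent with the bound $|\lambda_j|\leq C\delta(h)$, but the argument must be careful not to claim more about the splitting than it can prove. Once these points are handled, the remaining manipulations parallel \cite{ei,ei-mimura-nagayama} closely, which is why the authors defer to those references for the details.
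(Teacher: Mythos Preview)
Your proposal is correct and aligns with the paper's approach: the paper does not give its own proof but simply states that the result follows ``in a similar way to Propositions~1 and~2 of \cite{ei}'', and your sketch is precisely an outline of Ei's spectral perturbation argument adapted to the drift-Hopf setting. You have correctly identified the key ingredients---the exponentially small coupling $B_1,B_2$, the eight-dimensional approximate eigenspace built from translates of the single-pulse modes, the oblique projection, and the resolvent gluing on the complement---and your closing remark that the authors defer the details to \cite{ei,ei-mimura-nagayama} is exactly right.
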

	
	Let $E_1(h)$ and $E_2(h)$ be the generalized eigenspaces corresponding 
	to $\{\lambda_j\}_{j=1,\dots,4}$ and $\{\lambda_j\}_{j=5,\dots,8}$,
	respectively.
	\begin{proposition}\label{prop:2}
		$E_1(h)$ and $E_1^*(h)$ are 
		spanned by functions $\bphi_j(h)(\cdot)$,$\bpsi_j(h)(\cdot)$ 
		and $\bphi_j^*(h)(\cdot)$,$\bpsi^*_j(h)(\cdot)$, respectively, 
		for $j=1,2$, and 
		$E_2(h)$ and $E_2^*(h)$ are 
		spanned by functions $\bxi_j(h)(\cdot)$, $\bar\bxi_j(h)(\cdot)$ 
		and $\bxi^*_j(h)(\cdot)$, $\bar\bxi^*_j(h)(\cdot)$,
		respectively, for $j=1,2$.
		\begin{align*}
			&\bphi_j(h)(x)=\phi(x-p_j)+O(\delta),\\
			&\bpsi_j(h)(x)=\psi(x-p_j)+O(\delta),\\
			&\bphi^*_j(h)(x)=\phi^*(x-p_j)+O(\delta),\\
			&\bpsi^*_j(h)(x)=\psi^*(x-p_j)+O(\delta),\\
			&\bxi_j(h)(x)=\xi(x-p_j)+O(\delta),\\
			&\bxi^*_j(h)(x)=\xi^*(x-p_j)+O(\delta)
		\end{align*}
		hold, where $\delta=\delta(h)$,
		and $O(\delta)$ here means $\|O(\delta)\|_{H^2}\leq C\delta$.
	\end{proposition}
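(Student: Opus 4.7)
The strategy is to view $L(h)$ as an $O(\delta)$ perturbation of the single-pulse linearization $L_j:=\calL'(S(\cdot-p_j))$ near each pulse core and to use Riesz spectral projections, associated with the clusters of small eigenvalues (near $0$ and near $\pm i\omega_0$) guaranteed by Proposition \ref{prop:1}, to correct the shifted single-pulse (generalized) eigenfunctions into actual bases of $E_1(h)$, $E_2(h)$, and their adjoints. This is the two-pulse analogue of the single-pulse construction in Propositions 1--2 of \cite{ei} and proceeds along the same lines.

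\textbf{Residual estimates.} Because $S(x;h)=S(x-p_1)+S(x-p_2)$, a Taylor expansion of $\calF'$ gives, uniformly on any bounded neighbourhood of the $j$-th pulse core,
\begin{equation*}
L(h) = L_j + \calF''(S(\cdot-p_j))\,S(\cdot-p_{3-j}) + O(\delta^2),
\end{equation*}
and the cross-term is $O(\delta)$ there by the exponential tail estimates in (S2)--(S3). Substituting the shifted single-pulse (generalized) eigenfunctions then yields, in $H^2(\mathbb{R})$,
\begin{align*}
L(h)\phi(\cdot-p_j) &= O(\delta), \\
L(h)\psi(\cdot-p_j) &= -\phi(\cdot-p_j)+O(\delta), \\
L(h)\xi(\cdot-p_j) &= i\omega_0\,\xi(\cdot-p_j)+O(\delta),
\end{align*}
and analogous identities for $L^*(h)$ acting on $\phi^*(\cdot-p_j),\psi^*(\cdot-p_j),\xi^*(\cdot-p_j)$.

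\textbf{Projectors and identification of the bases.} Choose disjoint small contours $\Gamma_0,\Gamma_+,\Gamma_-\subset\mathbb{C}$ surrounding the three clusters of Proposition \ref{prop:1}, strictly inside the $\gamma_0$-gap of (S2). A Neumann-series expansion of $(zI-L(h))^{-1}$ around a reference resolvent, together with the residual estimates above, shows that the Riesz projectors $P_0(h),P_\pm(h)$ and their adjoint counterparts are uniformly bounded on $H^2$ for $h>h^*$ and differ from their unperturbed single-pulse analogues by $O(\delta)$. Define $\bphi_j(h):=P_0(h)\phi(\cdot-p_j)$, $\bpsi_j(h):=P_0(h)\psi(\cdot-p_j)$, $\bxi_j(h):=P_+(h)\xi(\cdot-p_j)$, $\bar\bxi_j(h):=P_-(h)\overline{\xi(\cdot-p_j)}$, and the starred versions via the adjoint projectors. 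Each equals its unperturbed model plus $O(\delta)$ in $H^2$, giving the asymptotic expansions of the statement. Linear independence of the four functions in each invariant space follows from $O(\delta)$-closeness combined with the biorthogonality in Proposition \ref{prop:eigenvec}, and since $\dim E_1(h)=\dim E_2(h)=4$ by Proposition \ref{prop:1}, they span the respective invariant subspaces.

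\textbf{Main obstacle.} The subtle point is that $(\phi,\psi)$ form a Jordan chain for $L_0$, so the projected functions $\bphi_j(h),\bpsi_j(h)$ are not individually eigenvectors or generalized eigenvectors of $L(h)$: the matrix representing $L(h)|_{E_1(h)}$ in this basis is only block-close to two $2\times 2$ nilpotent blocks, with $O(\delta)$ off-diagonal entries coupling the two pulses (and ultimately producing the interaction terms of the reduced ODE). Accordingly, Proposition \ref{prop:2} asserts only the asymptotic form of the basis, not the preservation of the Jordan structure. The hardest technical step is the uniform-in-$h$ resolvent estimate on the small contours, which requires controlling $\calF''(S(\cdot-p_j))S(\cdot-p_{3-j})$ as an $O(\delta)$ multiplication operator on $H^2(\mathbb{R})$; once this estimate is in hand, the rest of the argument is a direct transcription of Ei's single-pulse proof to the present $8$-dimensional setting.
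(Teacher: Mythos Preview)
Your proposal is correct and follows essentially the same route as the paper, which does not give its own proof but states that the result is obtained ``in a similar way to Propositions~1 and~2 of \cite{ei}''; your Riesz-projector/perturbation argument is precisely the scheme used there. One minor imprecision: the identity $L(h)=L_j+O(\delta)$ does \emph{not} hold as an operator estimate (near the other core the difference is $O(1)$), so the residual bounds $L(h)\phi(\cdot-p_j)=O(\delta)$ etc.\ should instead be justified by pairing the $O(1)$ region of $L(h)-L_j$ with the exponential decay of $\phi(\cdot-p_j)$ away from $p_j$; once stated this way, everything goes through.
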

	
	Let $E(h):=E_{1}(h)\oplus E_{2}(h)$ and operators $Q(h)$ and $R(h)$ be the projections from ${\cal X}$ to
	$E(h)$ and $R(h)= Id-Q(h)$, respectively, where $Id$ is the identity 
	on ${\cal X}$. Let $E^\perp(h)=R(h){\cal X}$. Note that $E^\perp(h)$ is 
	characterized by
	\begin{equation*}
		E^\perp(h)=\{\bmv\in {\cal X};\lng\bmv,\bphi_j^*(h)\rng_{L^2}
		=\lng\bmv,\bpsi_j^*(h)\rng_{L^2}=\lng\bmv,\bxi_j^*(h)\rng_{L^2}=0\
		(j=1,2)\}.
	\end{equation*}
	
	Now, we fix $h^*$ large enough such that Propositions\ref{prop:1} and 
	\ref{prop:2} hold.
	Fix $\hat{h}$ with $\hat{h}>h^*$ arbitrarily. Then, we can show that
	there exists a homeomorphic map $\Theta(h)$ from $E^\perp(\hat{h})$ to 
	$E^\perp(h)$ for $h >\hat{h}$ in a similar way to \cite{ei}. 
	Let $p_1$ and $p_2$ be positions of pulses and $h = p_2 - p_1 >0$.
	We take $q_1, q_2\in\mathbb{R}$ and $r_1,r_2\in\mathbb{C}$ with $\lvert \bmq\rvert$
	and $\lvert \bmr\rvert$ sufficiently small.

	Define $E=\text{span}\{\phi,\psi,\xi,\bar{\xi}\}$ and $E^\perp=\{v\in X \lvert
	\langle v, \phi\rangle_{L^2}=\langle v, \psi\rangle_{L^2}=\langle v, \xi\rangle_{L^2}=\langle v, \bar{\xi}\rangle_{L^2}=0\}$.
	Let $\zeta_{ijkl}\in E^\perp$ be 
	the solutions of the equations
	\begin{equation*}
		\begin{aligned}
			L\zeta_{2000}+\Pi_{2000} &= \alpha_{2000}\xi+\bar{\alpha}_{2000}\bar{\xi},\\
			(L-2i\omega_0 I)\zeta_{0200}
			+\Pi_{0200} &= \alpha_{0200}\xi+\bar{\alpha}_{0200}\bar{\xi},\\
			(L-i\omega_0 I)\zeta_{1100}+\Pi_{1100} &= \alpha_{1100}\psi
			+\alpha'_{1100}\phi,\\
			L\zeta_{0110}+\Pi_{0110} &= \alpha_{0110}\xi
			+\bar{\alpha}_{0110}\bar{\xi},\\
			L\zeta_{0001} + \Pi_{0001} &=\alpha_{0001}\xi+\bar{\alpha}_{0001}\bar{\xi},
		\end{aligned}
	\end{equation*}
	where
	\begin{align*}
		&
		\Pi_{2000}=\frac12F''(S)\psi^2+\psi_x,\quad 
		\Pi_{0200}=\frac12F''(S)\xi^2,\quad
		\Pi_{1100}=F''(S)\psi\cdot\xi+\xi_x,\\ 
		&
		\Pi_{0110}=F''(S)\xi\cdot\bar{\xi},\quad
		\Pi_{0001}=g_{1}(S),
	\end{align*}
	and $\alpha_{jklm}$ satisfy the following equations:
	\begin{equation}
		\begin{aligned}
			\nonumber
			&
			\langle \Pi_{2000}-\alpha_{2000}\xi, \xi^* \rangle_{L^2} =0,\quad 
			\langle \Pi_{0200}-\alpha_{0200}\xi, \xi^* \rangle_{L^2} = 0,\quad
			\langle \Pi_{1100}-\alpha_{1100}\psi,\phi^*\rangle_{L^2} = 0,\\
			&
			\langle \Pi_{1100}-\alpha'_{1100}\phi,\psi^*\rangle_{L^2} =
			0,\quad 
			\langle \Pi_{0110}-\alpha_{0110}\xi,\xi^*\rangle_{L^2} = 0,\quad
			\langle \Pi_{0001}-\alpha_{0001}\xi, \xi^*\rangle_{L^2} = 0,\quad\\
			&
			\quad \alpha_{0020}=\bar{\alpha}_{0200}, \quad 
			\alpha_{1010}=\bar{\alpha}_{1010}, \quad \alpha_{1010}=\bar{\alpha}_{1100}. 
			\nonumber
		\end{aligned}
	\end{equation}
	
	We define $S_j(x)$, $\psi_j(x)$, $\xi_j(x)$, and $\zeta_{klmn,j}(x)$ as 
	follows:
	\begin{equation*}
		\begin{aligned}
			&S_j(x) :=S(x-h_j),\quad 
			\psi_j(x) := \psi(x-h_j),\quad
			\xi_j(x) := \xi(x-h_j),\quad \\
			&\zeta_{klmn,j} :=\zeta_{klmn}(x-h_j),
		\end{aligned}
	\end{equation*}
	where $h_1=0$ and $h_2=h$, and we define $\zeta_j(x)\in E^{\perp} (j=1,2)$
	as 
	\begin{equation*}
		\zeta_j(x) := q_j^2\zeta_{2000,j}(x)
		+(r_j^2\zeta_{0200,j}(x)+ q_jr_j\zeta_{1100,j}(x) + \text{c.c.})
		+\lvert r_j\rvert ^2\zeta_{0110,j}(x) + \eta_1\zeta_{0001,j}(x).
	\end{equation*}
	
	Let $p_1$ and $p_2$ be the positions of pulses with $p_1<p_2$, and
	the distance between the two pulses is given by $h=p_2-p_1$.  The quantities $q_1, q_2\in \Bbb{R}$ and  $r_1, r_2\in \Bbb{C}$ are small-amplitude parameters. 
	As explained later, $q_j$ and $\lvert r_j\rvert $ correspond to velocity and 
	oscillatory amplitude of the left and right pulse, respectively. 
	The ansatz is as follows: 
	\begin{equation}
		\bmu(x,t)=\Xi(p_1)\left\{\sum_{j=1,2}\left[
		S_j(x)+q_j\psi_j(x)+(r_j\xi_j(x)+\text{c.c.})+\zeta_j(x) 
		\right]
		+ \Theta(h)\bmw\right\}
		\label{eqn:1}
	\end{equation}
	for $\bmw\in E^{\perp}(\hat{h})$, where 
	$\Xi(y)$ is the translation operator given by
	$(\Xi(y)v)(z)=v(z-y)$ for $v\in L^2$.

	We obtain a finite-dimensional ODE system, describing the dynamics of the
	position and velocity and the amplitude of oscillations near the DH point. 
	\begin{proposition}\label{thm:ode-2pulse}
		Under assumptions (S1)--(S3), as long as $\lvert q_j\rvert$ and  $\lvert r_j\rvert$ 
		are sufficiently small and $h=p_2-p_1$ is sufficiently large, 
		the dynamics of $p_j$, $q_j$, and $r_j$ are governed by the 
		following equations:
		
		\begin{equation}
			\begin{aligned}\label{eqn:ode-final}
				\dot{p}_j&= q_j+(m_{1100}q_jr_j+\text{c.c.})+(m_{1200}q_jr_j^2+\text{c.c.})\\
				&\quad + m_{3000}q_j^3+m_{1110}q_j\lvert r_j\rvert^2+m_{1001}q_j\eta_1+H_j^p(h)+\text{h.o.t.},\\
				\dot{q}_j&= g_{1001}q_j\eta_1 +  g_{1002}q_j\eta_2
				+ (g_{1100} q_j r_j+ \text{c.c.})+g_{3000}q_j^3+ (g_{1200}q_jr_j^2 + \text{c.c.}) \\
				&\quad   + g_{1110}q_j\lvert r_j\rvert^2+H_j^q(h)+\text{h.o.t.},\\
				\dot{r}_j &= i\omega r_j +h_{0001}\eta_1
				+(h_{0101}r_j\eta_1 + c.c ) + h_{2000}q_j^2+(h_{0200} r_j^2 + \text{c.c.})\\
				&\quad+ h_{0110}\lvert r_j\rvert^2
				+ (h_{2100} q_j^2r_j + \text{c.c.})+(h_{0300}r_j^3+ \text{c.c.})\\
				&\quad + (h_{0210}r_j\lvert r_j\rvert^2 + \text{c.c.})+H_j^r(h)+\text{h.o.t.}, \\
			\end{aligned}
		\end{equation}
		where $\dot{\ }$ represents the derivative with respect to $t$,  
		\begin{align*}
			&
			m_{1100}=-\alpha'_{1100},\\ 
			&
			m_{3000}= -\langle {\cal F}''(S)\psi\cdot\zeta_{2000}
			+\frac16 {\cal F}'''(S)\psi^3+\partial_x\zeta_{2000}, \psi^*\rangle,\\
			&
			m_{1200}=-\langle {\cal F}''(S)\psi\cdot\zeta_{0200}
			+ {\cal F}''(S)\xi\cdot\zeta_{1100}
			+\frac12 {\cal F}'''(S)\psi\cdot\xi^2
			+\partial_x\zeta_{0200}, \psi^*\rangle,\\
			&
			m_{1110}=-\langle {\cal F}''(S)\xi\cdot\zeta_{1010}
			+{\cal F}''(S)\bar\xi\cdot\zeta_{1100}
			+{\cal F}''(S)\psi\cdot\zeta_{0110}
			+{\cal F}'''(S)\psi\cdot\xi\cdot\bar\xi
			+\partial_x\zeta_{0110}, \psi^*\rangle,\\
			&
			m_{1001}=-\langle g_1'(S)\psi+\partial_x\zeta_{0001}, \psi^*\rangle,\\
			&
			g_{1100}=\alpha_{1100},\\ 
			&
			g_{3000}= \langle {\cal F}''(S)\psi\cdot\zeta_{2000}
			+\frac16 {\cal F}'''(S)\psi^3+\partial_x\zeta_{2000}, \phi^*\rangle,\\
			&
			g_{1200}=\langle {\cal F}''(S)\psi\cdot\zeta_{0200}
			+ {\cal F}''(S)\xi\cdot\zeta_{1100}
			+\frac12 {\cal F}'''(S)\psi\cdot\xi^2
			+\partial_x\zeta_{0200}
			-\alpha_{1100}'\xi_x, \phi^*\rangle,\\
			&
			g_{1110}=\langle {\cal F}''(S)\xi\cdot\zeta_{1010}
			+{\cal F}''(S)\bar\xi\cdot\zeta_{1100}
			+{\cal F}''(S)\psi\cdot\zeta_{0110}, \phi^*\rangle\\
			&
			g_{1001}=\langle g_1'(S)\psi+\partial_x\zeta_{0001}, \phi^*\rangle,\\
			&
			g_{1002}=\langle E_{n,n}(1)\phi,\phi^*\rangle,\\
			&
			h_{2000}=\alpha_{2000},\ 
			h_{0200}=\alpha_{0200},\ 
			h_{0110}=\alpha_{0110},\
			h_{0001}=\alpha_{0001},\\
			&
			h_{2100}=\langle {\cal F}''(S)\psi\cdot\zeta_{1100}
			+{\cal F}''(S)\xi\cdot\zeta_{2000}
			+\frac12 {\cal F}'''(S)\psi^2\cdot\xi+\partial_x\zeta_{1100}, \xi^*\rangle,\\
			&
			h_{0300}=\langle {\cal F}''(S)\xi\cdot\zeta_{0200}+\frac16 {\cal F}'''(S)\xi^3
			,\xi^*\rangle,\\
			&
			h_{0210}=\langle {\cal F}''(S)\xi\cdot\zeta_{0110}
			+{\cal F}''(S)\bar\xi\cdot\zeta_{0200}+\frac12 {\cal F}'''(S)\xi^2\cdot\bar\xi,
			\xi^*\rangle,\\
			&
			h_{0101}=\langle {\cal F}''(S)\xi\cdot\zeta_{0001}+g_1'(S)\xi,
			\xi^*\rangle,\\
		\end{align*}
		and 
		\begin{align*}
			H_1^p(h)&=-H_2^p(h)
			=M_1\text{e}^{-\alpha h}(1+O(\text{e}^{-\gamma_1 h})),\\
			H_1^q(h)&=-H_2^q(h)
			=-M_2\text{e}^{-\alpha h}(1+O(\text{e}^{-\gamma_1 h})),\\
			\Real H_1^r(h)&= \Real H_2^r(h)
			=M_3\text{e}^{-\alpha h}(1+O(\text{e}^{-\gamma_1 h})),
		\end{align*}
		where $\gamma_1>0$. 
		The constants $M_1$, $M_2$, and $M_3$ are 
		given by 
		\begin{align*}
			&M_1 = -\int_{\Bbb{R}}e^{\alpha x}  \langle \{{\cal F}'(S(x))-{\cal F}'(\bmzero)\}\bma,
			\psi^*(x)\rangle_{L^2} dx, \\
			&M_2 = -2\alpha \langle D\bma, \bma^*\rangle, \ 
			M_3 = 2\alpha\langle D\bma, \bmb^*\rangle. 
		\end{align*}
	\end{proposition}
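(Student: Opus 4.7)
The plan is to substitute the ansatz \eqref{eqn:1} into \eqref{eqn:0} and extract ODEs for the slow variables $(p_j, q_j, r_j)$ by projecting onto the biorthogonal adjoint eigenfunctions $\bphi_j^*, \bpsi_j^*, \bxi_j^*$ provided by Proposition \ref{prop:2}. Concretely, I would differentiate the ansatz in $t$: since $p_j, q_j, r_j$ depend on $t$ while $\bmw$ lives on the transverse space $E^\perp(\hat h)$, applying $\pa_t$ produces $-\dot p_1\,\phi_j'(\cdot) + \dot q_j \psi_j + (\dot r_j \xi_j + \mathrm{c.c.}) + \pa_t \zeta_j + \Theta(h)\bmw_t$ after translation. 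Substituting into \eqref{eqn:0} and expanding ${\cal L}(\bmu;\tilde k) = L\bmu + \tfrac12 {\cal F}''(S)\bmu^2 + \tfrac16 {\cal F}'''(S)\bmu^3 + \cdots$ around each pulse $S_j$ and in the two small parameters $\eta_1, \eta_2$ produces a hierarchy of residue terms indexed by the monomials $q_j^k r_j^l \bar r_j^m \eta_1^n \eta_2^p$.

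At each order in this hierarchy I would impose the Fredholm solvability condition for the correction $\zeta_{klmn}$: the right-hand sides $\Pi_{klmn}$ must be orthogonal to the appropriate adjoint kernel of $L - (l-m)i\omega_0 I$ (namely $\phi^*, \psi^*$ or $\xi^*$). This pins down the scalars $\alpha_{klmn}, \alpha'_{1100}$ exactly as in the display equations just preceding the statement and makes each $\zeta_{klmn}\in E^\perp$ well-defined. Projecting the PDE residue onto $\psi_j^*, \phi_j^*, \xi_j^*$ and using the normalizations $\langle \phi,\psi^*\rangle = \langle \psi,\phi^*\rangle = \langle \xi,\xi^*\rangle = 1$ then isolates $\dot p_j, \dot q_j, \dot r_j$ on the left and assembles the polynomial right-hand sides with coefficients $m_\bullet, g_\bullet, h_\bullet$ in exactly the forms given. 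The computation is a long but essentially mechanical bookkeeping exercise, paralleling \cite{ei,ei-mimura-nagayama,ei-nishiura-ueda,ei2}; the only structural novelty is the simultaneous presence of the translation/drift block $(\phi,\psi)$ and the Hopf block $(\xi,\bar\xi)$, which forces cross-monomials such as $q_j r_j$ and $q_j r_j^2$ into every equation and requires that the generalized eigenfunction relation $L\psi = -\phi$ be used to generate the $\xi_x$ correction appearing in $g_{1200}$.

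The hard part is the derivation of the exponentially small tail-interaction terms $H_j^p(h), H_j^q(h), H_j^r(h)$ together with the explicit constants $M_1, M_2, M_3$. To isolate them I would write the projection of $\bigl[{\cal L}'(S(\cdot;h)) - {\cal L}'(S_j)\bigr]$ applied to the $j$-th pulse piece of the ansatz onto $\psi_j^*, \phi_j^*, \xi_j^*$; all $O(1)$ contributions cancel by the single-pulse eigen-equations, so that what survives is the overlap of the far tail of pulse $k\ne j$ against the localized adjoint $\bpsi_j^*, \bphi_j^*, \bxi_j^*$. Using (S2)--(S3), $S(x-h)\sim e^{-\alpha(h-x)}\bma$ on the support of the adjoints at $p_j$, so one factor $e^{-\alpha h}$ pulls out of each inner product. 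The $\psi_j^*$-pairing becomes $-\int_{\mathbb R} e^{\alpha x}\langle\{{\cal F}'(S(x))-{\cal F}'(\bmzero)\}\bma,\psi^*(x)\rangle dx$, giving $M_1$; the $\phi_j^*$- and $\xi_j^*$-pairings are converted via integration by parts, using $L\phi = 0$ and the asymptotic identity $D\bma_{xx} + {\cal F}'(\bmzero)\bma = \alpha^2 D\bma$ satisfied by the tail, into the surface-type quantities $-2\alpha\langle D\bma,\bma^*\rangle$ and $+2\alpha\langle D\bma,\bmb^*\rangle$, producing $M_2$ and $M_3$. The subleading factor $(1+O(e^{-\gamma_1 h}))$ absorbs both the $O(\delta)$ corrections in Proposition \ref{prop:2} and the contribution of the transverse remainder $\Theta(h)\bmw$, whose control follows from the invariant-manifold construction that we take from \cite{ei2} and only formally use here. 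The sign antisymmetry $H_1^p = -H_2^p$, $H_1^q = -H_2^q$ and symmetry $\Real H_1^r = \Real H_2^r$ then follow from the parity of $\phi$, $\psi$ (odd) and $\xi$ (even) together with the reflection exchanging the two pulses.
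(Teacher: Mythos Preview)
Your proposal is correct and follows essentially the same approach as the paper's proof: substitute the ansatz into both sides of \eqref{eqn:0}, Taylor-expand the nonlinearity around each pulse, and project onto the adjoint eigenbasis to read off the ODE coefficients, with the tail-interaction constants $M_1,M_2,M_3$ extracted from the overlap of the far pulse tail against the localized adjoints. The only point the paper makes explicit that you leave implicit is the two-pass bootstrapping: one first applies the projection $Q$ to obtain the crude estimates $\dot p_j=O(|\bmq|+\delta)$, $\dot q_j=O(|\bmq||\bmr|+\delta)$, $\dot r_j=i\omega r_j+O(|\bmq|^2+|\bmr|^2+|\bmeta|)$, and then feeds these back into the LHS terms $\dot q_j\,\partial_q\zeta_j$, $\dot r_j\,\partial_r\zeta_j$ before projecting again to obtain \eqref{eqn:ode-final}.
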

	
	The proof of Proposition \ref{thm:ode-2pulse} is given in Appendix
	\ref{appen:B1}. Our main result can be rewritten based on (\ref{eqn:ode-final}).
	\\
	\\
	\begin{theorem}\label{cor:1}
		Dynamics of two colliding pulses near DH point can be described as 
		\begin{equation}\label{eqn:ode-2pulse}
			\begin{aligned}
				\dot{v}_1 &= (-\mu_1-p_{11}v_1^2+p_{12}A_1^2)v_1
				-M_2s+\text{h.o.t.},\\
				\dot{A}_1 &= (-\mu_2-p_{21}v_1^2+p_{22}A_1^2)A_1
				+M_3s+\text{h.o.t.},\\
				\dot{v}_2 &= (-\mu_1-p_{11}v_2^2+p_{12}A_2^2)v_2
				+M_2s+\text{h.o.t.},\\
				\dot{A}_2 &= (-\mu_2-p_{21}v_2^2+p_{22}A_2^2)A_2 
				+M_3s+\text{h.o.t.},\\
				\dot{s}&=-\alpha s(v_2-v_1-2M_1s)+\text{h.o.t.},
			\end{aligned}
		\end{equation}
		where $v_j, A_j\in \mathbb{R}^+$,  $v_j=q_j +
		O(\lvert \bmq\rvert \lvert \bmr\rvert)$, 
		$A_j=r_j +
		O(\lvert \bmmu\rvert +\lvert \bmq\rvert ^2+\lvert \bmr\rvert^2)$, $\bmmu=(\mu_1, \mu_2)$ 
		($\lvert \bmmu\rvert=\sqrt{\mu_1^2+\mu_2^2}$), $s=\exp(-\alpha h)$, $p_{11}=-\Real G_{3000},\ p_{12}=\Real G_{1110},\ 
		p_{21}=-\Real H_{2100},\ p_{22}=\Real H_{0210}$.
		$\mu_1 = -\Real G_{1001}\eta_1-\Real G_{1002}\eta_2$,\ 
		$\mu_2 = -\Real H_{0101}\eta_1$, and
		$\mu'_2 =-\Imag H_{0101}\eta_1$. See Appendix \ref{sec:appenA2} for the definitions of $G_{ijkl}$ and $H_{ijkl}$. 
	\end{theorem}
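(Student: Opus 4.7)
The plan is to obtain \eqref{eqn:ode-2pulse} from \eqref{eqn:ode-final} through three operations: a near-identity polynomial change of coordinates that puts the self-interaction part of the system in DH normal form, the passage from the complex Hopf amplitude $r_j$ to the real amplitude $A_j$, and the rescaling $s=e^{-\alpha h}$ of the inter-pulse distance $h=p_2-p_1$. At the DH codimension 2 point, the linear part has eigenvalues $0$ (drift, direction $q_j$) and $\pm i\omega_0$ (Hopf, direction $r_j$), so the resonant monomials in $\dot q_j$ are of the form $q_j^{2k+1}\lvert r_j\rvert^{2\ell}$ and those in $\dot r_j$ of the form $q_j^{2k}r_j\lvert r_j\rvert^{2\ell}$; every other cubic-or-lower monomial appearing in \eqref{eqn:ode-final}, in particular $q_jr_j$, $q_jr_j^2$, $q_j^2$, $r_j^2$, and $\lvert r_j\rvert^2$, is non-resonant and can be absorbed into new coordinates.

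First I would introduce
\begin{equation*}
v_j = q_j + \Phi_j(\bmq,\bmr,\bar r_1,\bar r_2,\bmmu),\qquad \rho_j = r_j + \Psi_j(\bmq,\bmr,\bar r_1,\bar r_2,\bmmu),
\end{equation*}
with $\Phi_j$ a real polynomial and $\Psi_j$ a complex polynomial chosen so that substitution into \eqref{eqn:ode-final} kills every non-resonant coefficient through cubic order. The homological equations obtained by matching powers are linear and non-singular precisely because the exponents being eliminated violate the DH resonance condition; matching orders forces $\Phi_j=O(\lvert\bmq\rvert\lvert\bmr\rvert)$ and $\Psi_j=O(\lvert\bmmu\rvert+\lvert\bmq\rvert^2+\lvert\bmr\rvert^2)$, which produces the sizes of the corrections stated in the theorem. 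The transformed self-interaction part then reads
\begin{equation*}
\dot v_j = (-\mu_1-p_{11}v_j^2+p_{12}\lvert\rho_j\rvert^2)v_j+(\text{interaction})+\text{h.o.t.},
\end{equation*}
\begin{equation*}
\dot\rho_j = (i\omega_0-\mu_2+iK_j-P_{21}v_j^2+P_{22}\lvert\rho_j\rvert^2)\rho_j+(\text{interaction})+\text{h.o.t.},
\end{equation*}
where $P_{2k}$ are complex, $\mu_1=-\Real G_{1001}\eta_1-\Real G_{1002}\eta_2$, $\mu_2=-\Real H_{0101}\eta_1$, and the capital $G_{ijkl},H_{ijkl}$ are the resonant coefficients surviving the transformation whose explicit relation to the original $g_{ijkl},h_{ijkl}$ is recorded in Appendix~\ref{sec:appenA2}.

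Second, writing $\rho_j=A_j e^{i\vartheta_j}$ and using $\tfrac{d}{dt}A_j^2=2\,\Real(\bar\rho_j\dot\rho_j)$ converts the $\rho_j$-equation into a scalar amplitude equation for $A_j$ in which only the \emph{real} parts $p_{21}=-\Real H_{2100}$ and $p_{22}=\Real H_{0210}$ of the cubic coefficients appear, while the imaginary parts and $K_j$ enter only the phase $\vartheta_j$, which decouples at leading order and drops out of \eqref{eqn:ode-2pulse}. Third, for the distance variable $s=e^{-\alpha h}$ with $h=p_2-p_1$, $\dot s=-\alpha(\dot p_2-\dot p_1)s$; substituting the $\dot p_j$-equations of \eqref{eqn:ode-final} and using $H_1^p=-H_2^p=M_1 s(1+O(e^{-\gamma_1 h}))$ from Proposition~\ref{thm:ode-2pulse} yields $\dot s=-\alpha(v_2-v_1-2M_1 s)s+\text{h.o.t.}$, the last line of \eqref{eqn:ode-2pulse}. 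The interaction entries $\mp M_2 s$ and $+M_3 s$ in the $v_j$- and $A_j$-equations are inherited directly from the tail asymptotics $H_1^q=-H_2^q=-M_2 s$ and $\Real H_1^r=\Real H_2^r=M_3 s$ of Proposition~\ref{thm:ode-2pulse}.

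The main obstacle is the bookkeeping in the normal form step. One must verify that (a) a single polynomial choice of $\Phi_j,\Psi_j$ of the sizes asserted simultaneously removes all non-resonant quadratic and cubic self-interaction terms in both the $q_j$- and $r_j$-equations; (b) the near-identity transformation does not corrupt the leading-order $O(s)$ interaction tails, which holds because $\Phi_j,\Psi_j$ vanish at $\bmq=\bmr=\bmmu=0$ while $H_j^p,H_j^q,H_j^r$ are independent of $(\bmq,\bmr)$ at leading order, so the tails pass through the change of variables unchanged to leading order; and (c) the transformed resonant cubic coefficients aggregate into $G_{3000},G_{1110},H_{2100},H_{0210}$ with the precise form recorded in Appendix~\ref{sec:appenA2}. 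Once these points are checked, comparing the transformed equations with \eqref{eqn:ode-2pulse} term by term finishes the proof.
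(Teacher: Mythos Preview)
Your proposal is correct and follows essentially the same route as the paper: a near-identity polynomial change of variables eliminating the non-resonant quadratic and cubic terms (the paper gives the coefficients $V_{ijkl},W_{ijkl}$ explicitly, whereas you invoke the homological equations abstractly), followed by the polar substitution $w_j=A_je^{i\varphi_j}$ to extract the real amplitude dynamics. Your treatment of the $s$-equation and of why the $O(s)$ interaction tails survive the transformation unchanged is also in line with the paper's argument.
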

	The proof of Theorem \ref{cor:1} is given in Appendix
	\ref{sec:appenA2}.
	In our example, $\eta_{1} = k_{4}-\tilde{k}_{4} $, $\eta_{2} = (\tau - \tilde{\tau})/\tilde{\tau}$, where 
	$(\tilde{k}_{4}, \tilde{\tau})$ is the DH point shown in Fig.\ref{fig:pd-ode}(a). 
	\\
	\\
	We assume the following: 
	\begin{itemize}
		\item[(S4)]
		The signs of $M_2$ and $M_3$ are positive. 
	\end{itemize}
	This was confirmed numerically for (\ref{eqn:gd1}). 
	It is noted that $M_2>0$ means that two pulses interact repulsively.
	The principal 
	part of (\ref{eqn:ode-2pulse}) is given by
	\begin{equation}\label{eqn:ode-2pulse-full}
		\begin{aligned}
			\dot{v}_1 &= (-\mu_1-p_{11}v_1^2+p_{12}A_1^2)v_1
			-M_2s,\\
			\dot{A}_1 &= (-\mu_2-p_{21}v_1^2+p_{22}A_1^2)A_1
			+M_3s,\\
			\dot{v}_2 &= (-\mu_1-p_{11}v_2^2+p_{12}A_2^2)v_2
			+M_2s,\\
			\dot{A}_2 &= (-\mu_2-p_{21}v_2^2+p_{22}A_2^2)A_2
			+M_3s,\\
			\dot{s} &= -\alpha s(v_2-v_1-2M_1s).
		\end{aligned}
	\end{equation}

	The ODE system \eqref{eqn:ode-2pulse} looks close to the normal form of Hopf-Hopf type discussed in \cite{kuznetsov} (see Chap. 8.6).
	However,  \eqref{eqn:ode-2pulse} contains an $s$-variable so that it is not straightforward to prove that the principal 
	part of \eqref{eqn:ode-2pulse} is conjugate to the full system. 
	In the following, we first focus on the principal part (\ref{eqn:ode-2pulse-full}).  The validity of this 
	approximation will be examined numerically in Sec. \ref{sec:comparison}.

	We are particularly interested in the ''symmetric head-on collision" case, namely, when the two pulses have a mirror symmetry, (\ref{eqn:ode-2pulse-full}) becomes the following 3-dimensional system due to $v_2=-v_1 = v,\ A_2=A_1 = A$, which is the main object we analyze in the section.
	
	\begin{corollary}[Symmetric head-on collision]
		The dynamics of the symmetric head-on collision is described by 
		\begin{equation}\label{eqn:ode-2pulse-p}
			\begin{aligned}
				\dot{v} &= (-\mu_1-p_{11}v^2+p_{12}A^2)v
				-M_2s, \\
				\dot{A} &= (-\mu_2-p_{21}v^2+p_{22}A^2)A
				+M_3s,\\
				\dot{s} &= 2\alpha  (v+M_1s)s. 
			\end{aligned}
		\end{equation}
	\end{corollary}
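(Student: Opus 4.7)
The plan is to observe that the principal system (\ref{eqn:ode-2pulse-full}) possesses a discrete $\mathbb{Z}_2$ symmetry corresponding to a spatial reflection that swaps the two pulses, and that the claimed 3-dimensional reduction is nothing but the restriction of the flow to the fixed-point subspace of that symmetry. I would first introduce the involution
\begin{equation*}
\sigma \colon (v_1, A_1, v_2, A_2, s) \longmapsto (-v_2,\, A_2,\, -v_1,\, A_1,\, s)
\end{equation*}
and verify component-by-component that the vector field of (\ref{eqn:ode-2pulse-full}) is $\sigma$-equivariant. For the $v_j$-equations the two interaction coefficients $\mp M_2 s$ are precisely paired so that the sign flip $v_j \mapsto -v_{3-j}$ is compatible with $\dot v_j \mapsto -\dot v_{3-j}$; for the $A_j$-equations equivariance follows because $A_j$ enters only through even powers in the cubic part and the interaction term $+M_3 s$ is common to both pulses; for the $s$-equation it suffices to note that $\tilde v_2 - \tilde v_1 = v_2 - v_1$ under $\sigma$.

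Once $\sigma$-equivariance is established, $\mathrm{Fix}(\sigma) = \{v_1 + v_2 = 0,\ A_1 = A_2\}$ is a 3-dimensional invariant submanifold of the flow by the standard equivariant argument. I would parametrize it by $(v, A, s)$ through $v_1 = v$, $v_2 = -v$, $A_1 = A_2 = A$, and substitute directly into (\ref{eqn:ode-2pulse-full}). The $v$-equation is then read off from $\dot v_1 = \dot v$; the $A$-equation from $\dot A_1 = \dot A$, which by equivariance coincides with $\dot A_2$; and the $s$-equation from $v_2 - v_1 = -2v$ in $\dot s = -\alpha s(v_2 - v_1 - 2M_1 s)$, producing $\dot s = 2\alpha(v + M_1 s) s$. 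Collecting these three reduced equations reproduces (\ref{eqn:ode-2pulse-p}) verbatim.

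The only subtle point, and the one I would double-check, is that the symmetric ansatz is a priori over-determined: the substitution yields two expressions for $\dot v$ (from $\dot v_1$ and from $-\dot v_2$) and two for $\dot A$ (from $\dot A_1$ and $\dot A_2$), and these pairs must coincide for the reduction to be well-defined. Their coincidence is exactly the content of the $\sigma$-equivariance verified in the first step, so no additional work is needed; in particular, the opposite signs $\mp M_2 s$ in the $v_j$-equations are essential, since the ansatz $v_2 = -v$ forces a compensating sign flip under the chain rule. I foresee no analytical obstacle: the entire argument is elementary algebra once the correct involution has been identified, and the sole risk is careful sign bookkeeping when interpreting $v_2 = -v_1 = v$ as the common speed of the two oppositely-directed pulses.
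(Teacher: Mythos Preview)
Your proposal is correct and follows the same route as the paper, which simply states that under mirror symmetry one substitutes $v_2=-v_1=v$, $A_2=A_1=A$ into (\ref{eqn:ode-2pulse-full}); you make this rigorous by exhibiting the involution $\sigma$ and verifying equivariance, but the underlying mechanism is identical. One minor caution: your parametrization $v_1=v$, $v_2=-v$ is the one that actually reproduces the signs in (\ref{eqn:ode-2pulse-p}) (and is consistent with the paper's choice of initial condition $\widetilde{\mbox{EP}}_2^+$ with $v>0$), whereas the paper's phrasing ``$v_2=-v_1=v$'' appears to be a slip; stick with your convention when writing this up.
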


	\subsection{Bifurcation diagram of a single pulse} \label{sec:bifdiag}
	Prior to the detailed analysis of (\ref{eqn:ode-2pulse-p}), it is instructive to investigate a bifurcation diagram of a single pulse solution. The dynamics of a single pulse is obtained by taking $h=\infty$ or $s=0$ for (\ref{eqn:ode-2pulse-p}). Then, we have
	\begin{equation}\label{eqn:ode-1pulse-p}
		\begin{aligned}
			\dot{v} &= (-\mu_1-p_{11}v^2+p_{12}A^2)v,\\
			\dot{A} &= (-\mu_2-p_{21}v^2+p_{22}A^2)A.
		\end{aligned}
	\end{equation}
	Note that the system (\ref{eqn:ode-1pulse-p}) is the same as the amplitude 
	equations of the truncated
	normal form of Hopf-Hopf bifurcation.
	
	It is known that the phase diagram of equation (\ref{eqn:ode-1pulse-p})
	is qualitatively classified depending on $p_{ij}$ (see \cite{kuznetsov}). 
	We assume that $p_{ij}$
	satisfy the following: 
	\begin{itemize}
		\item[(S5)~] 
		\begin{equation}\label{eqn:joken}
			p_{11},\ p_{12},\ p_{21},\ p_{22}>0,\ p_{12}p_{21}/p_{11}p_{22}<1,
		\end{equation}
	\end{itemize}
	which was confirmed numerically for our system.
	
	The phase portrait of (\ref{eqn:ode-1pulse-p}) with (\ref{eqn:joken}) is shown  in Fig.\ref{fig:hb+hb_flow}
	(see \cite{kuznetsov}). 
	For any $(\mu_1,\mu_2)$, (\ref{eqn:ode-1pulse-p}) has a trivial
	equilibrium point
	\begin{equation*}
		\mbox{EP}_0:(v, A) = (0,0).
	\end{equation*}
	Equilibrium points with $A\not=0$ bifurcate at $\mu_2=0$ (Hopf line of PDE).
	We have $\mbox{EP}_1$ for $\mu_2\geq 0$:
	\begin{equation*}
		\mbox{EP}_1:(v, A)=\left(0, \sqrt{\frac{\mu_2}{p_{22}}}\right).
	\end{equation*}
	Equilibrium points with $v\not=0$ bifurcate at $\mu_1=0$ (drift line of PDE).
	We have $\mbox{EP}^\pm_2$ for $\mu_1\leq 0$.
	\begin{equation*}
		\mbox{EP}^\pm_2:(v, A)=\left(\pm\sqrt{-\frac{\mu_1}{p_{11}}}, 0\right).
	\end{equation*}
	From $\mbox{EP}_1$ and $\mbox{EP}^\pm_2$, the equilibrium points with $v\not=0,
	A\not=0$ 
	bifurcate at 
	\begin{equation*}
		T_1 = \left\{(\mu_1, \mu_2)\left \lvert\ \mu_1=\frac{p_{12}}{p_{22}}\mu_2, \mu_2>0\right. \right\}
	\end{equation*}
	and 
	\begin{equation*}
		T_2 =\left\{(\mu_1, \mu_2)\left \lvert\ \mu_2=\frac{p_{21}}{p_{11}}\mu_1, \mu_1<0\right. \right\},
	\end{equation*}
	respectively. In view of the condition $p_{12}p_{21}/p_{11}p_{22}<1$ 
	in (S5), we have $\mbox{EP}^{\pm}_3$ in regions (ii), (iii), and (iv): 
	\begin{equation*}
		\mbox{EP}^{\pm}_3:(v,A)=\left(\pm\sqrt{\frac{-p_{22}\mu_1+p_{12}\mu_2}
			{p_{11}p_{22}-p_{12}p_{21}}},\ 
		\sqrt{\frac{-p_{21}\mu_1+p_{11}\mu_2}
			{p_{11}p_{22}-p_{12}p_{21}}}\right) .
	\end{equation*}
	The solutions $\mbox{EP}^{\pm}_3$ with $v\not=0$ and $A\not=0$ correspond to traveling breathers that play a key role as a separator. 
	The bifurcation diagrams near the DH point
	shown in Fig.\ref{fig:codim3-bifdiag-gam8.0}(c) (resp.(d))
	are qualitatively equivalent to the behaviors by varying 
	(vi)$\to$ (i)$\to$ (ii)$\to$ (iii)
	(resp. (vi)$\to$ (v)$\to$ (iv)$\to$ (iii)) in Fig.\ref{fig:hb+hb_flow}. 
	Therefore, the ODE system (\ref{eqn:ode-1pulse-p}) with (S5) describes the PDE dynamics of a single pulse solution quite well.

	
	\begin{table}
		\begin{center}
			\begin{tabular}{|c|c|c|c|c|}
				\hline
				Region & $\mbox{EP}_0$ & $\mbox{EP}_1$ &
				$\mbox{EP}_2^\pm$ & $\mbox{EP}_3^\pm$ \\ \hline
				(i) & stable & unstable & --- & --- \\
				(ii) & stable & unstable & --- & unstable \\ 
				(iii) & unstable & unstable & ${\bf stable}$ & unstable \\
				(iv) & unstable & --- & {\bf stable} & unstable \\
				(v) & unstable & --- & unstable & --- \\
				(vi) & unstable & --- & --- & --- \\ \hline
				the associated & & & & \\ 
				dynamics of a single & standing pulse & standing breather & traveling pulse & traveling breather \\ 
				pulse solution & {\bf SP} & {\bf SB} & {\bf TP} & {\bf TB} \\ \hline
			\end{tabular}
			\vspace*{0.3cm}
			\caption{Existence of six different types of equilibria (${\mbox{EP}_0}, {\mbox{EP}_1}, {\mbox{EP}_2}^\pm, {\mbox{EP}_3}^\pm$) 
				for the 2-dimensional system of (\ref{eqn:ode-1pulse-p}) and their stability properties, depending on the 
				parameters $(\mu_1,\mu_2)$ (see Fig.\ref{fig:hb+hb_flow}). 
				The notation "---" represents the non-existence of the solutions. 
				They are associated with the four different pulse behaviors 
				(SP, SB, TP, and TB)
				of a single pulse solution in terms of the original PDEs. 
				The $\pm$ sign for ${\mbox{EP}_2}$ and ${\mbox{EP}_3}$ indicate 
				the propagating direction to the right or left, respectively. 
			}
			\label{Table_EP}
		\end{center}
	\end{table} 
	
	The existence region of these equilibria in the parameter space and their stabilities (in the projected 2-dimensional space $s=0$) are summarized in Table 1.
	We note that $\mbox{EP}_3^{\pm}$ coincides with $\mbox{EP}_2^\pm$ for $(\mu_1, \mu_2)$ on $T_2$,
	i.e., the basin size for $\mbox{EP}_2^\pm$ shrinks as $(\mu_1,\mu_2)$ tends to $T_2$.  
	This is an important property to understand the switching mechanism from
	preservation to annihilation.
	
	\subsection{Definition of preservation and annihilation in the ODE system}
	The equilibria of Table \ref{Table_EP} automatically become those of (\ref{eqn:ode-2pulse-p}) by setting the third component as $s=0$. Namely, the system (\ref{eqn:ode-2pulse-p})
	has a trivial equilibrium point 
	\begin{equation*}
		\widetilde{\mbox{EP}}_0: (v,A,s)=(0,0,0)
	\end{equation*}
	and nontrivial equilibrium points
	\begin{align*}
		&\widetilde{\mbox{EP}}_1:
		(v,A,s)=\left( 0,
		\sqrt{\frac{\mu_2}{p_{22}}}, 0\right),\\
		&\widetilde{\mbox{EP}}^\pm_2: (v,A,s)=\left(\pm\sqrt{-\frac{\mu_1}{p_{11}}},
		0, 0\right),\\
		&\widetilde{\mbox{EP}}^{\pm}_3:(v,A,s)=\left(\pm\sqrt{\frac{-p_{22}\mu_1
				+p_{12}\mu_2}{p_{11}p_{22}-p_{12}p_{21}}},\ 
		\sqrt{\frac{-p_{21}\mu_1+p_{11}\mu_2}
			{p_{11}p_{22}-p_{12}p_{21}}}, 0\right).
	\end{align*}

	\noindent
	Here, we added $\widetilde{\ \ }$ to the equilibria, indicating that they are the solutions of \eqref{eqn:ode-2pulse-p}. Note that their stabilities are different from those in Table \ref{Table_EP} as solutions of \eqref{eqn:ode-2pulse-p}.
	When we consider the symmetric head-on collision, the initial value is specified as 
	$\widetilde{\mbox{EP}}^{+}_2: (v,A,s)=\left(\sqrt{-\frac{\mu_1}{p_{11}}},
	0, 0\right)$, meaning that the orbit starts from $\widetilde{\mbox{EP}}^{+}_2$ at $t = -\infty$. We are interested in the fate of this orbit after collision. 
	
	We define $v_{\text{ep2}}$, $v_{\text{ep3}}$, and $A_{\text{ep3}}$ by
	\begin{equation*}
		v_{\text{ep2}}:= \sqrt{\frac{-\mu_{1}}{p_{11}}},\quad 
		v_{\text{ep3}}:= \sqrt{\frac{-p_{22}\mu_1+p_{12}\mu_2 }{p_{11}p_{22}-p_{12}p_{21}}},\quad 
		A_{\text{ep3}} := \sqrt{\frac{-p_{21}\mu_1+p_{11}\mu_2}
			{p_{11}p_{22}-p_{12}p_{21}}}.
	\end{equation*}
	The equilibrium point $\widetilde{\mbox{EP}}_2^\pm$ is given by $(v, A, s) = (\pm v_{\text{ep2}}, 0, 0)$, and 
	$\widetilde{\mbox{EP}}_3^\pm$ is given by $(v, A, s) = (\pm v_{\text{ep3}}, A_{\text{ep3}}, 0)$. 
	We define the following functions:
	\begin{equation}\label{eqn:GH}
		\begin{aligned}
			G(v)&:= \sqrt{(\mu_{1}+p_{11}v^2)/p_{12}}\quad (\mu_{1}+p_{11}v^{2}\geq 0), \\
			H(v)&:= \sqrt{(\mu_{2}+p_{21}v^2)/p_{22}}\quad (\mu_{2}+p_{21}v^{2}\geq 0),
		\end{aligned}
	\end{equation}
	where $A=G(v)$ and $A=H(v)$ satisfy $\dot{v}=0$ and $\dot{A}=0$ when $s=0$, respectively.

	Let us define the terminologies ``preservation'' and ``annihilation''  
	for the ODE system. The annihilation contains a large deformation, so we can prove something about its onset here, namely, the amplitude of oscillation $A$ grows and the orbit does not come back to any equilibrium point of (\ref{eqn:ode-2pulse-p}).
	Therefore, it is reasonable to use the amplitude of 
	oscillation $A$ as a criterion for annihilation 
	because annihilation is caused by subcritical Hopf bifurcation. 
	
	\begin{definition}[Preservation and annihilation]
		When the solution converges to $\widetilde{\mbox{EP}}_2^-$ as $t\to +\infty$ for a given set of parameters, we call it ``preservation''. On the contrary, when the variable $A$ of the solution diverges, we call it ``annihilation''. 
	\end{definition}
	\noindent
	
	\begin{figure}[htbp]
		\centering
		\includegraphics[width=6cm,clip]{./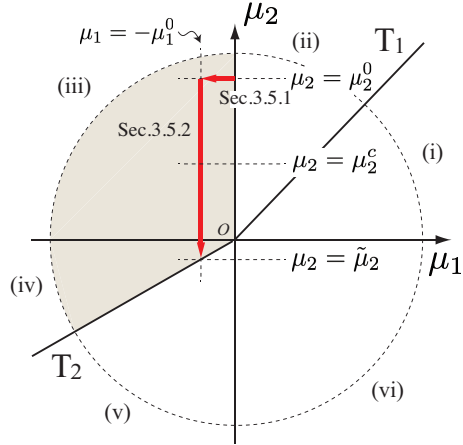}
		\caption{In Sec.\ref{sec:m2fix}, $\mu_2$ is fixed at $\mu_2^0$, and we change $\mu_1$ as a 
			control parameter. We will show that preservation occurs at some $\mu_1 = -\mu_1^0$ for $\mu_2=\mu_2^0$. 
			In Sec.\ref{sec:m1fix}, we take  $\mu_2$ as a control parameter and show that transition 
			from preservation to annihilation occurs at some parameter $\mu_2=\mu_2^c$. }
		\label{fig:parameter-move2}
	\end{figure}
	\subsection{Symmetric head-on collision}
	Now, we are ready to study (\ref{eqn:ode-2pulse-p}), especially the parametric dependency of the orbit starting from $\widetilde{\mbox{EP}}^{+}_2$ at $t = -\infty$, which is associated with a stable traveling pulse in regions (iii) and (iv) in the parameter space $(\mu_1,\mu_2)$ of Fig.\ref{fig:parameter-move2} (see also Fig. 5). We shall show that preservation is observed in region (iii), and the transition from preservation to annihilation occurs at some point in region (iv) as $\mu_2$ is decreased. 
	The equilibrium point  $\widetilde{\text{EP}}_{2}^{+}$  has a positive eigenvalue $2\alpha v_{\text{ep2}}$ for $\mu_{2} \geq \tilde{\mu}_{2}$.  
	In the following, as a convention,  
	we take an initial condition on the corresponding unstable manifold of $\widetilde{\text{EP}}_{2}^{+}$, unless stated explicitly. 
	In Sec.\ref{sec:m2fix}, we show the transition from standing to preservation as  $\mu_1$ is decreased for a fixed $\mu_2=\mu_2^0$. 
	In Sec.\ref{sec:m1fix}, we fix $\mu_1=-\mu_1^0$, and $\mu_2$ is decreased from $\mu_2^0$ to the line $T_2$. 
	We show the existence of the critical point $\mu_2^c$ at which 
	the solution orbit lies on the stable manifold of $\widetilde{\mbox{EP}}_3^-$ separating two regions of preservation and annihilation. 
	
	We define the following regions: 
	\begin{equation*}
		\begin{aligned}
			&D_0 := \{(v, A, s)\ \lvert\ v+M_1s=0,  s\geq 0\},\\
			&D_+ := \{(v, A, s)\ \lvert\ v+M_1s>0,  s\geq 0\},\\
			&D_- := \{(v, A, s)\ \lvert\ v+M_1s<0, s\geq 0\}. 
		\end{aligned}
	\end{equation*}
	We see from \eqref{eqn:ode-2pulse-p} that $s$ increases and decreases when the solution is in $D_+$ and $D_-$, 
	respectively. The two traveling pulses start to repel each other when the solution orbit moves from $D_+$ and $D_-$. 
	
	Before going into the detail, we briefly outline the proof here. First, let us define the time $T$ as the first hitting time of the solution at $D_{0}$, where we take $T=\infty$ 
	when the solution does not hit it. 
	We also define the time $T'$ as the first hitting time when the variable $v$ reaches $-v_{\text{ep3}}$, 
	where we take $T'=\infty$ when the solution does not hit it. 
	The preservation dynamics is roughly divided by the following three steps. 
	In the first step ($t\in [-\infty, T]$), the solution trajectory takes off from $\widetilde{\text{EP}}_{2}^{+}$, and $s$ increases and reaches the maximum value when the solution touches $D_{0}$ (see the 
	proof of Lemma \ref{lem:maxsadded}). 
	In the second step ($t\in (T, T']$), $s$ decreases and approaches the $s=0$ plane. 
	In the final step ($t\in (T', \infty)$), the solution converges to $\widetilde{\text{EP}}_{2}^{-}$. 
	To prove the existence of the preservation parameter regime and the heteroclinic orbit 
	from $\widetilde{\text{EP}}_{2}^{+}$ to $\widetilde{\text{EP}}_{3}^{-}$ in the validity regime of weak interaction,  
	we need to estimate $\lvert v\rvert$, $A$, and $s$ for each step. 
	
	The following lemma will be used to estimate the maximum of $s$, i.e., the minimum distance of the colliding 
	pulses. 
	\begin{lemma}\label{lem:sstar}
		Let $v^{*}$ be a positive constant. The function 
		$$
		f(s)=\frac{16\alpha(v^*)^2}{M_2}-s\log\left(\frac{2v^*+\lvert M_1\rvert s}{v^*+\lvert M_1\rvert s}\right)
		$$
		is monotonically decreasing. There exists a unique $s^*$ such that $f(s^*)=0$ 
		with $s^*=16\alpha(v^*)^2/(M_2\log 2)+O((v^*)^3)$ if $s$ satisfies $\lvert s/M_{1}v^{*}\rvert < 1$.
	\end{lemma}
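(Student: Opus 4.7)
The plan is to establish the three assertions in sequence: strict monotonicity of $f$, existence and uniqueness of the root $s^{*}$, and the asymptotic expansion. Set $g(s):=s\log((2v^{*}+\lvert M_{1}\rvert s)/(v^{*}+\lvert M_{1}\rvert s))$, so that $f(s)=16\alpha(v^{*})^{2}/M_{2}-g(s)$; monotonicity of $f$ reduces to the claim that $g'(s)>0$ for all $s\geq 0$. After differentiating and introducing the dimensionless variable $y=\lvert M_{1}\rvert s/v^{*}$, the sign of $g'$ is governed by the single function $\phi(y):=\log((2+y)/(1+y))-y/((2+y)(1+y))$, so it suffices to show $\phi>0$ on $[0,\infty)$.

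To prove $\phi>0$, I would differentiate directly. A short calculation collapses the two rational derivatives onto a common denominator $D(y):=(2+y)(1+y)$ and yields
\begin{equation*}
\phi'(y)=\frac{1}{2+y}-\frac{1}{1+y}-\frac{2-y^{2}}{D(y)^{2}}=-\frac{4+3y}{D(y)^{2}}<0,
\end{equation*}
so $\phi$ is strictly decreasing on $[0,\infty)$. Since both terms defining $\phi(y)$ vanish as $y\to\infty$, we have $\lim_{y\to\infty}\phi(y)=0$, and therefore $\phi(y)>0$ on $[0,\infty)$. This gives $g'(s)>0$ and hence monotonicity of $f$.

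For the existence of $s^{*}$, note $f(0)=16\alpha(v^{*})^{2}/M_{2}>0$, while the same expansion used above shows $g(s)\to v^{*}/\lvert M_{1}\rvert$ as $s\to\infty$, so $f(s)\to 16\alpha(v^{*})^{2}/M_{2}-v^{*}/\lvert M_{1}\rvert$. In the weak-interaction regime near the DH singularity, $v^{*}$ is small enough that $16\alpha v^{*}\lvert M_{1}\rvert<M_{2}$ (an assumption implicit in the setting), so this limit is negative and the intermediate value theorem, combined with strict monotonicity, produces a unique root $s^{*}$. For the asymptotic, in the regime $\lvert M_{1}\rvert s/v^{*}<1$ I would Taylor-expand
\begin{equation*}
\log\frac{2v^{*}+\lvert M_{1}\rvert s}{v^{*}+\lvert M_{1}\rvert s}=\log 2+\log\!\Big(1+\tfrac{\lvert M_{1}\rvert s}{2v^{*}}\Big)-\log\!\Big(1+\tfrac{\lvert M_{1}\rvert s}{v^{*}}\Big)=\log 2-\frac{\lvert M_{1}\rvert s}{2v^{*}}+O\!\Big(\big(\tfrac{\lvert M_{1}\rvert s}{v^{*}}\big)^{2}\Big),
\end{equation*}
substitute into $f(s^{*})=0$, and solve by iteration: the leading balance $s^{*}\log 2=16\alpha(v^{*})^{2}/M_{2}$ gives $s^{*}=16\alpha(v^{*})^{2}/(M_{2}\log 2)$, and substituting this back into the quadratic correction $\lvert M_{1}\rvert(s^{*})^{2}/(2v^{*})$ shows the error term is $O((v^{*})^{3})$, as claimed.

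The main obstacle is the monotonicity step: it is not obvious a priori that $g$ is monotone, since the prefactor $s$ grows while the logarithm factor decays to zero as $s\to\infty$, and a naive sign analysis of $g'(s)$ confronts the difference of two positive quantities of comparable magnitude. The key simplification is the non-dimensionalization $y=\lvert M_{1}\rvert s/v^{*}$, which isolates the geometry of the problem into the single function $\phi$ whose derivative has a clean factored form of constant sign; once $\phi'<0$ and $\phi(\infty)=0$ are in hand, the remaining arguments are routine.
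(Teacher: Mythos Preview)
Your proof is correct. The asymptotic step (Taylor expansion of the logarithm and iterative solution) matches the paper's argument exactly, and your explicit IVT check for existence, together with the observation that $g(s)\to v^{*}/\lvert M_{1}\rvert$, fills in a point the paper leaves implicit.

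The only difference is in the monotonicity step. The paper writes
\[
f'(s)=-\Big[\log(2v^{*}+\lvert M_{1}\rvert s)+\tfrac{\lvert M_{1}\rvert s}{2v^{*}+\lvert M_{1}\rvert s}\Big]
+\Big[\log(v^{*}+\lvert M_{1}\rvert s)+\tfrac{\lvert M_{1}\rvert s}{v^{*}+\lvert M_{1}\rvert s}\Big]
\]
and observes that the auxiliary function $x\mapsto\log(x+c)+c/(x+c)$ has derivative $x/(x+c)^{2}>0$, so the bracketed expression at $2v^{*}$ exceeds that at $v^{*}$, giving $f'(s)<0$ in one line. You instead non-dimensionalize to $y=\lvert M_{1}\rvert s/v^{*}$, compute $\phi'(y)=-(4+3y)/D(y)^{2}<0$, and use $\phi(\infty)=0$ to conclude $\phi>0$. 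Both routes are elementary and of comparable length; the paper's avoids the second differentiation by spotting the monotone combination $\log(x+c)+c/(x+c)$, while yours has the advantage of reducing everything to a single dimensionless function whose sign is manifest from the factored numerator.
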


	\begin{proof}
		Because $2v^*+\lvert M_1\rvert s> v^* + \lvert M_1\rvert s$, it holds that
		$$
		\frac{d}{ds}f(s)=-\log(2 v^*+\lvert M_1\rvert s)-\frac{\lvert M_1\rvert s}{2v^*+\lvert M_1\rvert s}+\log(v^*+\vert M_1\rvert s)
		+\frac{\lvert M_1\rvert s}{v^*+\lvert M_1\rvert s}<0, 
		$$ 
		where we used the fact that the function $\log(x+c) + c/(x+c)$ ($c>0$) is monotonically increasing for $x>0$. 
		From the condition $\lvert s/M_{1}v^{*}\rvert<1$, the Taylor expansion of the right-hand side of $f$ reads as
		$$
		f(s)=\frac{16\alpha(v^*)^2}{M_2}-s\left\{\log 2 -\frac{\lvert M_1\rvert s}{2v^*}+O\left(\left(\frac{s}{v^*}\right)^2\right)\right\}.
		$$
		Thus, $s^*=16\alpha(v^*)^2/(M_2\log 2)+O((v^*)^3)$.
	\end{proof}

	The following lemma gives an upper bound of $s$ when the solution is in $D_0\cup D_+$ and 
	the solution trajectory of \eqref{eqn:ode-2pulse-p} enters $D_-$ in a finite time. 
	\begin{lemma}\label{lem:maxsadded}
		Let assumptions (S1)--(S5) hold.  
		Assume that $A(t)<\widetilde{A}$ for some $\widetilde{A}=O(\lvert \bm{\mu}\rvert ^{1/2})$ and $0<t\leq T'$. 
		Then, it holds that
		\begin{enumerate}
			\item $v=O(\lvert \bm{\mu}\rvert^{1/2})$ and $s(t) = O(\lvert\bm{\mu}\rvert)$ for $t\leq T'$,
			\item  $T$ is finite, and
			\item $s(t)$ attains the maximum value at $t=T$. 
		\end{enumerate}
	\end{lemma}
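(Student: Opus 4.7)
The plan is to handle all three claims together via a bootstrap continuity argument, with Lemma~\ref{lem:sstar} supplying the sharp $s$-bound. The trajectory emerges from $\widetilde{\mbox{EP}}_{2}^{+}$ with $v(0)\approx v_{\text{ep2}}=O(|\bm{\mu}|^{1/2})>0$ and $s(0)\approx 0^{+}$, hence enters $D_{+}$ where $\dot s>0$. Under the standing hypothesis $A(t)<\widetilde{A}=O(|\bm{\mu}|^{1/2})$, the perturbation $p_{12}A^{2}v$ in the $v$-equation is $O(|\bm{\mu}|^{3/2})$, i.e., higher order with respect to the intended scalings $v=O(|\bm{\mu}|^{1/2})$, $s=O(|\bm{\mu}|)$ that I will bootstrap.

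To bound $v$, I multiply the $v$-equation by $v$ and obtain
\begin{equation*}
\tfrac{1}{2}\tfrac{d}{dt}v^{2}=(-\mu_{1}-p_{11}v^{2}+p_{12}A^{2})v^{2}-M_{2}sv.
\end{equation*}
Under the a priori bound $s\le 2s^{*}=O(|\bm{\mu}|)$, the term $|M_{2}sv|$ is $O(|\bm{\mu}|^{3/2})$ and $p_{12}A^{2}v^{2}$ is $O(|\bm{\mu}|^{2})$, while the leading quartic combination $-\mu_{1}v^{2}-p_{11}v^{4}$ is strictly negative as soon as $v^{2}$ exceeds $v_{\text{ep2}}^{2}$. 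A standard invariant rectangle argument yields $v^{2}(t)\le v_{\text{ep2}}^{2}+C|\bm{\mu}|^{3/2}$ on $[0,T']$, so $v(t)=O(|\bm{\mu}|^{1/2})$, closing the bootstrap on $v$.

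For the bound on $s$ I work on $[0,T]$, where the trajectory is in $D_{+}$. Setting $v^{*}:=v_{\text{ep2}}$ and using the effective reduction $\dot v=-M_{2}s+\text{h.o.t.}$ (the cubic damping only restores $v$ near $\pm v^{*}$) together with $\dot s/s=2\alpha(v+M_{1}s)$, I eliminate time via
\begin{equation*}
2\alpha(v+M_{1}s)\,dv=-M_{2}s\,d\log s+\text{h.o.t.},
\end{equation*}
and integrate from $v=v^{*}$ down to $v=-v^{*}$, where the cubic damping prevents undershoot by more than $O(|\bm{\mu}|)$. Collecting the $v$-integral and the $M_{1}s$ correction reproduces exactly the function $f$ of Lemma~\ref{lem:sstar}; monotonicity of $f$ and its unique zero $s^{*}$ then give $s(t)\le s^{*}=16\alpha(v^{*})^{2}/(M_{2}\log 2)+O((v^{*})^{3})=O(|\bm{\mu}|)$ on $[0,T]$. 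Because $\dot s\le 0$ on $D_{-}$, this bound persists up to $T'$, closing the bootstrap on $s$ and establishing claim~(1).

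Claim~(2) is handled by contradiction: if $T=+\infty$ the trajectory remains in $D_{0}\cup D_{+}$, so $v+M_{1}s\ge 0$ and, together with $s\le s^{*}=O(|\bm{\mu}|)$, forces $v\ge -M_{1}s^{*}=O(|\bm{\mu}|)$ indefinitely; but $\dot v=-M_{2}s+\text{h.o.t.}$ with $s$ staying bounded below by a positive constant (sustained exponential growth inside $D_{+}$) drives $v$ strictly below any fixed level in finite time, a contradiction. Claim~(3) is then immediate from $\dot s=2\alpha(v+M_{1}s)s>0$ on $[0,T)$, $\dot s(T)=0$, and $\dot s<0$ just past $T$. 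The principal obstacle is step two: reproducing $f$ cleanly from the coupled $(v,s)$ dynamics and verifying that the neglected corrections ($p_{11}v^{3}$, $p_{12}A^{2}v$, and the $M_{1}s$ feedback inside $\dot s$) perturb $s^{*}$ only at the order $O((v^{*})^{3})$ already absorbed into Lemma~\ref{lem:sstar}.
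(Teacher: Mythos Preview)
Your argument has two genuine gaps.

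\textbf{The $v$-bound.} Your energy estimate does not close at the claimed scale. At $v^{2}=v_{\text{ep2}}^{2}+C|\bm{\mu}|^{3/2}$ the quartic damping $(-\mu_{1}-p_{11}v^{2})v^{2}=-p_{11}\bigl(C|\bm{\mu}|^{3/2}\bigr)v_{\text{ep2}}^{2}=O(|\bm{\mu}|^{5/2})$, which is far too weak to dominate the forcing $|M_{2}sv|=O(|\bm{\mu}|^{3/2})$ that you yourself identified. Running the invariant-rectangle balance honestly gives only $v^{2}\le v_{\text{ep2}}^{2}+O(|\bm{\mu}|^{1/2})$, i.e.\ $|v|=O(|\bm{\mu}|^{1/4})$, and the bootstrap unravels. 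The paper avoids this entirely by using the \emph{sign} of $-M_{2}s$: for $v>0$ one drops $-M_{2}s<0$ and reads off $v\le\tilde{v}:=\sqrt{(-\mu_{1}+p_{12}\widetilde{A}^{2})/p_{11}}$ from the nullcline, while for $v<0$ the bound $v>-v_{\text{ep3}}$ is the \emph{definition} of $T'$. No bootstrap on $s$ is needed for the $v$-estimate.

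\textbf{The $s$-bound and Lemma~\ref{lem:sstar}.} Your phase-plane elimination does not reproduce $f$. Carrying it out, $\int(-M_{2}s)\,d\log s=-M_{2}\int ds=-M_{2}\Delta s$ and $\int_{v^{*}}^{-v^{*}}2\alpha v\,dv=0$, so at leading order you get $s_{\max}\approx \alpha(v^{*})^{2}/M_{2}$: a valid bound, but \emph{not} the function $f$. In the paper, $f$ and $s^{*}$ encode a completely different balance. One fixes $t_{1}$ with $s(t_{1})=s^{*}/2$, integrates the Riccati inequality $\dot s\le 2\alpha v^{*}s+2\alpha|M_{1}|s^{2}$ to obtain a time window $t_{2}-t_{1}=(2\alpha v^{*})^{-1}\log\bigl((2v^{*}+|M_{1}|s^{*})/(v^{*}+|M_{1}|s^{*})\bigr)$ in which $s$ can at most reach $s^{*}$, and separately shows $\dot{(v+M_{1}s)}\le -M_{2}s^{*}/4$ on that window. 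The zero of $f$ is precisely the condition $(M_{2}s^{*}/4)(t_{2}-t_{1})=2v^{*}\ge v(t_{1})+|M_{1}|s(t_{1})$, forcing $v+M_{1}s$ to hit zero before $t_{2}$, hence $T\le t_{2}<\infty$ and $s(T)\le s^{*}$. Your $dv/d\log s$ manipulation cannot produce this logarithm, which comes from the explicit Riccati solution in $t$, not from eliminating $t$.
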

	\begin{proof}
		When $t<T'$, from the conditions $A(t)<\widetilde{A}$ and $M_{2}>0$, it holds that $\lvert v(t)\rvert <\tilde{v}:= \sqrt{\mu_{1} +p_{12}\widetilde{A}^{2}}/\sqrt{p_{11}}
		=O(\lvert\bm{\mu}\rvert^{1/2})$
		for $t< T'$ (Fig.\ref{fig:yz}(a)).  That is, $\lvert v(t)\rvert = O(\lvert\bm{\mu}\rvert^{1/2})$ for $t<T'$. 
		
		We take $v^{*}=\tilde{v}$ and $s^{*}$ as given in Lemma \ref{lem:sstar}. 
		Assume that there exists $t_1 \leq T$ such that $s(t_1)=s^*/2$. 
		From the definition of $T$,  $v(t)+M_1s(t)\geq 0$ holds for 
		$t\leq t_1 (\leq T)$. 
		We will prove that $v(t)+M_1s(t)=0$ before $t$ reaches 
		$t_2:=t_1+\{\log(2v^*+\lvert M_1\rvert s^*)-\log(v^*+\lvert M_1\rvert s^*)  \}/(2\alpha v^*)$. 
		From 
		(\ref{eqn:ode-2pulse-p}), we have 
		\begin{equation} \label{eqn:c-1added}
			\frac{ds}{dt} = 2\alpha (v +M_1s)s \leq 2\alpha v^* s+2\alpha \lvert M_1\rvert s^2, 
			\quad t_1\leq t. 
		\end{equation}
		Integrating both sides of (\ref{eqn:c-1added}) during $[t_{1}, t]$, we have
		\begin{equation}\label{eqn:estimofs}
			s(t) \leq \frac{v^*s^*}{(2v^*+\lvert M_1\rvert s^*)\exp\left\{-2\alpha v^*(t-t_1)\right\}- \lvert M_1\rvert s^*}.
		\end{equation}
		Therefore, $s(t)\leq  s^*$ for $t\leq t_2:=t_1+\{\log(2v^*+\lvert M_1\rvert s^*)-\log(v^*+\lvert M_1\rvert s^*)  \}/(2\alpha v^*)$, 
		where we used properties that 
		the right-hand side of \eqref{eqn:estimofs} is monotonically increasing and 
		attains $s^*$ at $t=t_2$. 
		Meanwhile, for $t_1\leq t$, it follows that 
		\begin{equation}\label{eqn:c-2added}
			\begin{aligned}
				\dot{v} +M_1\dot{s}&= (-\mu_1 - p_{11}v^2 +p_{12}A^{2})v - M_2 s +2\alpha M_1(v+M_1s)s\\
				&\leq  (2\alpha M_1v +\alpha M_1^2 s) s  - M_2s +  O(\lvert\bm{\mu}\rvert^{3/2})  \\
				& \leq - M_2 s /2  \\
				&\leq -M_2 s^*/4,
			\end{aligned}
		\end{equation}
		where we used  $A <\widetilde{A} = O(\lvert\bm{\mu}\rvert^{1/2})$, $\lvert v\rvert = O(\lvert \bm{\mu}\rvert^{1/2})$, 
		and  $s^* = O((v^*)^2) = O(\lvert \bm{\mu}\rvert)$. 
		Integrating both sides of (\ref{eqn:c-2added}) 
		during $t\in[t_1,t_2]$,
		\begin{align*}
			v (t_2) +M_1s(t_2)&\leq  v(t_1)+\lvert M_1\rvert s(t_1) -M_2 s^* (t_2-t_1)/4\\
			&= v(t_1) +\lvert M_1\rvert s(t_1)-\frac{M_2 s^*}{8\alpha v^*} \log\left(\frac{2 v^*+\lvert M_1\rvert s^*}{v^*+\lvert M_1\rvert s^*}\right) \\
			&\leq  2v^*-\frac{M_2 s^*}{8\alpha v^*} \log\left(\frac{2 v^*+\lvert M_1\rvert s^*}{v^*+\lvert M_1\rvert s^*}\right) \\
			&= 0.
		\end{align*}
		Here, we have used Lemma \ref{lem:sstar} and $\lvert M_1\rvert s(t_1)=\lvert M_{1}\rvert s^{*}/2\leq v^*$.
		This implies that $T\leq t_{2}$, i.e., $T$ is finite. 
		The variable $s$ has the maximum value for $t = T$ because the sign of $v+M_{1}s$ determines the sign 
		of $\dot{s}$. Thus, $s \leq s^{*}$ holds, that is, $s = O((v^{*})^{2}) = O(\lvert \bm{\mu}\rvert)$ for $t\leq T$. 
		Because $s$ decreases in $D_{-}$, $s(t)\leq s(T)$ for $t \geq T$, that is,  $s = O(\lvert \bm{\mu}\rvert)$ for $t\leq T'$. 
	\end{proof}

	\subsubsection{Existence of preservation region --Decreasing $\mu_1$ from the drift line ($\mu_{1}=0$) with $\mu_2=\mu_2^0>0$ being fixed--}\label{sec:m2fix}
	Suppose that $\mu_2=\mu_2^0>0$ is fixed and decreasing $\mu_1$ from the drift line ($\mu_{1}=0$) 
	(Fig.\ref{fig:parameter-move2}). 
	The dynamics of (\ref{eqn:ode-2pulse-p}) around $\mu_1=0$ is 
	described as follows:
	\begin{equation}\label{eqn:reducemu}
		\begin{aligned}
			&\dot{s}=2\alpha s (v+M_1s) + \text{h.o.t.},\\
			&\dot{v}=(-\mu_1-p_{11}v^2)v-M_2s + \text{h.o.t.}
		\end{aligned}
	\end{equation}
	These equations are obtained by 
	applying the center manifold theory around the drift point
	$\mu_1=0$. The variable $A$ is described by the following 
	algebraic equation on the center manifold: 
	\begin{equation}\label{eqn:cmf}
		A = \frac{-2M_1M_2\alpha}{\mu_2^3} s^2 + \frac{2M_2\alpha}{\mu_2^2} sv
		- \frac{M_2}{\mu_2} s.
	\end{equation}
	The derivation of \eqref{eqn:cmf} is given in Appendix \ref{sec:appendc}. 
	
	\begin{lemma}\label{lem:Acmf}
		Let assumptions (S1)--(S5) hold and $\mu_2=\mu_2^0>0$ be fixed. If $\mu_{1}<0$ is taken sufficiently close to $0$, 
		then $s(t) = O(\lvert \mu_{1}\rvert)$ for all $t>0$. 
	\end{lemma}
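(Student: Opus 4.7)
The plan is to combine the center-manifold reduction \eqref{eqn:reducemu}--\eqref{eqn:cmf} with a near-drift rescaling that exposes an integrable leading-order structure. Because $\mu_{2} = \mu_{2}^{0} > 0$ is fixed and bounded away from the Hopf line, the $A$-direction of \eqref{eqn:ode-2pulse-p} is normally hyperbolic near the drift point $\mu_{1} = 0$; standard center-manifold theory therefore reduces the three-dimensional system to the two-dimensional system \eqref{eqn:reducemu} with $A$ slaved to $(v,s)$ by \eqref{eqn:cmf}, so it is enough to bound $s$ inside the reduced $(v,s)$-dynamics.

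I would introduce the rescaling $v = \epsilon V$, $s = \epsilon^{2} S$, $\tau = \epsilon t$ with $\epsilon^{2} = \lvert\mu_{1}\rvert/p_{11}$, so that $\widetilde{\text{EP}}_{2}^{\pm}$ corresponds to $(V,S) = (\pm 1, 0)$. Substituting into \eqref{eqn:reducemu} yields
\begin{equation*}
\frac{dV}{d\tau} = -M_{2} S + \epsilon\, p_{11} V(1 - V^{2}),\qquad
\frac{dS}{d\tau} = 2\alpha V S + 2\alpha M_{1}\epsilon S^{2}.
\end{equation*}
At $\epsilon = 0$ this planar system is integrable with first integral $\mathcal{H}(V,S) = V^{2} + (M_{2}/\alpha) S$. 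The unperturbed orbit on the unstable manifold of $(1,0)$ lies on the level set $\mathcal{H} \equiv 1$, attains its maximum $S = \alpha/M_{2}$ at $V = 0$, and limits to the stable equilibrium $(-1,0)$. In the original variables this gives $s_{\max} = \epsilon^{2}\alpha/M_{2} = O(\lvert\mu_{1}\rvert)$, which is exactly the bound we want.

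For the perturbed system ($\epsilon > 0$) the strategy is a Gronwall-type continuity argument: on any compact piece of the trajectory where $V$ stays in $[-1+\delta, 1-\delta]$ the elapsed $\tau$-time is $O(1)$, so $\mathcal{H}$ varies by at most $O(\epsilon)$ and therefore $S \leq \alpha/M_{2} + O(\epsilon)$ there. The main obstacle is the behavior near the two hyperbolic equilibria $(\pm 1, 0)$ of the planar system, where the unperturbed heteroclinic takes infinite $\tau$-time and a naive Gronwall estimate degenerates. I would overcome this by splitting the trajectory into three pieces: near the take-off $\widetilde{\text{EP}}_{2}^{+}$ the orbit lies on the local unstable manifold, so $s$ is exponentially small in the time spent there; in the outer compact region the Gronwall estimate gives $s = O(\lvert\mu_{1}\rvert)$; and in $D_{-}$ near the landing point $\widetilde{\text{EP}}_{2}^{-}$ the sign of $v + M_{1} s$ forces $\dot s < 0$, so $s$ is monotonically decreasing and cannot exceed its value at the collision time $T$. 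Matching these three estimates yields the uniform bound $s(t) = O(\lvert\mu_{1}\rvert)$.
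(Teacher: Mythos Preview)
Your argument is correct and takes a genuinely different route from the paper. The paper's proof stays in the original variables: since $M_{2}>0$ it first bounds $v(t)\le v(0)=O(\lvert\mu_{1}\rvert^{1/2})$ on $[0,T]$, then sets $v^{*}=v(0)$, invokes Lemma~\ref{lem:sstar}, and repeats verbatim the differential inequality for $v+M_{1}s$ from Lemma~\ref{lem:maxsadded} to obtain $s(T)\le s^{*}=O(\lvert\mu_{1}\rvert)$; monotonicity of $s$ in $D_{-}$ finishes the argument for $t>T$. There is no rescaling and no first integral---only the explicit comparison function $f(s)$ of Lemma~\ref{lem:sstar}.

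Your near-drift rescaling exposes the integrable limit $\mathcal{H}(V,S)=V^{2}+(M_{2}/\alpha)S$ and treats the cubic drift term as an $O(\epsilon)$ perturbation of a heteroclinic parabola. This is more conceptual and even yields the sharp leading constant $s_{\max}\sim \alpha\lvert\mu_{1}\rvert/(p_{11}M_{2})$, whereas the paper's comparison produces the coarser $16\alpha\lvert\mu_{1}\rvert/(p_{11}M_{2}\log 2)$. The price is the three-piece splitting near the non-uniformly hyperbolic endpoints $(\pm1,0)$; note that the take-off eigenvalue $2\alpha$ is $O(1)$, so the $\tau$-time spent in each end neighborhood is $O(\log(1/\delta))$ and $\dot{\mathcal{H}}=O(\epsilon(1-V^{2})+\epsilon S^{2})$ integrates to $O(\epsilon\delta)$ there, making the matching routine. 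The paper's approach avoids this bookkeeping by working directly with the monotone quantity $v+M_{1}s$, at the cost of a looser bound.
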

	\begin{proof}
		Because $M_{2}>0$, $v(t) \leq v(0) = O(\lvert \mu_{1}\rvert^{1/2})$ for $t\in [0, T]$. 
		Let $v^{*} = v(0)$ and $s^{*}$ be as given in Lemma \ref{lem:sstar}. 
		From \eqref{eqn:reducemu}, 
		\begin{equation}\label{eqn:c-2added}
			\begin{aligned}
				\dot{v} +M_1\dot{s}&= (-\mu_1 - p_{11}v^2 )v - M_2 s +2\alpha M_1(v+M_1s)s + \text{h.o.t.}\\
				&\leq  (2\alpha M_1v +\alpha M_1^2 s) s  - M_2s + \text{h.o.t.}\\
				& \leq - M_2 s /2  \\
				&\leq -M_2 s^*/4,
			\end{aligned}
		\end{equation}
		where we used  $v = O(\lvert \mu_{1}\rvert^{1/2})$ and  $s^* = O((v^*)^2) = O(\lvert\mu_{1}\rvert)$ for $t<T$. 
		By the same arguments in the proof of Lamma \ref{lem:maxsadded}, we find that $s(T) = O(\lvert \mu_{1}\rvert)$. 
		Because $s$ attains the maximum value at $t=T$, $s(t)= O(\lvert \mu_{1}\rvert)$ for all $t>0$. 
	\end{proof}
	\begin{theorem}[preservation]\label{thm:preservation}
		Let assumptions (S1)--(S5) hold and $\mu_2=\mu_2^0>0$ be fixed. If $\mu_{1}<0$ is taken sufficiently close to $0$, the solution converges to $\widetilde{\mbox{EP}}_2^- = (-\sqrt{-\mu_1/p_{11}},0,0)$.
	\end{theorem}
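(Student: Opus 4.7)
The plan is to lift the analysis onto the center manifold at the drift bifurcation point. Since $\mu_2 = \mu_2^0 > 0$ is bounded away from $0$, the $A$-direction is hyperbolically attracting and a smooth two-dimensional invariant manifold graph $A = A(v, s)$ exists in a neighborhood of the origin, with the explicit leading expression (\ref{eqn:cmf}). Restricted to this manifold the dynamics is governed by (\ref{eqn:reducemu}). The proof then reduces to a planar phase-plane analysis in the $(v, s)$-half-plane $\{s \geq 0\}$, provided the full-system orbit in question can be kept inside the validity region of the center manifold for all $t > 0$; this is exactly what Lemma \ref{lem:Acmf} delivers, giving $s(t) = O(|\mu_1|)$, and substituting into (\ref{eqn:cmf}) also yields $A(t) = O(|\mu_1|)$ uniformly in $t$.

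On the invariant line $s = 0$ of (\ref{eqn:reducemu}), the equilibria are $v = 0$ and $v = \pm v_{\text{ep2}}$. A direct linearization at $\widetilde{\mbox{EP}}_2^+ = (v_{\text{ep2}}, 0)$ gives eigenvalues $2\mu_1 < 0$ and $2\alpha v_{\text{ep2}} > 0$, so this point is a hyperbolic saddle with a one-dimensional unstable manifold entering $D_+$. At $\widetilde{\mbox{EP}}_2^- = (-v_{\text{ep2}}, 0)$ both eigenvalues are negative ($2\mu_1$ and $-2\alpha v_{\text{ep2}}$), hence $\widetilde{\mbox{EP}}_2^-$ is a hyperbolic sink of the planar reduced system. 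The target orbit $\gamma(t)$ is the branch of $W^u(\widetilde{\mbox{EP}}_2^+)$ pushing into $D_+$.

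The core argument then proceeds in three steps. First, the same estimates that proved Lemma \ref{lem:maxsadded}, specialized to (\ref{eqn:reducemu}) as in Lemma \ref{lem:Acmf}, show that the first hitting time $T$ of $D_0$ along $\gamma$ is finite and that $s(T) = O(|\mu_1|)$ while $v(T) = -M_1 s(T) = O(|\mu_1|)$. Second, in $D_-$ one has $\dot{s} = 2\alpha s(v + M_1 s) < 0$, so $s$ is monotonically decreasing for $t > T$, and the inequality $\dot{v} \leq -\mu_1 v - p_{11} v^3 + M_2 |s|$ combined with $s(t) \leq s(T) = O(|\mu_1|)$ forces $v(t)$ to approach the stable branch $v = -v_{\text{ep2}}$ of $\{\dot{v}|_{s=0} = 0\}$. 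Third, I would construct a small trapping rectangle $R = [-v_{\text{ep2}} - \delta, -v_{\text{ep2}} + \delta] \times [0, c|\mu_1|]$ around $\widetilde{\mbox{EP}}_2^-$, verify that the vector field of (\ref{eqn:reducemu}) points strictly inward on $\partial R \setminus \{s = 0\}$ for $|\mu_1|$ small (using the signs just noted), and show that $\gamma$ enters $R$ in finite time after $T$. Once inside $R$, the local stable-manifold theorem at the hyperbolic sink $\widetilde{\mbox{EP}}_2^-$ forces $\gamma(t) \to \widetilde{\mbox{EP}}_2^-$ as $t \to \infty$, which together with the center-manifold relation $A \to 0$ gives the desired convergence.

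The step I expect to be the main obstacle is verifying the inward-pointing condition on the top and lateral faces of $R$ uniformly as $\mu_1 \to 0^-$. Because the natural scales are $v = O(|\mu_1|^{1/2})$ while $s = O(|\mu_1|)$, the rectangle $R$ degenerates and the h.o.t.\ in (\ref{eqn:reducemu}) must be controlled against the leading-order balance $(-\mu_1 - p_{11} v^2) v$ versus $-M_2 s$. Concretely one has to show that along the top edge $s = c |\mu_1|$ the term $M_2 s$ cannot push $v$ past $-v_{\text{ep2}} - \delta$ before $s$ has decayed sufficiently; this amounts to an exponential-decay estimate for $s$ in $D_-$, of the form $s(t) \leq s(T) \exp(-\alpha v_{\text{ep2}} (t - T))$ for $t$ slightly past $T$, combined with a Gronwall-type bound on $v - (-v_{\text{ep2}})$. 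Once this rate comparison is established, the remainder of the argument is a standard invariant-region plus hyperbolic-sink conclusion.
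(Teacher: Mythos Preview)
Your proposal is correct in spirit and uses the same scaffolding as the paper (center-manifold reduction to \eqref{eqn:reducemu}, Lemma~\ref{lem:Acmf} for the uniform $s=O(|\mu_1|)$ bound, then \eqref{eqn:cmf} for $A=O(|\mu_1|)$, then the finite hitting time $T$ of $D_0$ from Lemma~\ref{lem:maxsadded}). The difference lies in how the endgame is handled. The paper avoids your trapping-rectangle construction entirely by working with the single scalar $v+M_1s$: on $D_0$ one has $v=-M_1s$, and then
\[
\dot v + M_1\dot s = (-\mu_1 - p_{11}v^2 + p_{12}A^2)(-M_1 s) - M_2 s \le -\tfrac{M_2}{2}s < 0,
\]
so the orbit cannot recross $D_0$ and stays in $D_-$ for all $t>T$; hence $\dot s<0$ and $s\to 0$ exponentially, after which the $s=0$ phase portrait in region~(iii) forces convergence to $\widetilde{\mbox{EP}}_2^-$. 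This monotone-quantity trick both simplifies the argument and plugs a small gap in your outline: you assert ``in $D_-$ one has $\dot s<0$, so $s$ is monotonically decreasing for $t>T$'' without ever ruling out a return to $D_+$, which is exactly what the $v+M_1s$ computation delivers. Your trapping-rectangle route would also work and is arguably more self-contained than the paper's final appeal to Fig.~\ref{fig:hb+hb_flow}, but it is heavier machinery than needed once you have the $v+M_1s$ observation.
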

	\begin{proof}
		From the center manifold theory in Appendix B, 
		$\lvert v(t)\rvert$ and $s(t)$ can be taken as arbitrarily small for $t\leq T'$  by taking $\lvert \mu_{1}\rvert$ sufficiently smaller 
		than $\mu_{2}$. 
		Thus, from Lemma \ref{lem:Acmf} and \eqref{eqn:cmf}, $A(t)<A_{\text{ep3}}$ holds for $t>0$  
		because $A=O(\lvert \mu_{1}\rvert)$ and $A_{\text{ep3}} =\sqrt{(-p_{21}\mu_{1}+p_{11}\mu_{2})/(p_{11}p_{22} - p_{12}p_{21})}\gg \lvert \mu_{1}\rvert$ 
		for sufficiently small $\lvert \mu_{1}\rvert $. 
		Thus, from Lemma \ref{lem:maxsadded}, we find that the solution crosses $D_{0}$ in finite time. 
		
		Because $\lvert \mu_{1}\rvert, \lvert v\rvert, A \ll 1$, 
		\begin{align*}
			\dot{v} + M_1\dot{s}&= (-\mu_1-p_{11}v^2 + p_{12}A^{2})v-M_2s \\
			&=(-\mu_1-p_{11}v^2+p_{12}A^{2})(-M_1s)-M_2s \\
			&\leq -\frac{M_{2}s}{2}<0. 
		\end{align*}
		This means that the solution belongs to $D_{-}$ for $t>T$, and there exists $\varepsilon>0$ and $\delta>0$ such 
		that $v+M_{1}s<-\varepsilon$ for  $t>T+\delta$. 
		That is,  $s\to 0$ as $t\to \infty$ because $\dot{s}<0$ for $t>T$, and 
		$\widetilde{\mbox{EP}}_2^-$  for $\mu_{1}<0$ 
		(see the phase portrait for $s\equiv 0$ in Fig.\ref{fig:hb+hb_flow} (iii)).  
	\end{proof}
	This theorem indicates that the drift line $\mu_1=0$ is the boundary between the standing and preservation regions. 
	We remark that because $s^*$ and $v^*$ can be arbitrarily small as $\mu_1\to 0$, the maximum 
	value of $A$ tends to $0$ as $\mu_1\to 0$. This implies that 
	no annihilation occurs for sufficiently small $\lvert\mu_1\rvert$ when $\mu_2 >0$ is fixed. 
	
	\subsubsection{Transition from preservation to annihilation --Decreasing $\mu_2$ from $\mu_2^0$ with $\mu_1=-\mu_1^0<0$ being fixed--}\label{sec:m1fix}
	
	Next, we will show that there exists a critical value $\mu_2^{c}$ at which the solution trajectory lies on 
	the stable manifold of $\widetilde{\text{EP}}_{3}^{-}$ (denoted by $\Pi$ in Fig.\ref{fig:yz}). 
	As shown in the phase portrait in Fig.\ref{fig:hb+hb_flow}, it is plausible that $A$ diverges when $\mu_2$ is 
	taken close to $\tilde{\mu}_{2}$ on $T_2$ in Fig.\ref{fig:parameter-move2}. 
	Hence, it follows almost immediately from Theorem \ref{thm:preservation} and the continuity 
	of the solution with respect to the parameters that there exists a critical $\mu_2^c$ where the solution trajectory crosses the stable manifold 
	of $\widetilde{\text{EP}}_{3}^{-}$ between $\mu_{2}^{0}$ and $\tilde{\mu}_{2}$. 
	However, because the reduced ODE is valid in the weak interaction regime, we need to show that 
	$v = O(\lvert\bm{\mu}\rvert^{1/2}), A =O(\lvert\bm{\mu}\rvert^{1/2})$, and $s=O(\lvert\bm{\mu}\rvert)$ for $t>0$ when 
	$\mu_2\in [\mu_2^0, \mu_2^c]$. 
	For this purpose, we introduce an invariant set $X\subset \mathbb{R}^3$ satisfying the property that the orbit diverges once it enters $X$.
	
	Before defining $X$, we examine the number of common points 
	of the curve $A=H(v)$ (recall the definition (\ref{eqn:GH})) and the line $A = -a_{\infty}(v +v_{\text{ep3}}) + A_{\text{ep3}}$ in 
	the region $v\leq -v_{\text{ep3}}$ and $A>0$, 
	where $a_\infty$ is a constant in the interval $(\sqrt{p_{21}/p_{22}}, \sqrt{p_{11}/p_{12}}]$. 
	Because $H(-v_{\text{ep3}}) = A_{\text{ep3}}$, the equation 
	$H(v) = -a_{\infty}(v +v_{\text{ep3}}) +A_{\text{ep3}}$ should have a solution $v = -v_{\text{ep3}}$. 
	We denote the other solution by $-v_{X}$ (see the magnified inset in Fig.\ref{fig:yz} (a)), where 
	\begin{align*}
		v_{X} :=  \frac{p_{22}(a_{\infty}^{2} v_{\text{ep3}}^{2} - 2a_{\infty} v_{\text{ep3}}A_{\text{ep3}} + A_{\text{ep3}}^{2}) - \mu_{2}}
		{(a_{\infty}^{2}p_{22} - p_{21})v_{\text{ep3}}} = O(\lvert \bm{\mu}\rvert^{1/2}). 
	\end{align*}
	We also define
	\begin{equation}\label{eqn:Ax}
		A_{X} := \sqrt{\frac{\mu_{2} +p_{21} v_{X}^{2}}{p_{22}}}= O(\lvert\bm{\mu}\rvert^{1/2}). 
	\end{equation}
	The equation $H(v) = -a_{\infty}(v +v_{\text{ep3}}) + A_{\text{ep3}}$ has a 
	multiple root $v=-v_{\text{ep3}}$ at $\mu_{2} = \mu_{2}^{*}$, where 
	$$
	\mu_{2}^{*} = \left(-p_{21}  + \frac{p_{21}^{2}}{p_{22}a_{\infty}^{2}}\right) v_{\text{ep3}}^{2}. 
	$$
	By direct computation, we have
	\begin{align*}
		\frac{d}{d\mu_{2}}\left(\left.\frac{d H}{dv}\right\lvert_{v=-v_{\text{ep3}}}\right) &= -\frac{(p_{11}p_{22}-p_{12}p_{21}) \mu_{1}}
		{2(-p_{21}\mu_{1}+p_{11}\mu_{2})^{3/2}(-p_{22}\mu_{1}+p_{12}\mu_{2})^{1/2}} >0. 
	\end{align*}
	Thus, $(dH/dv)\lvert_{v=-v_{\text{ep3}}}$ decreases as $\mu_{2}$ is decreased. 
	Therefore, in the region $v<-v_{\text{ep3}}$ and $A>0$, the curve 
	$A = H(v)$ and line $A=-a_{\infty}(v +v_{\text{ep3}}) + A_{\text{ep3}}$ have 
	one common point when $\mu_{2}\in [\mu_2^{*}, \mu_{2}^{0}]$ 
	and 
	two common points when $\mu_{2}\in [\tilde{\mu}_{2}, \mu_{2}^{*})$ [Fig.\ref{fig:yz} (a)].

	Now, we define $X$ by 
	\begin{equation}\label{eqn:defXY}
		\begin{aligned}
			X &:= \{ (v, A, s) \ \lvert\ A \geq \widehat{F}(v), v\leq 0, s\geq 0\},\\
			\widehat{F}(v) &:= \left\{
			\begin{aligned}
				&-a_{\infty}(v +\hat{v}) + \widehat{A},  \text{\ for\ } v\leq -\hat{v},\\
				&\widehat{A},\hspace{2.4cm}  \text{\ for\ } -\hat{v}< v \leq 0,
			\end{aligned}
			\right.
		\end{aligned}
	\end{equation}
	where  
	$(\hat{v}, \widehat{A}) = (v_{\text{ep3}}, A_{\text{ep3}})$ for $\mu_{2}\in (\mu_{2}^{*}, \mu_{2}^{0}]$ and 
	$(\hat{v}, \widehat{A})=  (v_{X}, A_{X})$ for $\mu_{2}\in [\tilde{\mu}_{2}, \mu_{2}^{*}]$ 
	[Fig.\ref{fig:yz} (a)].  
	We will see in the next lemma that $\widehat{F}(v)$ is designed such that $X$ becomes an invariant set. 
	
	Because $\widehat{F}(v)$ is convex and 
	the slope of $A=G(v)$ at $v=-\infty$ is smaller than that at $-a_\infty$, the plane $A=\widehat{F}(v)$ does not have an intersection with 
	the plane $A = G(v)$ for $v \in (-\infty, -\hat{v})$ and the plane
	$A = H(v)$ for $v \in (-\hat{v}, \infty)$ [Fig.\ref{fig:yz} (a)]. 
	
	The boundaries of $X$ become
	\begin{equation*}
		\begin{aligned}
			&\partial^LX  := \left\{(v, A, s) \in X\ \lvert\ A = \widehat{F}(v), v \leq -\hat{v} \right\},\\
			&\partial^RX  := \left\{(v, A, s)\in X \ \lvert\ v = 0, A\geq \widehat{F}(v)\right\},\\
			&\partial^BX  := \left\{(v, A, s)\in X \ \lvert\  A = \widehat{F}(v), v > -\hat{v} \right\},\\
		\end{aligned}
	\end{equation*}
	where $L$, $R$, and $B$ mean left, right, and bottom, respectively [Fig.\ref{fig:yz} (b)]. 
	
	\begin{itemize}
		\item[(S6)] $M_3/M_2> \max\{\sqrt{3p_{21}/{p_{22}}}, 2p_{21}\sqrt{p_{11}p_{12}}/(p_{11}p_{22}-p_{12}p_{21})\}$. 
	\end{itemize}
	Under assumption (S6), we shall show in the next lemma that $X$ becomes an invariant set and that if the orbit enters into $X$, then $A$ has to diverge to $+\infty$. 
	
	\begin{figure}[htbp]
		\centering
		\includegraphics[width=12cm,clip]{./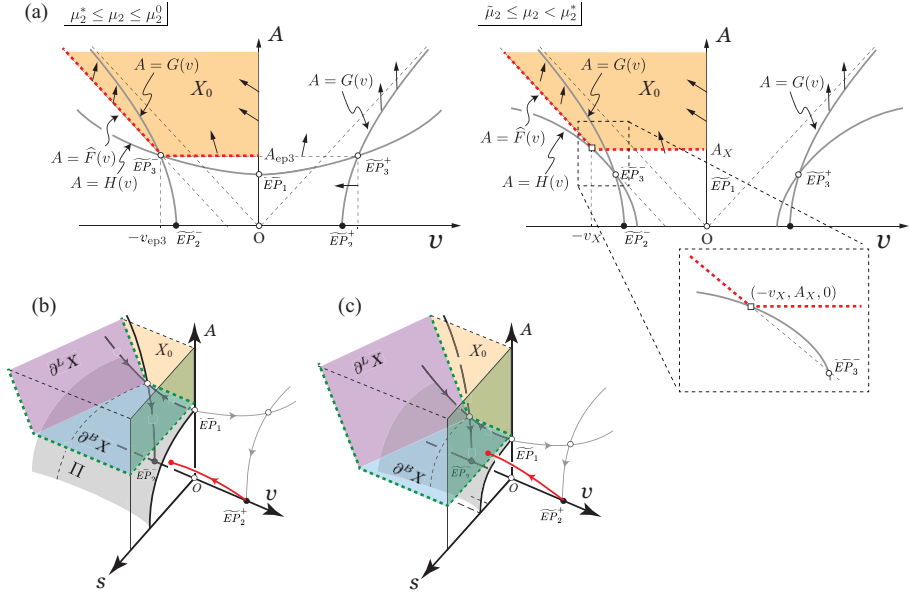}
		\caption{
			(a) The set $X$ is explicitly constructed as in \eqref{eqn:defXY}. 
			The subscript "0" denotes the boundary at $s=0$. 
			(b) $X$ is surrounded by four planes 
			of $X_0$, $\partial^{L}X$, $\partial^R X$, and $\partial^B X$ in $(v,A,s)$ space. 
			The surface $\Pi$ stands for the stable manifold of $\widetilde{\text{EP}}_{3}^{-}$, 
			which is a separator between preservation and annihilation. 
			For large $\mu_2$, the solution trajectory goes under $\Pi$ and reaches $\widetilde{\text{EP}}_{2}^{-}$ (preservation).
			(c) Decreasing $\mu_2$ to $\widetilde{\mu}_2$, the curve $A=H(v)$ 
			goes down. 
		}
		\label{fig:yz}
	\end{figure}

	The following lemma gives a sufficient condition for the divergence of $A$. 
	\begin{lemma}\label{lem:xinv}
		Let assumptions (S1)--(S6) hold. 
		If the solution enters $X$ in finite time, then 
		$A$ diverges. 
	\end{lemma}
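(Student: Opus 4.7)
The goal is twofold: first, establish that $X$ is forward-invariant under the flow of \eqref{eqn:ode-2pulse-p}, and second, show that inside $X$ the variable $A$ must grow without bound (in fact, in finite time). My plan is to check the flow on each boundary piece of $X$ separately, then exploit the structure of $\dot A$ together with the lower bound $A \geq \widehat A$ to obtain blow-up.

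For invariance, note first that the half-space $\{s\geq 0\}$ is invariant because $\dot s = 2\alpha(v+M_{1}s)s$ vanishes on $\{s=0\}$, so only the three lateral boundaries $\partial^{R}X$, $\partial^{B}X$, $\partial^{L}X$ need attention. On $\partial^{R}X$ (where $v=0$) one has $\dot v = -M_{2}s \leq 0$ by (S4), so the flow points into $\{v\leq 0\}$. On $\partial^{B}X$ (the horizontal segment $A=\widehat A$, $-\hat v < v \leq 0$), using the identity $p_{22}\widehat A^{2} = \mu_{2}+p_{21}\hat v^{2}$ (which holds in both parameter regimes defining $(\hat v,\widehat A)$) gives $-\mu_{2}-p_{21}v^{2}+p_{22}\widehat A^{2} = p_{21}(\hat v^{2}-v^{2})\geq 0$; hence $\dot A \geq M_{3}s \geq 0$ by (S4). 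The slanted boundary $\partial^{L}X$ with outward normal proportional to $(-a_\infty,-1)$ in $(v,A)$ requires the inequality $a_\infty \dot v + \dot A \geq 0$ along $A = -a_\infty(v+\hat v)+\widehat A$, $v\leq -\hat v$; I would expand this as
\begin{equation*}
a_\infty(-\mu_{1}-p_{11}v^{2}+p_{12}A^{2})v + (-\mu_{2}-p_{21}v^{2}+p_{22}A^{2})A + (M_{3}-a_\infty M_{2})s
\end{equation*}
and verify positivity. The sign of the cubic part is controlled by the range $a_\infty \in (\sqrt{p_{21}/p_{22}},\sqrt{p_{11}/p_{12}}\,]$, which makes the coefficient of $|v|^{3}$ and $A^{3}$ terms both non-negative for $v\leq 0$ and $A\geq 0$; the quadratic (mixed) term and the linear $\mu_{i}$ contributions are then absorbed by (S6), which forces $M_{3}-a_\infty M_{2}>0$ and also controls the coefficients arising from $p_{ij}$. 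Choosing $a_\infty$ in the stated interval optimally is what makes (S6) the exact sufficient condition.

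For divergence, once the trajectory lies in $X$ we have $A\geq \widehat A>0$ maintained, and inside $X$ the inequality $p_{22}A^{2}-p_{21}v^{2}\geq \mu_{2}$ holds everywhere because $A\geq \widehat F(v) > H(v)$ except possibly at the single corner $(v,A)=(-\hat v,\widehat A)$. Therefore $\dot A \geq M_{3}s\geq 0$ throughout $X$, so $A$ is non-decreasing; combined with monotonicity $\dot v \leq -M_{2}s + (-\mu_{1}-p_{11}v^{2}+p_{12}A^{2})v$ driving $v$ to decrease (since $v\leq 0$ and the coefficient of $v$ becomes positive as $A$ grows), the orbit drifts deeper into $X$. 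A short quantitative argument then shows that $A$ eventually exceeds $2\sqrt{(\mu_{2}+p_{21}v^{2})/p_{22}}$ strictly, after which
\begin{equation*}
\dot A \geq (p_{22}A^{2}-\mu_{2}-p_{21}v^{2})A \geq \tfrac{3}{4}p_{22}A^{3},
\end{equation*}
and the standard comparison with $\dot y = cy^{3}$ yields blow-up of $A$ in finite time.

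The hardest step, and the one that genuinely uses (S6), is the invariance check on $\partial^{L}X$. The balance between the destabilizing cubic term $p_{12}A^{2}v$ (which pushes $v$ more negative, taking the orbit out of the planar projection of $X$) and the stabilizing $p_{22}A^{3}$ term has to be resolved uniformly along the entire slanted segment, and the constants $M_{2}$ and $M_{3}$ that appear from the weak-interaction tail must cooperate through the condition $M_{3}/M_{2} > 2p_{21}\sqrt{p_{11}p_{12}}/(p_{11}p_{22}-p_{12}p_{21})$; the other bound $M_{3}/M_{2}>\sqrt{3p_{21}/p_{22}}$ is needed to choose $a_\infty$ so that both the cubic inequality and the $s$-coefficient $M_{3}-a_\infty M_{2}$ are simultaneously non-negative. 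Making this a clean pointwise estimate, rather than just an asymptotic one valid for $|v|$ large, is the main technical burden of the proof.
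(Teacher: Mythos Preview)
Your boundary checks on $\partial^{R}X$ and $\partial^{B}X$ are fine and match the paper. The real issue is your treatment of $\partial^{L}X$, which you correctly flag as the hardest step but then resolve with the wrong mechanism.

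The paper's argument on $\partial^{L}X$ is much shorter than what you attempt, and rests on a geometric fact you have overlooked: by the \emph{construction} of $\widehat F$ (the paragraph preceding the definition of $X$), the slanted segment $A=\widehat F(v)$ for $v\le -\hat v$ lies strictly between the null-curves $A=H(v)$ and $A=G(v)$. Consequently, on $\partial^{L}X$ one has simultaneously
\[
-\mu_{1}-p_{11}v^{2}+p_{12}A^{2}<0 \quad\text{and}\quad -\mu_{2}-p_{21}v^{2}+p_{22}A^{2}>0,
\]
so that (with $v<0$, $A>0$) each of the two cubic summands in $a_{\infty}\dot v+\dot A$ is \emph{individually} positive. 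Only the residual $s$-term $(M_{3}-a_{\infty}M_{2})s$ remains, and this is where (S6) enters: it guarantees that $a_{\infty}$ can be chosen in the required interval $(\sqrt{p_{21}/p_{22}},\sqrt{p_{11}/p_{12}}\,]$ with $M_{3}-a_{\infty}M_{2}>0$. That is all (S6) does in this lemma.

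Your approach instead tries to prove positivity of the full polynomial $a_{\infty}(-\mu_{1}-p_{11}v^{2}+p_{12}A^{2})v+(-\mu_{2}-p_{21}v^{2}+p_{22}A^{2})A$ along the line by expanding and controlling coefficients. You then assert that ``the quadratic (mixed) term and the linear $\mu_{i}$ contributions are absorbed by (S6)''. This is the gap: (S6) is a condition on the ratio $M_{3}/M_{2}$ and says nothing about the $p_{ij}$ or $\mu_{i}$, so it cannot absorb those terms. The leading $|v|^{3}$ coefficient is indeed positive (it equals $a_\infty(p_{11}p_{22}-p_{12}p_{21})$ divided by a positive quantity, using (S5)), but the lower-order terms involving $c=\widehat A-a_{\infty}\hat v$, $\mu_{1}$, $\mu_{2}$ are not handled by anything you wrote. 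Your polynomial approach can be salvaged, but only by going back to the fact that the line sits between $H$ and $G$ --- which is exactly the observation that makes the paper's two-line proof work without any polynomial bookkeeping.

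Your divergence argument is broadly in the same spirit as the paper's (non-decrease of $A$ in $X$, then a lower bound of the form $\dot A\gtrsim cA^{3}$), and the finite-time blow-up you obtain is a mild strengthening. The point that deserves care --- and that you gloss over with ``a short quantitative argument'' --- is that $v$ may also run to $-\infty$, so $H(v)$ grows; the reason $A$ nevertheless outpaces $H(v)$ is again the invariance $A\ge \widehat F(v)$ together with $a_{\infty}>\sqrt{p_{21}/p_{22}}$.
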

	\begin{proof} 
		We investigate the flows at the boundary of $X$. \\
		\noindent\\
		Case 1: $\partial^{L}X$\\
		The normal vector at a point on $\partial ^{L} X$ is $(a_{\infty}, 1, 0)$ and 
		$(-\mu_{1} - p_{11}v^{2} + p_{12} A^{2})v>0$
		and $(-\mu_{2} - p_{21}v^{2} + p_{22} A^{2})A>0$ on $\partial^{L}X$. Thus, we obtain 
		\begin{align*}
			(a_{\infty}, 1, 0)\cdot (\dot{v}, \dot{A}, \dot{s}) 
			& = a_{\infty}[(-\mu_{1} - p_{11}v^{2} + p_{12} A^{2})v - M_{2} s]\\
			& \qquad + (-\mu_{2} - p_{21}v^{2} + p_{22} A^{2})A + M_{3}s\\
			&> M_{3} - a_{\infty}M_{2}>0,
		\end{align*}
		where we used (S6). This implies that the solution trajectory directs to the interior of $X$ on $\partial^{L}X$. \\\\
		Case 2: $\partial^{R}X$\\
		For a point on $\partial^{R}X$, because $v=0$, we have 
		\begin{align*}
			\dot{v} &= (-\mu_1-p_{11}v^2 + p_{12}A^{2})v-M_2s \\
			&= -M_{2}s<0. 
		\end{align*}
		\\
		Case 3: $\partial^{B}X$\\
		For $(v, A, s)$ on $\partial^BX$, because $(-\mu_2-p_{21}v^2+p_{22}A^2)A>0$ on $\partial^BX$, we have 
		\begin{equation*}
			\begin{aligned}
				(0, 1, 0)\cdot(\dot{v}, \dot{A}, \dot{s}) 
				&=  (-\mu_2-p_{21}v^2+p_{22}A^2)A + M_3 s\\
				&> M_3 s >0. 
			\end{aligned}
		\end{equation*} 
		Now, it is clear that $X$ is invariant. 
		
		Next, we show that $A$ diverges if the solution once enters $X$. 
		Suppose that the solution trajectory touches the boundary of $X$ at $t=t'<\infty$. 
		Because $s(t')>0$ and $\dot{A}(t') \geq  M_{3}s(t')$, there exist $\varepsilon>0$ and $\delta >0$
		such that $A(t)> A(t') +  \varepsilon$ for $t>t'+\delta$. 
		Because $A(t')>0$ and $(-\mu_{2} - p_{21}v^{2} + p_{22}A^{2})A > 0$ for $t>t'+\delta$, we have
		\begin{align*}
			\dot{A}(t) &= (-\mu_{2} - p_{21}v^{2}(t) + p_{22} A^{2}(t))A(t) + M_{3}s(t) \\
			&\geq [-\mu_{2} - p_{21}v^{2}(t) + p_{22}(A(t') + \varepsilon)^{2}](A(t')+\varepsilon)\\
			&>  [-\mu_{2} - p_{21}v^{2}(t) + p_{22}A(t')^{2}]A(t')+p_{22}\varepsilon^{2}A(t')\\
			&> p_{22}\varepsilon^{2}A(t').
		\end{align*}
		Thus, we find $A\to\infty$ as $t$ is increased. 
	\end{proof}

	\begin{lemma}\label{lem:vmin}
		If $\mu_{2}$ is in the preservation region, $v=O(\lvert\bm{\mu}\rvert^{1/2})$ and $s = O(\lvert\bm{\mu}\rvert)$ for all $t>0$. 
	\end{lemma}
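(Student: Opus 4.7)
Plan. The idea is to bootstrap the a\-priori estimate of Lemma \ref{lem:maxsadded} using the invariant\-set machinery of Lemma \ref{lem:xinv}. In the preservation regime the orbit converges to $\widetilde{\mbox{EP}}_2^{-}$, where $A=0$; consequently $A(t)$ stays bounded, which by Lemma \ref{lem:xinv} forces $(v(t),A(t),s(t))\notin X$ for every $t\ge 0$. Translating this geometric statement through the definition of $\widehat{F}$ gives the \emph{a\-priori} bound $A(t)<\widehat{F}(v(t))$ whenever $v(t)\le 0$, and this bound is $O(\lvert\bm{\mu}\rvert^{1/2})$ provided $\lvert v(t)\rvert = O(\lvert\bm{\mu}\rvert^{1/2})$. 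For the initial phase where $v(t)>0$, the orbit is launched from the unstable manifold of $\widetilde{\mbox{EP}}_2^{+}$, and a direct inspection of the Jacobian at $\widetilde{\mbox{EP}}_2^{+}$ shows that the unique unstable eigenvector has both the $A$\-component and the $(v-v_{\text{ep2}})$\-component proportional to the $s$\-component; so in this phase $A$ is automatically small as long as $s$ is.

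With an $A$\-barrier $\widetilde{A}=O(\lvert\bm{\mu}\rvert^{1/2})$ in hand, I would close the estimate by a continuity bootstrap. Define
\[ \tau^{*} := \sup\{\tau\ge 0 \ \lvert\ \lvert v(t)\rvert\le C_1\lvert\bm{\mu}\rvert^{1/2},\ A(t)\le C_2\lvert\bm{\mu}\rvert^{1/2},\ s(t)\le C_3\lvert\bm{\mu}\rvert \text{ for all } t\in[0,\tau]\} \]
with constants $C_1,C_2,C_3$ large enough that the initial condition satisfies these bounds strictly. On $[0,\min(\tau^{*},T')]$, Lemma \ref{lem:maxsadded} applied with $\widetilde{A}$ as above produces \emph{strictly} smaller bounds $\lvert v\rvert = O(\lvert\bm{\mu}\rvert^{1/2})$ and $s = O(\lvert\bm{\mu}\rvert)$, and reinjecting these into the $\widehat{F}$\-barrier tightens $A$ as well; continuity therefore forces $\tau^{*}>T'$. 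For $t>T'$ one has $v<-v_{\text{ep3}}$ with $s=O(\lvert\bm{\mu}\rvert)$, so the orbit sits in $D_{-}$, $\dot{s}<0$, and it decays onto the $s=0$ plane; the reduced flow there is \eqref{eqn:ode-1pulse-p}, whose stable node $\text{EP}_2^{-}$ in region (iii) of Fig.\ref{fig:hb+hb_flow} lies at distance $O(\lvert\bm{\mu}\rvert^{1/2})$ from the origin, so a Gronwall\-type closeness argument extends the bounds past $T'$ and yields $\tau^{*}=+\infty$.

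The main obstacle I anticipate is the stitching around $t=T$: $v$ changes sign there, the $X$\-avoidance only controls $A$ in the $v\le 0$ regime, and the local $s$\-estimate \eqref{eqn:estimofs} is valid only on a specific sub\-interval. Handling the $v>0$ launch phase, the crossing of $D_0$ at $T$, the descent in $D_{-}$ up to $T'$, and the final convergence to $\widetilde{\mbox{EP}}_2^{-}$ as one continuous bootstrap, without ever losing the uniform $O(\lvert\bm{\mu}\rvert^{1/2})$\-control on $A$, is the delicate point. On each of these four pieces, however, one of the already\-established lemmas or the geometry of $X$ supplies the required bound, so no new technical ingredient is needed.
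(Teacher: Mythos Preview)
Your overall strategy—use Lemma \ref{lem:xinv} to rule out entry into $X$, translate this into an $A$-barrier, and feed that into Lemma \ref{lem:maxsadded}—matches the paper's. But two of the four pieces you identify are handled differently in the paper, and in each case your version has a real gap.

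\textbf{The $v>0$ launch phase.} Your control on $A$ here comes from the tangent direction of the unstable manifold at $\widetilde{\mbox{EP}}_2^{+}$. That is only a \emph{local} statement: it tells you $A$ is comparable to $s$ in a neighbourhood of the equilibrium, but says nothing once the orbit has left that neighbourhood while $v$ is still positive. Your bootstrap cannot close this, because the $\widehat{F}$-barrier from $X$-avoidance is only defined for $v\le 0$, so on the whole strip $\{v>0\}$ you have no independent mechanism producing a strictly-better $A$ bound to reinject. The paper instead uses a global phase-plane barrier: it observes that on the nullcline surface $A=G(v)$ inside $\{v>0\}$ one has $\dot v=-M_2s<0$ and $\dot A>0$, so if $A$ ever reached $G(v)$ the orbit would be forced either to diverge in $\{v>0\}$ or to cross $\partial^R X$ into $X$—both contradicting preservation. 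This yields $A<\widehat A$ for the entire $v>0$ phase with no bootstrap.

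\textbf{The phase $t>T'$.} Your ``Gronwall-type closeness to the $s=0$ flow'' is not enough: preservation only says the orbit eventually converges to $\widetilde{\mbox{EP}}_2^{-}$, not that the transient stays within $O(\lvert\bm\mu\rvert^{1/2})$. The paper makes this quantitative by (i) showing $s(t)<s(T')\exp(-\alpha v_{\text{ep3}}(t-T'))$ while $v<-v_{\text{ep3}}$, and (ii) using $X$-avoidance once more to get the sign $(-\mu_1-p_{11}v^2+p_{12}A^2)v>0$ on that interval, so that
\[
v(t)-v(T')>\int_{T'}^{\infty}\!\!-M_2\,s(T')e^{-\alpha v_{\text{ep3}}(t-T')}\,dt
=-\frac{M_2\,s(T')}{\alpha v_{\text{ep3}}}=O(\lvert\bm\mu\rvert^{1/2}).
\]
This explicit integration is the missing ingredient that replaces your Gronwall sketch; without it there is no bound preventing $v$ from overshooting far below $-v_{\text{ep3}}$.
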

	\begin{proof}
		Let us define the following set:
		$$
		Z:= \{(v, A, s)\ \lvert\ v> 0, s\geq 0\}.
		$$
		Firstly, we show that $A<\widehat{A}$ in $Z$. 
		Assume that the solution trajectory touches the plane $A=G(v)$  in $Z$. 
		Because $\dot{v}=-M_{2}s<0$ and $\dot{A}>0$ on $A=G(v)$ in $Z$, 
		the solution does not cross the plane $A=G(v)$ in $Z$. 
		Thus, the solution diverges in $Z$ or crosses 
		$\partial^{R}X$ and enters $X$. For both cases, $A$ diverges, which contradicts the condition that 
		$\mu_{2}$ is taken in the preservation regime. 
		Thus, $A<\widehat{A}$  for $0<t< \widetilde{T}$, where $\widetilde{T}$ is the time when  $v$ touches the plane $v=0$. 
		
		Secondly, we show that $v=O(\lvert\bm{\mu}\rvert^{1/2})$ for $t\geq \widetilde{T}$. 
		Because $\mu_{2}$ is taken in the preservation regime, $A<\widehat{A}$ for $t<T'$ ($T'$ is the time 
		when $v$ attains $-v_{\text{ep3}}$) from Lemma \ref{lem:xinv}. 
		It follows from Lemma \ref{lem:maxsadded} that $s(t) = O(\lvert\bm{\mu}\rvert)$ for $t\leq T'$, especially $s(T') =  O(\lvert\bm{\mu}\rvert)$. 
		We estimate the minimum value of $v$ for $t>T'$. Let us define $T''$ by the time $t$ when $v$ reaches $-v_{\text{ep3}}$ again, 
		where $T''=\infty$ if $v$ does not. For $t\in (T', T'']$, it holds that 
		\begin{align*}
			\dot{s} &< 2\alpha\left(-v_{\text{ep3}} + M_{1}s\right)s \\
			&< -\alpha v_{\text{ep3}}s, 
		\end{align*}
		where we used $s\ll \lvert v\rvert$ because $s=O(\lvert \bm{\mu}\rvert)$ and $\lvert v\rvert>v_{\text{ep3}} = O(\lvert\bm{\mu}\rvert^{1/2})$ for $t\in (T', T'']$. 
		Therefore, it follows that
		\begin{equation}\label{eqn:sesti}
			s(t) < s(T')\exp(-\alpha v_{\text{ep3}}t), \qquad t\in (T', T'']. 
		\end{equation}
		Because $\mu_{2}$ is taken in the preservation regime, from Lemma \ref{lem:xinv}, 
		the solution does not enter $X$ at $t=T'$. 
		That is, 
		\begin{align*}
			(-\mu_{1}-p_{11}v^{2}+p_{12}A^{2})v > 0
		\end{align*}
		should hold as long as $v<-v_{\text{ep3}}$ holds for $T'<t\leq T''$ from the definition of $X$ (see Fig.\ref{fig:yz}). 
		Because $s(T') = O(\lvert\bm{\mu}\rvert)$ (Lemma \ref{lem:maxsadded}) and $v_{\text{ep3}} = O(\lvert\bm{\mu}\rvert^{1/2})$, 
		by using \eqref{eqn:sesti}, we have the 
		following estimation for $t\in (T', T'']$:
		\begin{align*}
			v(t) - v(T')  &= \int_{T'}^{t}\dot{v} dt\\
			&= \int_{T'}^{t}(-\mu_{1} - p_{11}v^{2} + p_{12}A^{2})v - M_{2} s dt\\
			&> \int_{T'}^{t}- M_{2} s(T')\exp(-\alpha v_{\text{ep3}}t) dt\\
			&>  \int_{T'}^{\infty}- M_{2} s(T')\exp(-\alpha v_{\text{ep3}}t) dt
			= O(\lvert \bm{\mu}\rvert^{1/2}).
		\end{align*}
		That is,  it follows that 
		\begin{align*}
			v(t) > -v_{\text{ep3}} + O(\lvert\bm{\mu}\rvert^{1/2}). 
		\end{align*}
		Thus, $v(t) = O(\lvert\bm{\mu}\rvert^{1/2})$ for $t\in (T', T'']$. 
		
		If $v$ attains $-v_{\text{ep3}}$ again at $t=T'''>T''$, we can show that $v=O(\lvert\bm{\mu}\rvert^{1/2})$ for $t>T'''$ by using the 
		same argument. Therefore, $v=O(\lvert\bm{\mu}\rvert^{1/2})$ for $t\geq \widetilde{T}$, and we find 
		$v=O(\lvert\bm{\mu}\rvert^{1/2})$ for $t>0$.
	\end{proof}
	
	From these lemmas, we find that the variables $v$, $A$, and $s$ are sufficiently small as long as 
	$\mu_{2}$ belongs to the preservation regime. 
	\begin{lemma}\label{prop:3}
		Let assumptions (S1)--(S6) hold. For $\mu_{2}\in (\tilde{\mu}_{2}, \mu_{2}^{0})$, 
		the unstable manifold emanating from  $\widetilde{\mbox{EP}}_3^{-}$ consists of two parts: one-half of it converges to $\widetilde{\text{EP}}_{2}^{-}$ and the other half enters $X$. 
		That is, $\widetilde{\mbox{EP}}_3^{-}$ is a separator between preservation and annihilation. 
	\end{lemma}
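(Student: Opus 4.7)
The plan is to exploit the invariance of the plane $\{s=0\}$ and reduce the analysis of $W^u(\widetilde{\mbox{EP}}_3^-)$ to the two-dimensional Hopf--Hopf normal form \eqref{eqn:ode-1pulse-p}. I linearize \eqref{eqn:ode-2pulse-p} at $\widetilde{\mbox{EP}}_3^-=(-v_{\text{ep3}},A_{\text{ep3}},0)$; since $\dot s\equiv 0$ on $\{s=0\}$, the third row of the Jacobian at this point is $(0,0,-2\alpha v_{\text{ep3}})$, so the matrix is block-upper-triangular with one eigenvalue equal to $-2\alpha v_{\text{ep3}}<0$ and the other two equal to the eigenvalues of the $2\times 2$ linearization of \eqref{eqn:ode-1pulse-p} at $\mbox{EP}_3^-$. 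Using the equilibrium relations $-\mu_1-p_{11}v_{\text{ep3}}^2+p_{12}A_{\text{ep3}}^2=0$ and $-\mu_2-p_{21}v_{\text{ep3}}^2+p_{22}A_{\text{ep3}}^2=0$, the determinant of this block equals $-4v_{\text{ep3}}^2 A_{\text{ep3}}^2(p_{11}p_{22}-p_{12}p_{21})<0$ by (S5), so the block is a saddle and $\widetilde{\mbox{EP}}_3^-$ is a 3D saddle with a one-dimensional unstable and a two-dimensional stable manifold.

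Next I observe that any eigenvector associated with an eigenvalue $\lambda\neq -2\alpha v_{\text{ep3}}$ has vanishing $s$-component (by reading off the third row of $J-\lambda I$); in particular the unstable eigenvector lies in $\{s=0\}$. Since $\{s=0\}$ is invariant under \eqref{eqn:ode-2pulse-p}, the entire $W^u(\widetilde{\mbox{EP}}_3^-)$ lies in this plane and coincides with the planar unstable manifold of $\mbox{EP}_3^-$ for \eqref{eqn:ode-1pulse-p}. I then invoke the phase portraits for regions (iii)--(iv) in Fig.\ref{fig:hb+hb_flow} (the simple case of the Hopf--Hopf unfolding under (S5), see \cite{kuznetsov}): $\mbox{EP}_2^\pm$ are the only finite attractors and $\mbox{EP}_3^\pm$ are the only saddles with a bounded stable manifold. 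A standard planar argument, using the nullclines $A=G(v)$ and $A=H(v)$ from \eqref{eqn:GH} as trapping curves, shows that one branch of $W^u(\mbox{EP}_3^-)$ enters the basin of $\mbox{EP}_2^-$, so in 3D it converges to $\widetilde{\mbox{EP}}_2^-$; the other branch must escape every compact set, and since for large $A$ one has $\dot A\sim p_{22}A^3$ while $\dot v\sim -p_{11}v^3$ keeps $v$ bounded, this branch has $A(t)\to +\infty$ in finite time.

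It remains to verify that the divergent branch actually enters $X$. Since it lies in $\{s=0\}$ with $v\le 0$ and $A$ monotonically exceeds $A_{\text{ep3}}$, it suffices to show that it crosses the graph $A=\widehat F(v)$ built in \eqref{eqn:defXY}. The slope $a_\infty\in(\sqrt{p_{21}/p_{22}},\sqrt{p_{11}/p_{12}}\,]$ is chosen precisely so that the line $A=-a_\infty(v+\hat v)+\widehat A$ lies strictly between the asymptotic directions of the nullclines $A=G(v)$ and $A=H(v)$ outside a neighborhood of $\mbox{EP}_3^-$; the divergent branch emanates into the region $\{A>H(v)\}$ where $\dot A>0$, so it cannot escape to $v\to -\infty$ without first crossing $A=\widehat F(v)$, at which moment the orbit enters $X\cap\{s=0\}\subset X$. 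Lemma \ref{lem:xinv} then yields $A\to\infty$, i.e.\ annihilation, and the two branches of $W^u(\widetilde{\mbox{EP}}_3^-)$ realize the two opposite outcomes, so the two-dimensional stable manifold $W^s(\widetilde{\mbox{EP}}_3^-)$ locally separates preservation from annihilation. The main obstacle is this last geometric step: excluding that the divergent branch could slip out to $v\to -\infty$ along the $v$-nullcline without crossing $A=\widehat F(v)$. This is precisely where the careful choice of $a_\infty$ between the two nullcline slopes, together with assumption (S6) which guarantees the inward-pointing vector field on $\partial^L X$ used in Lemma \ref{lem:xinv}, enter decisively.
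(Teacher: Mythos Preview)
Your overall architecture matches the paper's: reduce to the invariant plane $\{s=0\}$, identify the $2\times 2$ block as a saddle via $\det=-4v_{\text{ep3}}^2A_{\text{ep3}}^2(p_{11}p_{22}-p_{12}p_{21})<0$, and argue that the two halves of the one-dimensional unstable manifold realize preservation and annihilation respectively. Your observation that the unstable eigenvector has zero $s$-component, so that $W^u\subset\{s=0\}$, is correct and is exactly what the paper uses.

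The gap is in the last step. First, your asymptotic claim ``$\dot v\sim -p_{11}v^3$ keeps $v$ bounded'' is wrong: for large $A$ and moderate $v$ the dominant term in $\dot v=(-\mu_1-p_{11}v^2+p_{12}A^2)v$ is $p_{12}A^2v$, not $-p_{11}v^3$, and for $v<0$ this drives $v$ to $-\infty$, not toward zero. So you cannot conclude that $v$ remains bounded along the divergent branch from this estimate, and hence you cannot conclude that the branch eventually lies above the piecewise-linear graph $A=\widehat F(v)$ (whose height grows like $a_\infty|v|$) merely from $A\to\infty$. Second, the assertion ``the divergent branch emanates into the region $\{A>H(v)\}$'' is precisely what needs to be proved: at $\widetilde{\text{EP}}_3^-$ both nullclines pass through the point, and nothing in your argument pins down on which side of $G$ (or of $\widehat F$) the unstable eigendirection lies. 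Citing the qualitative Hopf--Hopf portrait does not settle this, because entering $X$ requires a quantitative comparison with the specific boundary $\widehat F$.

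The paper closes exactly this gap by an explicit eigenvector computation. It writes the unstable eigenvector as $(1,m,0)^T$ with
\[
m=-\frac{p_{22}A_{\text{ep3}}^2+p_{11}v_{\text{ep3}}^2+\sqrt{(p_{22}A_{\text{ep3}}^2+p_{11}v_{\text{ep3}}^2)^2-4p_{12}p_{21}v_{\text{ep3}}^2A_{\text{ep3}}^2}}{2p_{12}v_{\text{ep3}}A_{\text{ep3}}}
\]
and then proves, using only (S5), the strict inequality $\left.\dfrac{dG}{dv}\right|_{v=-v_{\text{ep3}}}-m>0$ via the elementary estimate $(p_{22}+p_{11}\sigma^2)^2-4p_{12}p_{21}\sigma^2>(p_{22}-p_{11}\sigma^2)^2$ with $\sigma=v_{\text{ep3}}/A_{\text{ep3}}$. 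This shows that the unstable eigendirection is strictly steeper than the $v$-nullcline at $\widetilde{\text{EP}}_3^-$, so the ``up-left'' half immediately enters $\{A>G(v),\,v<-v_{\text{ep3}}\}\subset X$ (the inclusion having been arranged earlier when $\widehat F$ was constructed), while the ``down-right'' half enters the basin of $\widetilde{\text{EP}}_2^-$. That slope comparison is the missing ingredient in your argument; once you have it, the invariance of $X$ from Lemma~\ref{lem:xinv} finishes the proof without any asymptotic estimate on $\dot v$.
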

	\begin{proof}
		The Jacobian matrix corresponding to $\widetilde{\text{EP}}_{3}^{-}$ is given by 
		\begin{equation*}
			J := 
			\begin{pmatrix}
				-2p_{11}v_{\text{ep3}}^{2} & -2p_{12}v_{\text{ep3}}A_{\text{ep3}} & -M_{2}\\
				2p_{21}v_{\text{ep3}}A_{\text{ep3}} & 2p_{22}A_{\text{ep3}}^{2} & M_{3}\\
				0 & 0 & -2\alpha v_{\text{ep3}}
			\end{pmatrix}
			.
		\end{equation*}	
		The matrix $J$ has eigenvalues 
		\begin{align*}
			\lambda_{1} &= p_{22}A_{\text{ep3}}^{2}-p_{11}v_{\text{ep3}}^{2} 
			+\sqrt{ (p_{22}A_{\text{ep3}}^{2}-p_{11}v_{\text{ep3}}^{2})^{2} + 4(p_{11}p_{22} - p_{12}p_{21})v_{\text{ep3}}^{2}A_{\text{ep3}}^{2} },\\
			\lambda_{2} &= p_{22}A_{\text{ep3}}^{2}-p_{11}v_{\text{ep3}}^{2} 
			-\sqrt{ (p_{22}A_{\text{ep3}}^{2}-p_{11}v_{\text{ep3}}^{2})^{2} + 4(p_{11}p_{22} - p_{12}p_{21})v_{\text{ep3}}^{2}A_{\text{ep3}}^{2} },\\
			\lambda_{3} &= -2\alpha v_{\text{ep3}}, 
		\end{align*}
		where the signs of the eigenvalues are 
		$(\lambda_{1}, \lambda_{2}, \lambda_{3}) = (+, -, -)$ 
		for $\mu_{2}\in (\tilde{\mu}_{2}, \mu_{2}^{0})$. 
		
		We show that unstable manifolds emanating from $\widetilde{\text{EP}}_{3}^{-}$ enter
		the region $Y_{0}$ for $\mu_{2}\in (\tilde{\mu}_{2}, \mu_{2}^{0})$, where $Y_{0}$ is a region surrounded by 
		the plane $A = G(v)$ and $v = -v_{\text{ep3}}$ in the $s= 0$ plane (Fig.\ref{fig:regionY}). That is, 
		one of the unstable manifolds of $\widetilde{\text{EP}}_{3}^{-}$ belongs to $X$, and the other converges to $\widetilde{\text{EP}}_{2}^{-}$. 
		
		For $\mu_{2}\in (\tilde{\mu}_{2}, \mu_{2}^{0})$, or $A_{\text{ep3}}\not= 0$, 
		the eigenvector corresponding to $\lambda_{1}$ 
		is given by $(1, m, 0)^{T}$, where 
		$$
		m = -\frac{p_{22}A_{\text{ep3}}^{2}+p_{11}v_{\text{ep3}}^{2}
			+\sqrt{ (p_{22}A_{\text{ep3}}^{2}-p_{11}v_{\text{ep3}}^{2})^{2} + 4p_{12}p_{21}v_{\text{ep3}}^{2}A_{\text{ep3}}^{2} }}{2p_{12}v_{\text{ep3}}A_{\text{ep3}}}
		<0.
		$$
		Because $-\mu_{1} - p_{11}v_{\text{ep3}}^{2} +  p_{12}A_{\text{ep3}}^{2} = 0$, it holds that
		
		\begin{equation}\label{eqn:fd}
			\left. \frac{dG}{dv}\right\lvert_{v=-v_{\text{ep3}}} = -\frac{p_{11}v_{\text{ep3}}}{\sqrt{p_{12}(\mu_{1} + p_{11}v_{\text{ep3}}^{2})}} 
			= 
			-\frac{p_{11}v_{\text{ep3}}}{p_{12}A_{\text{ep3}}} = -\frac{p_{11}}{p_{12}}\sigma,
		\end{equation}
		where $\sigma = v_{\text{ep3}}/A_{\text{ep3}}>0$. 
		In addition, 
		\begin{align}
			\notag
			m &= -\frac{p_{22}A_{\text{ep3}}^{2}+p_{11}v_{\text{ep3}}^{2}
				+\sqrt{ (p_{22}A_{\text{ep3}}^{2}+p_{11}v_{\text{ep3}}^{2})^{2} - 4p_{12}p_{21}v_{\text{ep3}}^{2}A_{\text{ep3}}^{2} }}{2p_{12}v_{\text{ep3}}A_{\text{ep3}}}\\
			& = -\frac{p_{22}+p_{11}\sigma^{2}+ \sqrt{(p_{22}+p_{11}\sigma^{2})^{2} - 4p_{12}p_{21}\sigma^{2}}}{2p_{12}\sigma}.
			\label{eqn:m}
		\end{align}
		By using assumption (S5) ($p_{11}p_{22} - p_{12}p_{21} > 0$), it holds that 
		\begin{align*}
			(p_{22}+p_{11}\sigma^{2})^{2} - 4p_{12}p_{21}\sigma^{2} &= p_{22}^{2} + 2p_{11}p_{22}\sigma^{2} + p_{11}^{2}\sigma^{4} - 4p_{12}p_{21}\sigma^{2}\\
			&> p_{22}^{2} + 2p_{11}p_{22}\sigma^{2} + p_{11}^{2}\sigma^{4} - 4p_{11}p_{22}\sigma^{2}\\
			&= (p_{22} - p_{11}\sigma^{2})^{2}. 
		\end{align*}
		Thus, 
		\begin{align}
			\notag
			p_{22}+p_{11}\sigma^{2}+ \sqrt{(p_{22}+p_{11}\sigma^{2})^{2} - 4p_{12}p_{21}\sigma^{2}}& >
			p_{22}+p_{11}\sigma^{2}+ \lvert p_{22} - p_{11}\sigma^{2}\rvert\\
			& = 
			\begin{cases}
				2p_{22}\quad &\text{if\ } p_{22} - p_{11}\sigma^{2}>0,\\
				2p_{11}\sigma^{2}\quad &\text{if\ } p_{22} - p_{11}\sigma^{2}\leq 0.\\
			\end{cases}
			\label{eqn:notecase}
		\end{align}
		From \eqref{eqn:fd},\eqref{eqn:m}, and \eqref{eqn:notecase}, we have the following:\\
		Case 1: If $p_{22} - p_{11}\sigma^{2}>0$, 
		\begin{align*}
			\left. \frac{dG}{dv}\right\lvert_{v=-v_{\text{ep3}}} - m > -\frac{p_{11}}{p_{12}}\sigma +\frac{p_{22}}{p_{12}\sigma} = \frac{1}{p_{12}\sigma}(p_{22}-p_{11}\sigma^{2} )>0.
		\end{align*}
		Case 2: If $p_{22} - p_{11}\sigma^{2}\leq 0$, 
		\begin{align*}
			\left. \frac{dG}{dv}\right\lvert_{v=-v_{\text{ep3}}}  - m > -\frac{p_{11}}{p_{12}}\sigma +\frac{p_{11}}{p_{12}}\sigma = 0. 
		\end{align*}
		Therefore, we obtain $(dG/dv)_{v=-v_{\text{ep3}}} - m > 0$. This means that the unstable manifolds emanating from $\widetilde{\text{EP}}_{3}^{-}$ enter
		$Y_{0}$. 
	\end{proof}
	
	\begin{figure}[htbp]
		\centering
		\includegraphics[width=12cm,clip]{./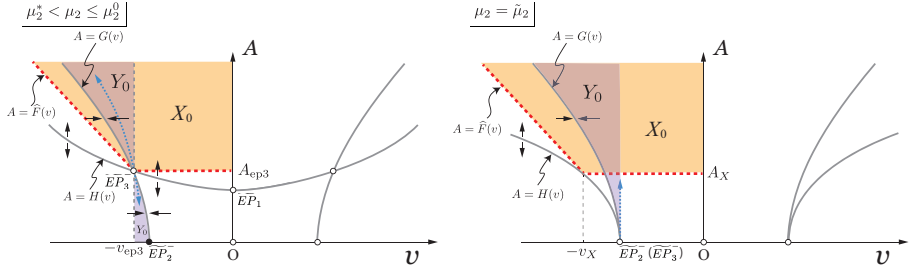}
		\caption{Region $Y_{0}:=\{ (v, A, 0) \ \lvert\ (A - G(v))(v+v_{\text{ep3}})\leq 0\}$.  The dashed arrows
			indicate the unstable manifolds of $\widetilde{\text{EP}}_{3}^{-}$.}
		\label{fig:regionY}
	\end{figure}
	
	\begin{theorem}[Traveling breather as a separator between preservation and annihilation]\label{thm:separator}
		Let assumptions (S1)--(S6) hold. If $\mu_1=-\mu_1^0$ and $\mu_2$ is decreased from $\mu_2^0$, then the solution 
		of (\ref{eqn:ode-2pulse-p}) converges to $\widetilde{\mbox{EP}}_3^{-}$ at some $\mu_2=\mu_2^c 
		\in (\tilde \mu_2, \mu_2^0)$, namely, the orbit lies on the stable manifold of the separator $\widetilde{\mbox{EP}}_3^{-}$ between preservation 
		and annihilation. For any orbit with initial data above (below) this manifold, its $A$ component diverges (it converges to $\widetilde{\mbox{EP}}_2^{-}$).
	\end{theorem}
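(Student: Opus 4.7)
The plan is to run a continuation argument in $\mu_2$ with $\mu_1=-\mu_1^0$ held fixed. Let $\Gamma(\mu_2)$ denote the orbit lying on the unstable manifold of $\widetilde{\mbox{EP}}_2^+$ that moves toward decreasing $v$. I would partition the parameter interval $(\tilde\mu_2,\mu_2^0]$ into the preservation set $P=\{\mu_2:\Gamma(\mu_2)\to\widetilde{\mbox{EP}}_2^-\}$ and the annihilation set $N=\{\mu_2:A(t)\to\infty \text{ along } \Gamma(\mu_2)\}$. The set $P$ is open because $\widetilde{\mbox{EP}}_2^-$ is asymptotically stable and solutions depend continuously on parameters; $N$ is open because once an orbit enters the trapping region $X$ (Lemma \ref{lem:xinv}), the same is true for nearby orbits at nearby parameters. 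The two sets are disjoint by Lemma \ref{lem:xinv}.

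For non-emptiness, I would verify $\mu_2^0\in P$ and $\mu_2\in N$ for $\mu_2$ sufficiently close to $\tilde\mu_2$. For the former, the a priori estimates of Lemma \ref{lem:vmin} keep $\Gamma(\mu_2^0)$ in the regime where Lemma \ref{lem:maxsadded} applies with bound $\widetilde A<A_{\text{ep3}}$, so $\Gamma(\mu_2^0)$ crosses $D_0$ in finite time and the argument of Theorem \ref{thm:preservation} gives convergence to $\widetilde{\mbox{EP}}_2^-$. For the latter, as $\mu_2\downarrow\tilde\mu_2$ the equilibrium $\widetilde{\mbox{EP}}_3^-$ coalesces with $\widetilde{\mbox{EP}}_2^-$ along $T_2$ and the corresponding trapping region $X$ expands (the left boundary $\partial^LX$ sweeps in toward $v=-v_{\text{ep2}}$). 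I would show quantitatively that after the rebound near $D_0$ the arc of $\Gamma(\mu_2)$ approaches $v=-v_{\text{ep2}}$ while $A$ is $O(|\bm{\mu}|^{1/2})$, which for $\mu_2$ sufficiently close to $\tilde\mu_2$ forces a crossing of $\partial^LX$, so that Lemma \ref{lem:xinv} gives $A(t)\to\infty$.

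Since $P$ and $N$ are open, disjoint, and both non-empty in the connected interval $(\tilde\mu_2,\mu_2^0]$, there is a critical $\mu_2^c\in(\tilde\mu_2,\mu_2^0)\setminus(P\cup N)$. The orbit $\Gamma(\mu_2^c)$ remains bounded (the estimates of Lemma \ref{lem:vmin} extend by continuity) and does not converge to $\widetilde{\mbox{EP}}_2^-$, hence its $\omega$-limit set is a compact invariant subset of $\partial P$. By Lemma \ref{prop:3}, $\widetilde{\mbox{EP}}_3^-$ is a saddle with eigenvalue signature $(+,-,-)$ whose one-dimensional unstable manifold has one branch converging to $\widetilde{\mbox{EP}}_2^-$ and the other entering $X$; its two-dimensional stable manifold is therefore the local separator $\Pi$ between $P$ and $N$ sketched in Fig.~\ref{fig:yz}. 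I would then identify the $\omega$-limit of $\Gamma(\mu_2^c)$ with $\{\widetilde{\mbox{EP}}_3^-\}$, so that $\Gamma(\mu_2^c)$ lies on its stable manifold. The final claim about orbits initiated above or below $\Pi$ follows immediately from the local separatrix property of $\Pi$ combined with the openness of $P$ and $N$.

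The hard part is twofold. First, the quantitative verification that $\Gamma(\mu_2)$ enters $X$ as $\mu_2\to\tilde\mu_2^+$ requires tracking the orbit through the rebound region in a regime where $\widetilde{\mbox{EP}}_3^-$ is collapsing onto $\widetilde{\mbox{EP}}_2^-$; the bounds of Lemmas \ref{lem:maxsadded} and \ref{lem:vmin} supply control on $s$ and $v$, but the delicate issue is relating the amplitude gained by $A$ during the collision (driven by the $+M_3 s$ term) to the vanishing gap $A_{\text{ep3}}\downarrow 0$ along $T_2$. Second, excluding non-trivial $\omega$-limits on $\partial P$ other than $\widetilde{\mbox{EP}}_3^-$ (limit cycles or heteroclinic chains) calls for exploiting the skew-product structure in $s$: since $\dot s$ changes sign only across the plane $v+M_1 s=0$, any bounded orbit on the separatrix must eventually settle to $s\to 0$, and the phase portrait in $\{s=0\}$ shown in Fig.~\ref{fig:hb+hb_flow} then reduces the possible $\omega$-limits to equilibria, leaving $\widetilde{\mbox{EP}}_3^-$ as the unique candidate on the basin boundary.
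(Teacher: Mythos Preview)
Your overall continuation framework is sound and matches the paper's: show preservation at $\mu_2^0$, annihilation at or near $\tilde\mu_2$, and invoke continuity to produce $\mu_2^c$. The difference is in the annihilation step, and here the paper's route is both simpler and more complete than what you propose.

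You plan to follow $\Gamma(\mu_2)$ \emph{through the rebound} and argue that for $\mu_2$ near $\tilde\mu_2$ it eventually crosses $\partial^L X$. You correctly flag this as the hard part, and you do not actually carry it out; relating the $A$-growth accumulated from the $+M_3 s$ forcing to the shrinking gap $A_{\text{ep3}}\to 0$ is delicate precisely because the bounds of Lemmas~\ref{lem:maxsadded} and~\ref{lem:vmin} only give $A=O(|\bm\mu|^{1/2})$, the same order as $A_{\text{ep3}}$. The paper avoids this entirely by working \emph{at} $\mu_2=\tilde\mu_2$ and tracking the orbit \emph{before} rebound, in the half-space $v>0$. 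Concretely, it introduces the wedge
\[
X_+=\Bigl\{(v,A,s):\ A\ge -\tfrac{M_3}{M_2}(v-v_{\text{ep2}}),\ v>0,\ s\ge 0\Bigr\},
\]
computes the unstable eigenvector of $\widetilde{\text{EP}}_2^+$ to see that the orbit enters $X_+$ immediately, and checks that $X_+$ is forward invariant (the sign computation on its slanted face uses the same $M_3-a_\infty M_2$ mechanism as for $\partial^L X$). Thus when the orbit reaches $v=0$ it already has $A\ge (M_3/M_2)\,v_{\text{ep2}}$. The remaining inequality $(M_3/M_2)\,v_{\text{ep2}}>A_X$ is exactly where assumption~(S6) is spent: both alternatives in (S6) are designed to make this comparison go through for the two possible choices of $a_\infty$. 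Hence the orbit enters $X$ through $\partial^R X$, not $\partial^L X$, and Lemma~\ref{lem:xinv} finishes the annihilation claim without any post-rebound analysis.

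On your second hard part (excluding nontrivial $\omega$-limits at $\mu_2^c$), your sketch via the $s$-monotonicity and the planar phase portrait at $s=0$ is in fact more careful than the paper, which simply appeals to continuous dependence together with Lemma~\ref{prop:3}. That extra care is welcome but not where the real work lies.
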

	\begin{proof}
		Firstly, we show that annihilation occurs when $\mu_{2} = \tilde{\mu}_{2}$. 
		Let us define $X_{+}$ by
		\begin{equation*}
			X_{+} := \left\{(v, A, s)  \left\lvert \ A\geq - \frac{M_{3}}{M_{2}}(v - v_{\text{ep2}}), v>0, s\geq 0\right. \right\}.
		\end{equation*}
		We show that  the unstable manifold, emanating from $\widetilde{\text{EP}}_{2}^{+}$, corresponding to the positive eigenvalue 
		$\lambda = 2\alpha v_{\text{ep2}}$, 
		enters $X_{+}$. 
		The Jacobian matrix of $\widetilde{\text{EP}}_{2}^{+}$ is 
		\begin{equation*}
			J := 
			\begin{pmatrix}
				-2p_{11}v_{\text{ep2}}^{2} & 0 & -M_{2}\\
				0  & 0 & M_{3}\\
				0 & 0 & 2\alpha v_{\text{ep2}}
			\end{pmatrix}
			.
		\end{equation*}
		The eigenvector $(x, y, z)^{T}$ corresponding to the unstable eigenvalue 
		$\lambda=2\alpha v_{\text{ep2}}$ is 
		$$
		(x, y, z)^{T}= \left(-\frac{M_{2}}{2p_{11}v_{\text{ep2}}^{2}+ 2\alpha v_{\text{ep2}}}, 
		\frac{M_{3}}{2\alpha v_{\text{ep2}}}, 
		1
		\right)^{T}. 
		$$
		Because $v_{\text{ep2}}>0$, $p_{11}>0$, and $\alpha>0$, it follows that 
		\begin{equation*}
			\frac{y}{x} = 
			-\frac{M_{3}}{M_{2}} \frac{2p_{11}v_{\text{ep2}}^{2}+2\alpha v_{\text{ep2}}}{2\alpha v_{\text{ep2}}}< -\frac{M_{3}}{M_{2}}. 
		\end{equation*}
		By noting that $-M_{3}/M_{2}$ is the slope of the line $A= - \frac{M_{3}}{M_{2}}(v - v_{\text{ep2}})$, 
		we find that the solution trajectory emanating from $\widetilde{\text{EP}}_{2}^{+}$ enters $X_{+}$.

		The solution trajectory does not cross the plane 
		$A= - \frac{M_{3}}{M_{2}}(v - v_{\text{ep2}})$ in $X_{+}$ because 
		\begin{align*}
			\left(\frac{M_{3}}{M_{2}}, 1, 0\right)\cdot (\dot{v}, \dot{A}, \dot{s}) 
			& = \frac{M_{3}}{M_{2}}[(-\mu_{1} - p_{11}v^{2} + p_{12} A^{2})v - M_{2} s]\\
			& \qquad + (-\mu_{2} - p_{21}v^{2} + p_{22} A^{2})A + M_{3}s\\
			&> M_{3} - \frac{M_{3}}{M_{2}}M_{2}=0,
		\end{align*}
		where we used $(-\mu_{1} - p_{11}v^{2} + p_{12} A^{2})v>0$ and $(-\mu_{2} - p_{21}v^{2} + p_{22} A^{2})>0$ 
		on the plane $A= - \frac{M_{3}}{M_{2}}(v - v_{\text{ep2}})$ in $X_{+}$. 
		
		Now, we take $a_{\infty} = \min\{\sqrt{p_{11}/p_{12}}, M_{3}/M_{2}\}$. 
		We will show that
		$$
		\frac{M_{3}}{M_{2}}v_{\text{ep2}} > A_{X},
		$$
		where $A_{X}$ was defined in \eqref{eqn:Ax} and the left side is the $A$-intercept of the line 
		$A=-(M_{3}/M_{2})(v-v_{\text{ep2}})$ (Fig.\ref{fig:mu2=tildemu2}). 
		Namely, if $\frac{M_{3}}{M_{2}}v_{\text{ep2}} > A_{X}$ is satisfied, 
		the solution trajectory emanating from $\widetilde{\text{EP}}_{2}^{+}$ 
		enters $X$, which means that annihilation occurs for $\mu_{2} = \tilde{\mu}_{2}$ from Lemma \ref{lem:xinv}. Let us proceed to prove the above inequality. At $\mu_{2} = \tilde{\mu}_{2}$, 
		\begin{align*}
			v_{X} &= \sqrt{\frac{-\mu_{1}}{p_{11}}} \cdot \frac{a_{\infty}^{2}p_{22} +p_{21}}{a_{\infty}^{2}p_{22} -p_{21}},
			\\
			A_{X} &= \sqrt{\frac{-\mu_{1}}{p_{11}}} \cdot \frac{2a_{\infty} p_{21}}{a_{\infty}^{2}p_{22} - p_{21}}. 
		\end{align*}
		When $a_{\infty} = \sqrt{p_{11}/p_{12}}$, it holds from (S6) that 
		\begin{align*}
			\frac{M_{3}}{M_{2}}v_{\text{ep2}} - A_{X} = \sqrt{\frac{-\mu_{1}}{p_{11}}}
			\left(\frac{M_{3}}{M_{2}} - \frac{2p_{21} \sqrt{p_{11}p_{12}}}{p_{11}p_{22} -p_{12}p_{21}}\right)>0.
		\end{align*}
		When $a_{\infty} = M_{3}/M_{2}$, it holds from (S6) that 
		\begin{align*}
			\frac{M_{3}}{M_{2}}v_{\text{ep2}} - A_{X} = \sqrt{\frac{-\mu_{1}}{p_{11}}}
			\frac{\frac{M_{3}}{M_{2}}}{\left(\frac{M_{3}}{M_{2}}\right)^{2}p_{22} - p_{21}}
			\left\{\left(\frac{M_{3}}{M_{2}}\right)^{2}p_{22}-3p_{21}\right\}>0,
		\end{align*}
		where we note that 
		$$
		\left(\frac{M_{3}}{M_{2}}\right)^{2}p_{22} - p_{21} >\frac{1}{p_{12}}(p_{11}p_{22} - p_{12}p_{21}) > 0, 
		$$ 
		from (S5) and (S6). 
		
		Finally, we show that there exists $\mu_{2}^{c}\in (\tilde{\mu}_{2}, \mu_{2}^{0})$ such that 
		the solution trajectory converges to the separator between preservation and annihilation at $\mu_{2}=\mu_{2}^{c}$. 
		The solution trajectory converges to $\widetilde{\text{EP}}_{2}^{-}$ 
		at $\mu_{2}= \mu_{2}^{0}$ by definition (preservation), while at the same time, it follows from the above arguments that the solution 
		trajectory enters $X$ when $\mu_{2}= \tilde\mu_{2}$. Then, we conclude from the continuous dependency of the trajectory on the parameters that there exists at least one $\mu_{2}^{c}\in (\tilde{\mu}_{2}, \mu_{2}^{0})$ such that 
		the solution lies on the stable manifold of $\widetilde{\text{EP}}_{3}^{+}$ at some $\mu_{2} = \mu_{2}^{c}$ (Fig.\ref{fig:z}).  
		In other words, the stable manifold of $\widetilde{\text{EP}}_{3}^{+}$ is 
		a separator between preservation and annihilation. 
	\end{proof}
	
	\begin{remark}
		The uniqueness of the critical parameter $\mu_{2}^{c}$ is not known in Theorem \ref{thm:separator}; however, if it holds and the transversal property is satisfied at $\mu_{2}^{c}$, then preservation and annihilation regimes are completely separated by $\mu_{2}^{c}$. 
	\end{remark}

	\begin{figure}[htbp]
		\centering
		\includegraphics[width=8cm,clip]{./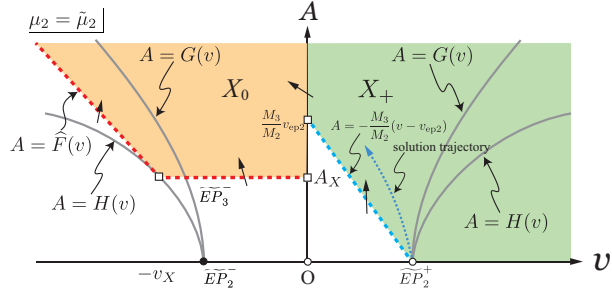}
		\caption{Schematic figure of the solution trajectory when $\mu_{2} = \tilde{\mu}_{2}$. }
		\label{fig:mu2=tildemu2}
	\end{figure}
	
	\begin{figure}[htbp]
		\centering
		\includegraphics[width=6cm,clip]{./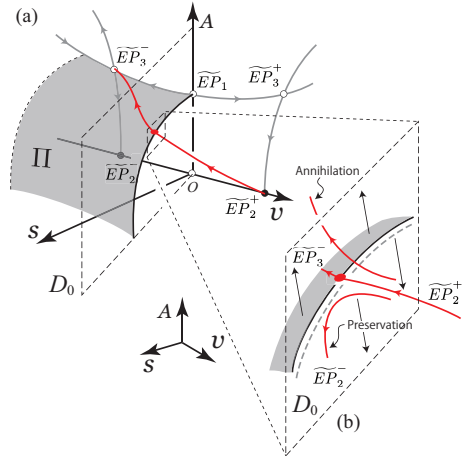}
		\caption{
			(a) Schematic picture of a solution trajectory coming from $\widetilde{\mbox{EP}}_2^+$
			in $(v,A, s)$ space at the critical value $\mu_2^c$. 
			The solution trajectory goes towards $\widetilde{\mbox{EP}}_3^-$ 
			along the gray colored surface of $\Pi$. 
			(b) Depending on which side of $\Pi$ each orbit, 
			the orbits are sorted into the annihilation or preservation
			behaviors for $\mu_2 < \mu_2^c$ or $\mu_2 > \mu_2^c$, respectively. 
		}
		\label{fig:z}
	\end{figure}

	\subsection{Comparison with PDE dynamics}\label{sec:comparison}
	Let us confirm numerically that the reduced system (\ref{eqn:ode-2pulse-p}) gives us qualitatively the same phase diagram as that of the PDE model (see Fig.\ref{fig:pd-ode}(a)). 
	In numerical simulations, the initial data $(v(0),A(0), s(0))$ is taken near $\widetilde{\mbox{EP}}_2^+$ if it
	exists and is taken near $(0,0,0)$ if it does not (see Appendix A for details).
	Recall that the criterion for the occurrence of ``annihilation'' is that the component $A$ diverges, and ``preservation'' means that the solution converges 
	to $\widetilde{\mbox{EP}}_2^-$. The background state shows the
	non-existence of both standing and traveling pulses. 
	The ODE phase diagram Fig.\ref{fig:pd-ode}(b) is obtained based on this classification. Fig.\ref{fig:numerics2} shows typical orbital behaviors depending on the parameters. It is clear from previous discussions that the Hopf and drift lines separate background, standing, and preservation regions counterclockwise like I $\rightarrow$ II $\rightarrow$ III, similar to the PDE diagram.  
	The transition from preservation to annihilation occurs as $\mu_{1}$ is decreased and the lower boundary of the annihilation regime is $T_2$. The most subtle issue is to examine whether the tongue of the annihilation regime touches the DH point or not, which is unclear due to the limitation of numerical discretization.
	
	The following analytical result helps us to resolve the issue, which is obtained directly from Theorem \ref{thm:separator}. 
	
	\begin{corollary}\label{cor:weak}
		Let assumptions (S1)--(S6) hold.  Then, we have $\mu_2^{c}\to 0$ as $\mu_{1}^{0}\to 0$.
	\end{corollary}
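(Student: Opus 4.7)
The strategy is to sandwich $\mu_2^c$ between two quantities that both tend to zero with $\mu_1^0$, using Theorems \ref{thm:preservation} and \ref{thm:separator} as the two anchors of the bracketing argument. Recall from Theorem \ref{thm:separator} that $\mu_2^c$ lies in $(\tilde{\mu}_{2}, \mu_{2}^{0})$, where $\tilde\mu_2 = p_{21}\mu_1^0/p_{11}$ is the ordinate of $T_2$ at $\mu_1=-\mu_1^0$ and $\mu_2^0$ is any value of $\mu_2$ at which preservation holds. The lower bound $\tilde\mu_2$ already tends to $0$ linearly in $\mu_1^0$, so the remaining task is to show that the upper bound $\mu_2^0$ can also be chosen arbitrarily small once $\mu_1^0$ is small enough.

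First, I would fix an arbitrary $\varepsilon>0$ and apply Theorem \ref{thm:preservation} with the role of $\mu_2^0$ played by $\varepsilon$: this produces a threshold $\mu_1^\star(\varepsilon)>0$ such that for all $\mu_1^0<\mu_1^\star(\varepsilon)$ the orbit starting from $\widetilde{\mathrm{EP}}_2^+$ converges to $\widetilde{\mathrm{EP}}_2^-$ at the parameter $(\mu_1,\mu_2)=(-\mu_1^0,\varepsilon)$. Second, I would observe (as in the first part of the proof of Theorem \ref{thm:separator}) that at $\mu_2=\tilde\mu_2$ the orbit enters the invariant set $X$ and hence annihilates; so for $\mu_1^0$ small enough that $\tilde\mu_2=p_{21}\mu_1^0/p_{11}<\varepsilon$, we have preservation at $\mu_2=\varepsilon$ and annihilation at $\mu_2=\tilde\mu_2$. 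Invoking the continuous dependence argument already used in Theorem \ref{thm:separator}, there must exist a critical value $\mu_2^c\in(\tilde\mu_2,\varepsilon)$, so in particular $\mu_2^c<\varepsilon$. Since $\varepsilon>0$ was arbitrary, this yields $\mu_2^c\to 0$ as $\mu_1^0\to 0$.

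The delicate point, which I expect to be the main obstacle, is to verify that the threshold $\mu_1^\star(\varepsilon)$ produced by Theorem \ref{thm:preservation} remains strictly positive as $\varepsilon\downarrow 0$. Theorem \ref{thm:preservation} relies on the center-manifold expression \eqref{eqn:cmf}, whose coefficients contain negative powers of $\mu_2$, together with the comparison $A=O(|\mu_1|)\ll A_{\mathrm{ep3}}=O(|\boldsymbol\mu|^{1/2})$ used at the end of its proof. Both of these estimates degenerate as $\mu_2\to 0$, which is why $\mu_1^\star(\varepsilon)$ must in general depend on $\varepsilon$; tracking the powers of $\mu_2$ through Lemmas \ref{lem:sstar}--\ref{lem:Acmf} shows that it suffices to take $\mu_1^0$ much smaller than $\varepsilon^2$ (so that $|\mu_1|\ll\mu_2^{1/2}$), which keeps the center-manifold approximation and the bound $A(t)<A_{\mathrm{ep3}}$ valid for every fixed $\varepsilon>0$. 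Once this quantitative refinement of Theorem \ref{thm:preservation} is made explicit, the bracketing $\tilde\mu_2<\mu_2^c<\varepsilon$ closes the argument immediately.
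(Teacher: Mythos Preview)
Your approach is correct and is exactly the paper's argument: since Theorem~\ref{thm:preservation} guarantees preservation at $(-\mu_1^0,\varepsilon)$ once $\mu_1^0$ is small enough, Theorem~\ref{thm:separator} applied with $\mu_2^0=\varepsilon$ forces $\mu_2^c<\varepsilon$. The ``delicate point'' you raise is a non-issue: the $\varepsilon$--$\delta$ statement only requires that $\mu_1^\star(\varepsilon)>0$ for each \emph{fixed} $\varepsilon>0$, which is precisely what Theorem~\ref{thm:preservation} asserts, so the quantitative tracking of powers of $\mu_2$ through the center-manifold coefficients is unnecessary (and note a minor sign slip: $\tilde\mu_2=-p_{21}\mu_1^0/p_{11}<0$, which only helps).
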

	\begin{proof} Because the preservation dynamics is observed for $(\mu_{1}, \mu_{2}) =(\mu_{1}^{0}, \mu_{2}^{0})$ ($\mu_{1}^{0}<0, \mu_{2}^{0}>0$) 
		arbitrarily close to the origin, 
		the critical $\mu_{2}^{c}$ can be taken arbitrarily close to $0$ from Theorem \ref{thm:separator}. 
	\end{proof}

	\begin{figure}[htbp]
		\centering
		\includegraphics[width=11cm,clip]{./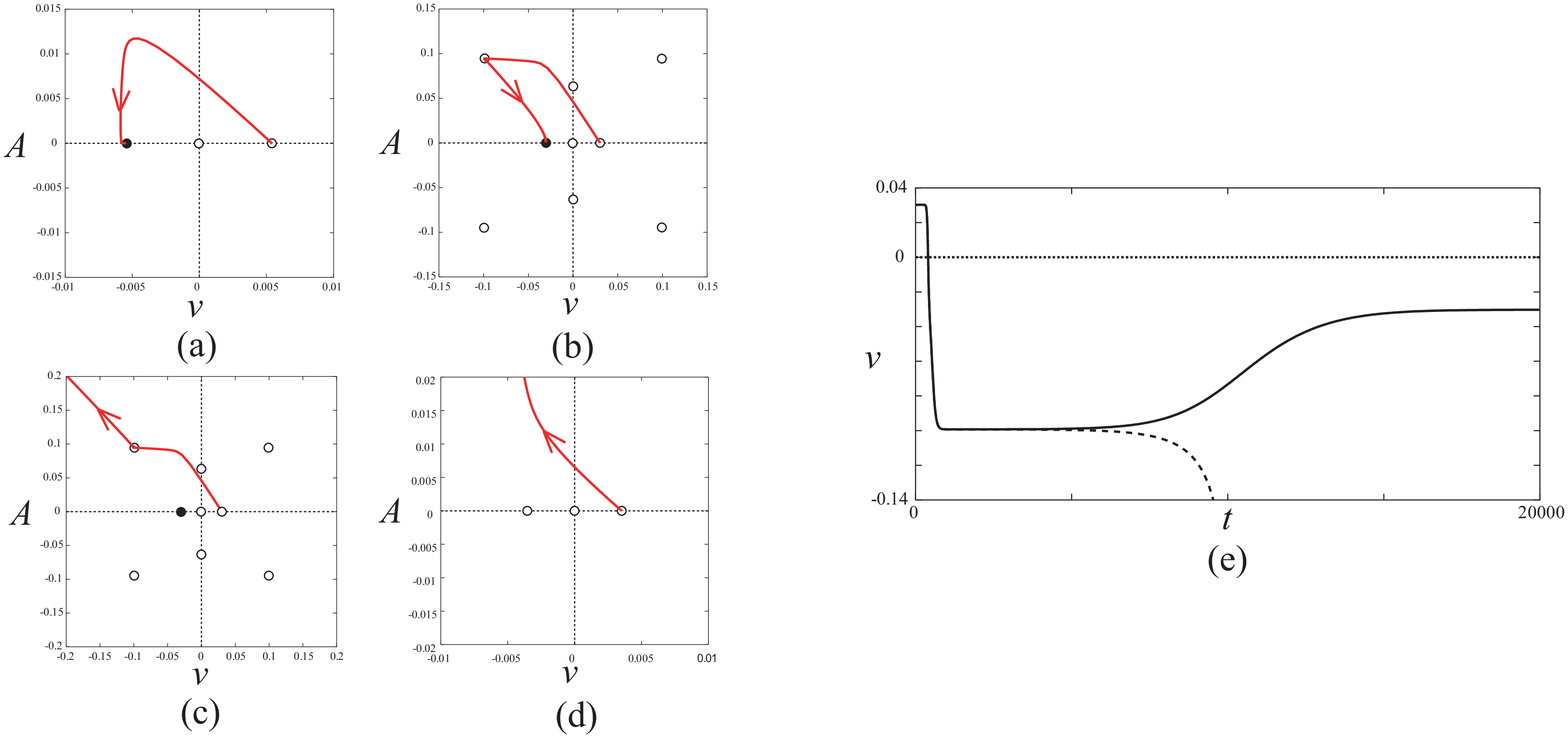}
		\caption{
			(a)-(d) Orbit of (\ref{eqn:ode-2pulse-p}) projected to the $v$-$A$
			plane for $h(0)=500$. 
			The parameters $p_{ij}$ are the same as those given in Fig.\ref{fig:pd-ode}. 
			Filled and open circles indicate 
			stable and unstable equilibrium points, respectively. 
			The parameter values are changed by taking control parameter
			$\theta$ with 
			$(\mu_1,\mu_2) = (0.001 \cos\theta, 0.001 \sin\theta)$.
			(a) $\theta=1.8$ [rad] [region (iii)]. 
			(b) $\theta=2.7283$ [rad] [region (iii)]. 
			(c) $\theta=2.7284$ [rad] [region (iii)]. 
			(d)$\theta=4.7$ [rad] [region (iv)]. 
			(e) $t$-$v$ plot of (b) (solid line)  and (c) (dashed line). 
		}
		\label{fig:numerics2}
	\end{figure}
	
	Theorem \ref{thm:separator} indicates that the solution trajectory approaches UTB at $\mu_{2}^{c}$ after weakly symmetric collision. 
	In addition, from Corollary \ref{cor:weak}, the tip of the annihilation region touches the DH point, and WIIA occurs for arbitrarily slow and weak collisions. 
	This suggests that annihilation occurs for any arbitrary slowly colliding pulses of the PDE system (\ref{eqn:gd1}), which is not an easy task to realize by numerical simulations. However, if we choose a set of parameter values carefully in the annihilation regime (but close to the border between annihilation and preservation), we observe the behavior depicted in Fig.\ref{fig:birdview-codim3}. 
	In fact, it does not look like a collision because they are well-separated even at the nearest point. They rebound after that and start to oscillate up and down before collapsing to the background state. The last stage indicates that the PDE solution actually behaves like the separator UTB.
	
	At first sight, this looks contradictory to the results in
	\cite{ei-mimura-nagayama}, in which they showed that preservation occurs when two pulses collide weakly enough. However, in our case, 
	the criterion of the fate of the dynamics
	is not strength of interaction but the size of 
	basin of attraction of preservation, which is essentially a
	different viewpoint from that in \cite{ei-mimura-nagayama}. 
	In fact, as shown in Fig.\ref{fig:birdview-codim3}(b), the minimum distance of two pulses in the annihilation regime is 
	larger than that for preservation, which suggests that the strength of 
	interaction is not a main factor causing annihilation.
	
	Overall, both the PDE and reduced ODEs present qualitatively the same phase diagrams, as depicted in Fig.\ref{fig:pd-ode}. The reduced ODEs clearly show the location of transitions and how they happen there; in particular, the transition from preservation to annihilation is highlighted in the previous section. 
	
	\section{Conclusion and outlook}
	
	\begin{itemize}
		
		\item
		{\bf{Codimension 3 singularity}}:\\
		We considered the hidden instability of DH type in this paper. It is a combination of drift and Hopf instabilities. The meaning of ''hidden" is that it is not visible when the pattern is isolated. It comes out when the pattern starts to interact with other traveling patterns or defects. A nontrivial characteristic is that even if they interact very weakly, they experience a large deformation and annihilate each other provided that all the parameters are chosen close to the DH point. In a sense, the pattern is very fragile when it includes such instabilities inside. There is one thing we have to pay slightly more attention to understand the dynamics of annihilation: the existence of a saddle-node bifurcation of the standing pulse branch, as shown in Fig.\ref{fig:codim3-bifdiag-gam8.0}(a)(b). In fact, annihilation heavily depends on the saddle-node point that determines the destinations of the unstable manifolds of UTP and UTB (i.e., homogeneous background state). In this paper, we assumed that the destinations of UTP and UTB are the background state and did not consider the effect explicitly coming from the saddle-node bifurcation. It would be reasonable to consider the annihilation in the framework of codimension 3 singularity, i.e., DH point + saddle--node bifurcation; however, we leave this as a future work.\\
		
		\item 
		{\bf{Global structure of PDE branches}}:\\
		It is an interesting future task to explore the global behaviors of PDE solution branches as schematically depicted in Fig.\ref{fig:codim3-bifdiag-gam8.0}(c)(d). In particular, unstable traveling breathers UTB play a key role in understanding the transition from preservation to annihilation; therefore, more precise bifurcation structure of secondary branches may help us to explore the behavior of the basin structure of traveling pulses.\\
		
		\item 
		{\bf{2-dimensional spot annihilation}}:\\
		We showed the annihilation of 2-dimensional spots in Section 1 for the two representative three-component reaction diffusion systems. It is possible to derive a similar reduced finite-dimensional system associated with the 2-dimensional case, as we did in Section 3. We believe that the same mechanism of annihilation as the 1-dimensional case works also for the 2-dimensional case; however, more careful analysis is needed due to the increase of unknowns, this remains as a future work. \\
		
		\item 
		{\bf{Oblique collision and coherent motion of many traveling spots}}:\\
		The search for a coherent motion of many traveling spots is an interesting open question.
		It is numerically observed that if each traveling spot is robust (i.e., not close to hidden instabilities), then repulsive interaction drives them to the marching mode (i.e., travel in the same direction with same velocity) in a bounded domain with periodic boundary condition. However, for the spots close to DH singularity, they may annihilate upon collision when they are close to a head-on collision. It is not known how the angle between two traveling spots affect the outcome after collision, what is the asymptotic behavior as time goes on, and how the final state depends on the density of spots.\\
		
		\item
		{\bf{Heterogeneous media}}
		Defects or heterogeneities in the media is another type of external perturbation to propagating spots and pulses, which may trigger pinning to the defects, as was shown in \cite{heij-nishiura-2011,nishi-nishiura-teramoto-2018,heij-nishiura-2019,heij-teramoto-2020}. Suppose a propagating pulse or spot has a hidden instability of DH type; then, it is natural to observe a similar annihilation after collision against a defect. It is confirmed numerically that a 1-dimensional traveling pulse experiences a similar annihilation to that depicted in Fig.\ref{fig:birdview-codim3}. For higher-dimensional cases, we conjecture that the same type of annihilation occurs, but the details are left as a future work.

	\end{itemize}

\appendix
\section{Phase diagram for the reduced ODE}
In the numerical simulations for \eqref{eqn:ode-2pulse-p}, 
the initial data $(s(0),v(0),A(0))$ is taken near $\widetilde{EP}_2^+$ if 
exists and is taken near $(0,0,0)$ if does not:
\begin{equation*}
 (v(0), A(0), s(0)) = \left\{
\begin{aligned}
 &(\sqrt{-\mu_1/p_{11}}+0.001, 0.001, \exp(-50)),\quad \text{for}\quad \mu_1<0,\\
 &(0.001, 0.001, \exp(-50)), \qquad\qquad\qquad\qquad \text{otherwise}.
\end{aligned}
\right.
\end{equation*}
Figure \ref{fig:pd-ode}(b) shows the phase diagram. 
Here, in convenience,  we suppose the criterion for the occurrence of ``annihilation'' is that 
$\lvert A\rvert$  reaches $1$. When the solution $(v,A,s)$ converges to $\widetilde{\text{EP}}_0$, 
$\widetilde{EP}_2^-$, 
the dynamics is called ``standing'' and ``preservation", respectively. 
The region `` background'' corresponds to the parameter region where no stable solution exists. 

\section{Proof of Proposition \ref{thm:ode-2pulse} and Theorem \ref{cor:1}}\label{appen:A}
\subsection{Proof of Proposition \ref{thm:ode-2pulse}}\label{appen:B1}
Let 
$\bmV:=\sum_{j=1,2}S_j(x)+q_j\psi_j(x)+(r_j\xi_j(x)+\text{c.c.})+\zeta_j(x)$. 
Since $\Xi'(p_1)=-\Xi(p_1)(\partial/\partial x)$, by substituting
(\ref{eqn:1}) into the left hand of (\ref{eqn:0}), we have
\begin{equation}\label{eqn:lhs}
\begin{aligned}
 (I+E_{n,n}(\eta_2))\bmu_t &=
  (I+E_{n,n}(\eta_2)) \biggl[\dot{p}_1\Xi'(p_1)(\bmV+\Theta(h)\bmw)\\
&\quad +\Xi(p_1)
\left(\frac{\partial}{\partial(h,\bmq,\bmr)}(\bmV+\Theta(h)\bmw)(\dot{h},\dot{\bmq},\dot{\bmr})
 +\Theta(h)\bmw_t\right)\biggr]\\
 &= 
  \Xi(p_1)\biggl[(I+E_{n,n}(\eta_2))\biggl(
 -\dot{p}_1\frac{\partial}{\partial x}(\bmV+\Theta(h)\bmw)\\
&\quad +\frac{\partial}{\partial(h,\bmq,\bmr)}(\bmV+\Theta(h)\bmw)(\dot{h},\dot{\bmq},\dot{\bmr})
 +\Theta(h)\bmw_t\biggr)\biggr].
\end{aligned}
\end{equation}

By expanding the right hand side of (\ref{eqn:lhs}), we have
\begin{equation*}
 \begin{aligned}
 &(I+E_{n,n}(\eta_2))\bmu_t =\Xi(p_1)(I+E_{n,n}(\eta_2))\\
&\biggl\{
  \sum_{j=1,2}\bigl[ -\dot{p}_j(\partial_x S_j + q_j\partial_x\psi_j+(r_j\partial_x\xi_j+\text{c.c.} +
  \partial_x \zeta_j + \Theta(h)\bmw_x)) \\
  &\quad + \dot{q}_j (\psi_j+\partial_q \zeta_j) 
  +
  (\dot{r}_j(\xi_j+\partial_r \zeta_j) +\text{c.c.})\bigr] + \Theta(h)\bmw_t
\biggr\}.
\end{aligned}
\end{equation*}

On the other hand, by substituting
(\ref{eqn:1}) into the right hand of (\ref{eqn:0}), we have
\begin{equation}\label{eqn:rhs}
 \begin{aligned}
 {\cal L}(\bmu)+\eta_1 g(\bmu)&= \Xi(p_1)[{\cal L}(\bmV+\Theta(h)\bmw)+\eta_1g(\bmV+\Theta(h)\bmw)] \\
&
 =\Xi(p_1)[\sum_{j=1,2}({\cal L}'(S_j)\bmw-q_j\partial_x S_j+(i\omega r_j\xi_j+\text{c.c.})
  +q_j^2(\alpha_{2000}\xi_j-\Pi_{2000})\\
 & +(r_j^2(\alpha_{0200}\xi_j-\Pi_{0200})
 +q_jr_j(\alpha_{1100}\psi_j+\alpha'_{1100}\partial_x S_j-\Pi_{1100})+\text{c.c.})\\
 & +\lvert r_j\rvert^2(\alpha_{0110}\xi_j-\Pi_{0110})
 +\eta_1(\alpha_{0001}\xi_j-\Pi_{0001}))\\
 & 
  +q_j^3({\cal F}''(S_j)\psi_j\cdot\zeta_{2000,j}+\frac16{\cal F}'''(S_j)(\psi_j)^3) 
+q_j\lvert r_j\rvert^2({\cal F}''(S_j)\xi_j\cdot\zeta_{1010,j} \\
  & +{\cal F}''(S_j)\bar\xi_j\cdot\zeta_{1100,j}+
{\cal F}'''(S_j)\psi_j\cdot\xi_j\cdot\bar{\xi_j})\\
&
  +\eta_1g_1(S_j)
  +q_j\eta_1({\cal F}''(S_j)\psi_j\cdot\zeta_{0001,j}+g_1'(S_j)\psi_j)\\
&
  +\{q_j^2r_j(\frac12{\cal F}'''(S_j)(\psi_j)^2\cdot\xi_j+{\cal F}''(S_j)\psi_j\cdot\zeta_{1100,j}
+{\cal F}''(S_j)\xi_j\cdot\zeta_{2000,j})\\
  &
  +r_j^3({\cal F}''(S_j)\xi_j\cdot\zeta_{0200,j}
  +\frac16{\cal F}'''(S_j)(\xi_j)^3) 
  +q_jr_j^2({\cal F}''(S_j)\psi_j\cdot\zeta_{0200,j}\\
&+{\cal F}''(S_j)\xi_j\cdot\zeta_{1100,j}
+\frac12{\cal F}'''(S_j)\psi_j\cdot(\xi_j)^2)\\
  &
  +r_f\lvert r_j\rvert ^2({\cal F}''(S_j)\xi_j\cdot\zeta_{0110,j}+{\cal F}''(S_j)\bar\xi_j\cdot\zeta_{0200,j}+
\frac12{\cal F}'''(S_j)(\xi_j)^2\cdot\bar{\xi_j})\\
 & 
  +r_j\eta_1(g_1'(S_j)\xi_j+{\cal F}''(S_j)\xi_j\cdot\zeta_{0001,j})+\text{c.c.}\}\\
 &
 + q_j{\cal F}''(S_j)\xi_j\cdot\bmw + (r_j{\cal F}''(S_j)\xi_j\cdot\bmw+\text{c.c.})
 +O(\lvert \eta\rvert^{2} + \lvert \bmq\rvert ^{4} + \lvert\bmr\rvert^{4})].\\
 \end{aligned}
\end{equation}

By operating $Q$ on (\ref{eqn:lhs}) and (\ref{eqn:rhs}), we obtain the
following estimation:
\begin{align*}
 \dot{p}_j&= O(\lvert\bmq\rvert(1+\lvert\bmr\rvert+\lvert\bmeta\rvert)+\delta),\\
 \dot{q}_j&= O(\lvert\bmq\rvert(\lvert \bmr\rvert+\lvert\bmeta\rvert)+\delta),\\
 \dot{r}_j &= i\omega r_j + O(\lvert\bmq\rvert^2+\lvert\bmr\rvert^2+\lvert\bmeta\rvert), \\
\end{align*}
where we have used $\langle {\cal F}''(S_j)\psi_j\cdot\bmw, \phi^*\rangle_{L^2}$, 
$\langle {\cal F}''(S_j)\xi_j\cdot\bmw, \xi^*\rangle_{L^2}$, 
$\langle \bmw_x, \phi^*\rangle_{L^2}$ and $\langle \bmw_x, \xi^*\rangle_{L^2}$ are
  $O(\lvert\bmq\rvert^3+\lvert\bmr\rvert^3 +\lvert\bmeta\rvert^{3/2})$, which are derived in a similar way 
 those given in the proof of Theorem 4.1 in \cite{ei-mimura-nagayama}. 
By operating $Q$ on (\ref{eqn:lhs}) and (\ref{eqn:rhs}) again with the 
estimations above, we obtain (\ref{eqn:ode-final}).

\subsection{Proof of Theorem \ref{cor:1}}\label{sec:appenA2}
In order to vanish quadratic terms with respect to $q_j$ and $r_j$, 
we perform a smooth invertible transformation:
\begin{equation}\label{eqn:henkan1}
 \begin{aligned}
 v_j &= q_j + V_{1100}q_j r_j
  +V_{1010}q_j\bar{r}_j+V_{1200}q_jr_j^2
 +V_{1020}q\bar{r}^2,\\
 w_j &= r_j + W_{0001}\eta_1+ W_{2000}q_j^2
 + W_{0200}r_j^2 +W_{0020}\bar{r}_j^2
  +W_{0110}\lvert r_j\rvert^2\\
 &+W_{0011}\eta_1\bar{r}_j+
   W_{2010}q_j^2\bar{r}_j
 +W_{0120}\bar{r}_j\lvert r_j\rvert ^2+W_{0300}r_j^3+W_{0030}\bar{r}_j^3,
  \end{aligned}
\end{equation}
where 
\begin{equation*}
\begin{aligned}
 &
 V_{1100} = -\frac{g_{1100}}{i\omega},
 V_{1010}=\frac{g_{1010}}{i\omega},\\
 &
  V_{1200} =
 -\frac{g_{1200}+g_{1100}V_{1100}+h_{0200}V_{1100}+h_{0200}V_{1010}}{2i\omega},\\
 & 
  V_{1020} =
 \frac{g_{1020}+g_{1010}V_{1010}+h_{0020}V_{1100}+h_{0020}V_{1010}}{2i\omega},\\
 &
  W_{0001}= \frac{h_{0001}}{i\omega},\\
 &
  W_{0011}= \frac{h_{0011}+2W_{0020}\bar
 h_{0001}+W_{0110}h_{0001}}{2i\omega},\\ 
 & 
  W_{2000} =\frac{h_{2000}}{i\omega},
  W_{0200}=-\frac{h_{0200}}{i\omega},
  W_{0020} = \frac{h_{0020}}{3i\omega},
  W_{0110}=\frac{h_{0110}}{i\omega},\\
&
  W_{2010}=\frac{h_{2010}+2W_{2000}g_{1010}+2W_{0020}h_{2000}
  +W_{0110}{h_{2000}}}{2i\omega},\\
 &
  W_{0120}= \frac{h_{0120}+2W_{0200}h_{0020}+2W_{0020}\bar h_{0110}
  +W_{0110}h_{0110}+W_{0110}h_{0020}}{2i\omega},\\
 &
  W_{0300} =
  -\frac{h_{0300}+2W_{0200}h_{0200}+W_{0110}h_{0200}}{2i\omega},\\
 &
  W_{0030} = \frac{h_{0030}+2W_{0020}h_{0020}+W_{0110}h_{0020}}{4i\omega}.
\end{aligned}
\end{equation*}
Differetiating both side of (\ref{eqn:henkan1}) and substituting into
(\ref{eqn:ode-final}), we have 
\begin{equation*}
 \begin{aligned}
 \dot{v}_j&=G_{1001}\eta_1 v+G_{1002}\eta_2 v_j+
G_{3000}v_j^3+G_{1110}v_j\lvert w_j\rvert^2 + (-1)^{j} M_2 s + \text{h.o.t.}\\
 \dot{w}_j&= i\omega w_j+H_{0101} \eta_1 w_j
+H_{2100} v_j^2w_j
  +H_{0210}w_j\lvert w_j\rvert^2+ M_3 s+\text{h.o.t.}
  \end{aligned}
\end{equation*}
where
\begin{equation*}
 \begin{aligned}
 &
  G_{1001}=g_{1001}+V_{1100}h_{0001}+V_{1010}\bar h_{0001},\\
 &
  G_{1002}=g_{1002},\\
 &
  G_{3000}=g_{3000}+V_{1100}h_{2000}+V_{1010}\bar h_{2000},\\
 &
  G_{1110}=g_{1110}+V_{1100}g_{1010}+V_{1100} h_{0110}
  +V_{1010}g_{1100}+V_{1010}\bar{h}_{0110},\\
 &
  H_{0101}=h_{0101}+2W_{0200}h_{0001}+W_{0110}\bar h_{0001},\\
 &
  H_{2100}=h_{2100}+2W_{2000}g_{1100}
  +2W_{0200}h_{2000}
 +W_{0110}\bar h_{2000},\\
 &
  H_{0210}=h_{0210}+2W_{0200}h_{0110}
  +2W_{0020} h_{0200}
  +W_{0110}h_{0200}
  +W_{0110}\bar h_{0110}.\\
    \end{aligned}
\end{equation*}

In order to derive an amplitude equation (\ref{eqn:ode-2pulse}), 
we change $w_j$ as  $w_j = A_je^{i\varphi_j} (A_j, \varphi_j\in \mathbb{R})$.

\section{The derivation of (\ref{eqn:cmf})}  \label{sec:appendc}
We consider the following equation around $\mu_{1} = 0$: 
\begin{equation}\label{eqn:odeappend}
 \begin{aligned}
 \dot{s} &= 2\alpha  (v+M_1s)s,\\
 \dot{v} &= (-\mu_1-p_{11}v^2+p_{12}A^2)v
  -M_2s, \\
 \dot{A} &= (-\mu_2-p_{21}v^2+p_{22}A^2)A
  +M_3s.
 \end{aligned}
\end{equation}
The linearized matrix of (\ref{eqn:odeappend}) is 
\begin{equation*}
\begin{pmatrix}
 0 & 0 & 0 \\
 -M_2 & 0 & 0 \\
 M_3 & 0 & -\mu_2
\end{pmatrix}
\end{equation*}
and the eigenvalues are $\lambda = 0, 0, -\mu_2$. 
The eigenvector corresponding to  $\lambda=0$ is given by the 
following equation:
\begin{equation*}
\begin{pmatrix}
 0 & 0 & 0 \\
 -M_2 & 0 & 0 \\
 M_3 & 0 & -\mu_2
\end{pmatrix}
\begin{pmatrix}
 x \\
 y \\
 z
\end{pmatrix}
=
\begin{pmatrix}
 0 \\
 0 \\
 0
\end{pmatrix}
.
\end{equation*}
Thus, we have $(x,y,z) = (0, M_2\mu_2,0)$. 
Another eigenvalue for $\lambda=0$ is given by the following equation:
\begin{equation*}
\begin{pmatrix}
 0 & 0 & 0 \\
 -M_2 & 0 & 0 \\
 M_3 & 0 & -\mu_2
\end{pmatrix}
\begin{pmatrix}
 x \\
 y \\
 z
\end{pmatrix}
=-
\begin{pmatrix}
 0 \\
 M_2\mu_2 \\
 0
\end{pmatrix}
.
\end{equation*}
Thus, we have $(x,y,z) = (\mu_2, 0, M_3)$. 
The eigenvector corresponding to $\lambda=-\mu_2$ is given by 
\begin{equation*}
\begin{pmatrix}
 -\mu_2 & 0 & 0 \\
 -M_2 & -\mu_2 & 0 \\
 M_3 & 0 & 0
\end{pmatrix}
\begin{pmatrix}
 x \\
 y \\
 z
\end{pmatrix}
=-
\begin{pmatrix}
 0 \\
 0 \\
 0
\end{pmatrix}
.
\end{equation*}
Thus we have $(x,y,z) = (0, 0, 1)$. Therefore the transformation matrix
$P$ is given by 
\begin{equation*}
P=
\begin{pmatrix}
 0 & \mu_2 & 0 \\
 M_2\mu_2 & 0 & 0 \\
 0 & M_3 & 1
\end{pmatrix}
,\qquad P^{-1}=
\begin{pmatrix}
 0 & \frac{1}{M_2\mu_2} & 0 \\
 \frac{1}{\mu_2} & 0 & 0 \\
 -\frac{M_3}{\mu_2} & 0 & 1
\end{pmatrix}
.
\end{equation*}
Therefore it holds that 
\begin{equation*}
\begin{pmatrix}
 0 & \frac{1}{M_2\mu_2} & 0 \\
\frac{1}{\mu_2} & 0 & 0 \\
 -\frac{M_3}{\mu_2} & 0 & 1
\end{pmatrix}
\begin{pmatrix}
 0 & 0 & 0 \\
 -M_2 & 0 & 0 \\
 M_3 & 0 & -\mu_2
\end{pmatrix}
\begin{pmatrix}
 0 & \mu_2 & 0 \\
 M_2\mu_2 & 0 & 0 \\
 0 & M_3 & 1
\end{pmatrix}
=
\begin{pmatrix}
 0 & -1 & 0 \\
 0 & 0 & 0 \\
 0 & 0 & -\mu_2
\end{pmatrix}
.
\end{equation*}
Here we take
\begin{equation*}
P^{-1}
\begin{pmatrix}
 s \\
 v\\
 A\\
\end{pmatrix}
=
\begin{pmatrix}
x\\
y\\
z 
\end{pmatrix}
.
\end{equation*}
That is, $s=\mu_2y$, $v=M_2\mu_2x$, $A=M_3y+z$. Substituting these
into (\ref{eqn:ode-2pulse-p}), we have
\begin{equation*}
 \begin{aligned}
  &\dot{x} = -y - \mu_1 x + O(\lvert x^3\rvert+\lvert xy^2\rvert+\lvert xyz\rvert+\lvert xz^2\rvert),\\
  &\dot{y}= 2\alpha M_2\mu_2 xy,\\
  &\dot{z} = -\mu_2 z - 2\alpha M_2M_3\mu_2 x y 
  + O(\lvert x^2y\rvert+\lvert x^2z\rvert +\lvert y^3\rvert+\lvert y^2z\rvert+\lvert yz^2\rvert+\lvert z\rvert^3). 
\end{aligned}
\end{equation*}
In order to apply the center manifold theory, we take $\mu_1$ as a
function of $t$ and consider the follwoing equations 
\begin{equation}\label{eqn:center1}
 \begin{aligned}
  &\dot{x} = -y - \mu_1 x + O(\lvert x^3\rvert +\lvert xy^2\rvert+\lvert xyz\rvert +\lvert xz^2\rvert),\\
  &\dot{y}= 2\alpha M_2\mu_2 xy,\\
  &\dot{z} = -\mu_2 z - 2\alpha M_2M_3\mu_2 x y 
  + O(\lvert x^2y\rvert+\lvert x^2z\rvert + \lvert y^3\rvert+\lvert y^2z\rvert+\lvert yz^2\lvert +\lvert z\rvert^3),\\
  &\dot{\mu}_1= 0.
\end{aligned}
\end{equation}
Since $\mu_2>0$, the variable $z(t)$ on the center manifold can be
expressed by the quadratic equation:
\begin{equation}\label{eqn:anzatsz}
z=b_1x^2+b_2 y^2+b_3 \mu_1^2+b_4 xy+b_5\mu_1y 
+ b_6\mu_1x, 
\end{equation}
where $b_j\in \mathbb{R}$. By differentiate both sides of
(\ref{eqn:anzatsz}) using (\ref{eqn:center1}), we have 
\begin{equation}\label{eqn:zhikaku1}
 \dot{z} = -2b_1xy-b_4 y^2-b_6 \mu_1 y
\end{equation}
On the other hand, putting (\ref{eqn:anzatsz}) in the $z$ equation 
of (\ref{eqn:center1}), whe have 
\begin{equation}\label{eqn:zhikaku2}
 \dot{z}= -\mu_2(b_1x^2+b_2 y^2+b_3 \mu_1^2+b_4 xy+b_5\mu_1y)-2\alpha
  M_2M_3\mu_2 xy
\end{equation}
Comparing the second order terms between (\ref{eqn:zhikaku1}) and
(\ref{eqn:zhikaku2}), we have $b_1=b_3=b_5=b_6=0$, $b_2=-2\alpha M_2M_3/\mu_2$, 
$b_4=-2\alpha M_2M_3$. 
Therefore, 
\begin{equation*}
 z = -\frac{2\alpha M_2M_3}{\mu_2}y^2-2\alpha M_2M_3 xy. 
\end{equation*}
Thus, we obtain 
\begin{equation*}
 A = -\frac{2\alpha M_2M_3}{\mu_2^3}s^2 + \frac{M_3}{\mu_2}
  s-\frac{2\alpha M_3}{\mu_2^2} vs. 
\end{equation*}

	\section*{acknowledgment}
This work was supported by KAKENHI 20K20341, 22H05675, 20K03730, 
	and 23K03209. 

\end{document}